\documentclass[smallextended,referee,envcountsect]{svjour3} 
 
\smartqed 

\usepackage{graphicx}

\journalname{}

\usepackage{amsmath}
\usepackage{amsfonts}
\usepackage{mathrsfs}
\usepackage{enumitem}
\usepackage{scrextend}

\newcommand{\marginpersonal}[3]
{\begin{addmargin}[#1em]{#2em}
		#3
\end{addmargin}}

\newcommand{\margin}[1]
{\begin{addmargin}[2em]{1em}
		#1
\end{addmargin}}

\newcommand{\enurom}[1]
{\vspace{-2mm}\begin{enumerate}[label=\rm{(\roman*)}]
		#1
\end{enumerate}}

\newcommand{\enualp}[1]
{\vspace{-2mm}\begin{enumerate}[label=\rm{(\alph*)}]
		#1
\end{enumerate}}

\newcommand{\ccd}[0]
{(.)}

\newcommand{\ttnn}[1]
{\textnormal{#1}}

\newcommand{\modulo}[1]
{\left|#1\right|}

\newcommand{\rif}[1]
{(\ref{#1})}

\newcommand{\norm}[1]
{\left\| #1 \right\|}

\newcommand{\ra}
{\rightarrow}

\newcommand{\ps}[2]
{\langle\,#1,#2\rangle}

\newcommand{\rt}[0]
{(t)}

\newcommand{\rs}[0]
{(s)}

\newcommand{\mineq}[1]
{\leq #1}

\newcommand{\mageq}[1]
{\geq #1}

\newcommand{\equazioneref}[2]
{
		\begin{eqnarray}\label{#1}
		\begin{split}
		#2
		\end{split}
		\end{eqnarray}
}

\newcommand{\sistemaref}[2]
{
		\begin{eqnarray}
		\begin{cases}\label{#1}
		#2
		\end{cases}
		\end{eqnarray}
}

\newcommand{\sistemanoref}[1]
{
		\begin{eqnarray*}
			\begin{cases}
				#1
			\end{cases}
		\end{eqnarray*}
}

\newcommand{\eee}[1]
{
		\begin{eqnarray*}\begin{split}
				#1
		\end{split}\end{eqnarray*}
}

\newcommand{\graffe}[1]
{\left\{ #1 \right\}}

\newcommand{\tonde}[1]
{\left ( #1 \right )}

\newcommand{\eps}[0]
{\varepsilon }

\newcommand{\scr}[1]
{\mathscr{#1}}

\newcommand{\bb}[0]
{\mathbb}

\newcommand{\vvv}[1]
{\textquoteleft #1\textquoteright\,}

\newcommand{\dl}[0]
{ {{\textnormal{d\hspace{-4.6pt} l}}} }

\begin{document}

	\title{Representation of weak solutions of convex Hamilton-Jacobi-Bellman equations on infinite horizon}
	
	\subtitle{}
	
	\author{Vincenzo Basco}
	
	\institute{Vincenzo Basco \at	
		Department of Electrical \& Electronic Engineering\\
		The University of Melbourne\\
		Victoria 3010\\
		 Australia\\
		 vincenzobasco@unimelb.edu.au, vincenzobasco@gmail.com.
	}

	\maketitle

	\begin{abstract}
		In the present paper it is provided a representation result for the weak solutions of a class of evolutionary Hamilton-Jacobi-Bellman equations on infinite horizon, with Hamiltonians measurable in time and fiber convex.	
		Such Hamiltonians are associated to a $-$ faithful $-$ representation namely involving two functions measurable in time and locally Lipschitz in the state and control. Our results concern to recover a representation of convex Hamiltonians under a relaxed assumption on the Fenchel transform of the Hamiltonian with respect the fiber.	We apply them to investigate uniqueness of weak solutions vanishing at infinity of a class of time dependent Hamilton-Jacobi-Bellman equations, regarded as an appropriate value function of an infinite horizon control problem under state constraints,	assuming a viability condition on the domain of the aforementioned Fenchel transform.
	\end{abstract}
	\keywords{Hamilton-Jacobi-Bellman equations\and Weak solutions\and Infinite horizon \and State constraints \and Representation of Hamiltonians.}
	\subclass{70H20 \and 49L25 \and 49J24 \and 35E10.}

	\section{{I{ntroduction}}}
	
	In this paper we address the Hamilton-Jacobi-Bellman (HJB) equation on infinite horizon
	\equazioneref{def_weak_sol}{
		-V_t+H(t,x,-D_x V)=0\quad\ttnn{on }(0,+\infty) \times \scr O,
	}
	where $H:\bb R^+\times \bb R^n\times \bb R^n\ra \bb R$ is the Hamiltonian, $\scr O \subset \bb R^n$ is an open subset, and $D_x$ stands for the gradient with respect to the space variable.
	The notion of \textit{weak} $-$ or \textit{viscosity} $-$ solution to a first-order partial differential equation to study stationary and evolutionary HJB equations is due to Crandall, Evans, and Lions \cite{crandalllionsevans1984someproperties,lios1982generalizedsolutions}. Using superdifferentials and subdifferentials of continuous functions, they obtained existence and uniqueness results in the class of continuous functions for Cauchy problems associated to HJB equations, when the Hamiltonian is continuous, by means of concept of sub/super-solution. Barles and Souganidis \cite{barles1984existenceresults,souganidis1985existence} extended the existence results to a large class of continuous Hamiltonians. As matter of fact, such notion of solution turns out to be quite unsatisfactory for HJB equations arising in control theory and the calculus of variations $-$ cfr. \cite{vinter00} for further discussions. In fact, the value function, that is a viscosity solution of HJB equation, loses the differentiability property $-$ even in the absence of state constraints $-$ whenever multiple optimal solutions are present at the same initial datum or when additional state constraints arise. Further, the definition of solution can be stated equivalently in terms of \vvv{normals} to the epigraph and the hypograph of the solution. But, when the dynamics is only measurable in time  such equivalence may fail to be true.
	
	Nevertheless, the study of uniqueness of weak solutions can be carried out by using the definition of solution from \cite{frankowskaplaskrze1995measviabth}. In order to deal with Hamiltonians which are measurable in time, Ishii \cite{ishii1985hbjediscontinuous} proposed a new notion of weak solution in the class of continuous functions, proving the existence and uniqueness in the stationary case, and, for the evolutionary case, on $(0,+\infty)\times \bb R^n$. 
	The continuously differentiable test functions needed to define such solutions are more complex, involving in addition some  integrable mappings. This yields an existence result for weak solutions.
	Since uniqueness results for viscosity solutions of the Bellman equation
	\eee{
		-V_t+\sup_{u\in \bb R^m}  \graffe{ \ps{f(t,x,u)}{-D_x V} -\ell(t,x,u)   }=0\quad\ttnn{on }(0,+\infty) \times \scr O,
	}
	assert that the weak solutions are represented as the value function of the control problem associated to the couple $(f,\ell)$ where $f$ is the dynamics and $\ell$ the Lagrangian,	one may ask the possibility for the viscosity solution of the HJB in \rif{def_weak_sol} to be represented as the value function of an appropriate optimal control control problem under state constraints. In the compact time case, this viewpoint was investigated by Ishii in \cite{ishii1985representationconvex} for the convex case, providing H\"older continuous representation, and in \cite{ishii1988representation} for Hamiltonian non necessarily convex. In this latter work, the lagrangian $\ell$ is merely continuous and the space of control is infinite dimensional. On the other hand, in \cite{rampazzo2005faithful} the author construct a faithful representation, Lipschitz continuous in the state and control. Frankowska and Sadrakyan \cite{frankowskasedrak2014stablerephami} investigated faithful representations of Hamiltonians that are measurable in time, giving sharp results on the Lipschitz constants of faithful representations, and the stability of the faithful representation, key property to show convergences results of value functions. However, in \cite{misztela2019repre} the author, under some weaker assumptions and assuming the boundedness from above of the Fenchel transform $H^*(t,x,.)$ on its domain, constructed a faithful representation $(f,\ell)$ showing, for the finite horizon setting, the equivalence between the calculus of variation problem and the optimal control problem associated to the representation $(f,\ell)$ for the free-constrained setting $\scr O=\bb R^n$.
	
	Unfortunately, when addressing state constrained problems, i.e. $\scr O\neq \bb R^n$, the usual assumptions on the Hamiltonian may be insufficient to derive existence and uniqueness results for the HJB equations, even for finite horizon problem. In the framework of control problems, Soner \cite{Soner86a} proposed a controllability assumption $-$ called \textit{inward pointing condition} $-$ to investigate the continuity of the value function and the uniqueness of viscosity solutions of an autonomous control problem. However, such a property cannot be used for sets with nonsmooth boundaries and boundedness assumptions on $\scr O$ may be quite restrictive for many applied models. To allow for nonsmooth boundaries, Ishii and Koike \cite{ishiikoike1996newformulationstateconstrained} generalized Soner's condition in the setting of infinite horizon problems and continuous solutions $-$ cfr. also \cite{bascofrankowska2018lipschitz}. To deal with discontinuous solutions, Ishii \cite{ishii1992perron} introduced the concept of \textit{lower} and \textit{upper semicontinuous envelopes} of a function, proving that the upper semicontinuous envelope of the value function of an optimal control problem is the largest upper semicontinuous subsolution and its lower semicontinuous envelope is the smallest lower semi-continuous supersolution. This approach, however, does not ensure the uniqueness of $-$ weak $-$ solutions of the HJB equation. On the other hand, the upper semicontinuous envelope does not have any meaning in optimal control theory while dealing with minimization problems $-$ the lower semicontinuous envelope determines the value function of the relaxed problem. Barron, Frankowska, and Jensen \cite{barronjensen1991optimalcontrolsemicontinuous,frankowska1993lowersemicontinuous} developed a different concept of  solutions for the HJB equation associated to constraint-free Mayer optimal control problems, with a discontinuous cost. In this approach only subdifferentials are involved. While investigating infinite horizon problems, in the early work \cite{bascofrank2019hjbinfhor}, the  merely measurable case, it became clear that, in order to get uniqueness,  it is convenient to replace subdifferentials by normals to the epigraph of solutions.  Such {\textquoteleft{geometric}\textquoteright} definition of solution avoids using test functions  and  allows to have a unified approach to both the  continuous and measurable case.
	
	The contribution of the present paper is to give a representation and an uniqueness theorem for weak solutions $-$ in the sense of definition given in Section \ref{sec_inf_hor_control_prob} $-$ of non-autonomous HJB equations \rif{def_weak_sol}, with Hamiltonians time measurable and convex in the fiber. We prove, assuming the relaxed assumption of local boundedness from above of $H^*(t,x,.)$ on its domain, a faithful representation result $-$ cfr. Section 3 $-$ of convex Hamiltonians in order to recover, under a backward viability assumption on the domain of the Fenchel transform $H^*(t,x,.)$, the uniqueness of weak solutions in the class of vanishing functions at infinity
	\equazioneref{vanishing_cond}{
		\lim_{t\ra +\infty}\,\sup_{x\in \ttnn{dom}\,V(t,.)}\modulo{V(t,x)}=0.
	}
	
	The outline of this paper is as follows. In Section 2 we recall some basic concept and result in non-smooth analysis. The Sections 3 and 4 are devoted to the parametrization of set-valued maps and the representation of convex Hamiltonians, respectively. In Section 5 we state the main result of this paper, showing an uniqueness theorem for weak solution of HJB on infinite horizon with vanishing condition at infinity \rif{vanishing_cond} and a representation result of such $-$ weak $-$ solutions as the value function of an appropriate infinite horizon optimal control problem under state constraints.

	N\textsc{\footnotesize{{OTATIONS:}}} $\bb R^+$, $|\,.\,|$, and $\ps{.}{.}$ stands for the set of all non-negative real numbers, the Euclidean norm, and the scalar product, respectively. Let $E\subset \bb R^k$ be a subset and $x\in \bb R^k$. The Euclidean distance between $x$ and $E$ and the closed ball in $\bb R^k$ of radius $r>0$ and centered at $x$ are denoted, respectively, by $d(x,E)$ and $B(x,r)$ ($\bb B:=B(0,1)$). $\ttnn{cl }E$, $\ttnn{int }E$, $\ttnn{bdr }E$, $E^c$, and $\ttnn{co }E$ stands, respectively, for the closure, the interior, the boundary, the complement, and the convex hull of $E$ ($\overline{\ttnn{co}}\,E:=\ttnn{cl }\ttnn{co }E$). We put $\norm{E}:=\sup_{k\in E}|k|\in \bb R^+\cup \{+ \infty\}$. $\scr C^m$ stands for the family of all non-empty closed convex subsets in $\bb { R } ^ { m }$  and we write $J\in \scr C^m_b$ if $J$ is bounded and $J\in \scr C^m$. $\mu$ denotes the Lebesgue measure.

	\section{{P{reliminaries on non-smooth analysis}}}\label{sec_preliminaries}

	The \textit{negative polar cone} of a non-empty subset $C\subset \bb R^k$ is the set defined by 
	
	$
		C^-:=\graffe{p\in \bb R^k\,|\,\ps{p}{c}\mineq 0 \ttnn{ for all } c\in C}.
	$
	
	\noindent The \textit{positive polar cone} of $C$ is the set defined by $C^+:=-C^-$. Let $D\subset \bb R^n$ be non-empty and $x\in \ttnn{cl }{D}$. The \textit{contingent cone} to $D$ at $x$ is the set defined by
	
	$
	T_D(x):=\{v\in \bb R^n\,|\,  \liminf_{h\ra 0+} \dfrac{d_D(x+hv)}{h}=0\}.
	$
	
	\noindent The \textit{limiting normal cone} to $D$ at $x$, written $N_D(x)$, is the closed set of all $p\in \bb R^n$ such that $\liminf_{y\ra_D x} d_{T_D(y)^-}(p)=0$ and it is known that  $N_D(x)^-\subset T_D(x)$, provided $D$ is closed. 
	If $D$ is convex, then $N_D(x)$ is also called \textit{normal cone} and holds (cfr. \cite{vinter00})
	\equazioneref{cono_normale_per_convessi}{
		p\in N_D(x) \Longleftrightarrow \ps{p}{y-x}\mineq 0\;\; \forall  y\in D.
	}
	We denote for any $r>0$
	
	$
		N^{r}_D(x):=\{  p\in \bb R^n\,|\, p\in {\overline{\ttnn{co}}}\,N_D(y),\,y\in (\ttnn{bdr } D)\cap B(x,r),\, |p|=1 \}.
	$
	
	\noindent Now, assume that $D$ is closed. A vector \(p \in \bb{R}^{n}\) is called a \textit{proximal normal} to $D$ at $x$ if there exists $\lambda>0$ such that $d_{D}(x+\lambda p)=\lambda|p|$, i.e.,
	\equazioneref{B_subset_D_complement}{
		\ttnn{int }B(x+\lambda p,\lambda|p|) \subset D^{c}.
	}
	We note that, since $D$ is closed, for any \(x \in D\) the set of all proximal normals is non-empty and it reduces to the singleton \(\{0\}\) at any interior point of \(D\).

	Let $\varphi:\bb R^k\ra \bb R\cup \{\pm \infty\}$ be an extended real function. We write $\ttnn{dom }\varphi$, $\ttnn{epi }\varphi$, $\ttnn{hypo }\varphi$, and  $\ttnn{graph }\varphi$ for the \textit{domain}, the \textit{epigraph}, the \textit{hypograph},  and the \textit{graph} of $\varphi$, respectively.	We recall that $\varphi$ is said to be measurable if $\varphi^{-1}(\graffe{+\infty})$, $\varphi^{-1}(\graffe{-\infty})$, and $\varphi^{-1}(I)$ are measurable for any Borel subset $I\subset \bb R$. The function $\varphi$ is said to be \textit{proper} if $\ttnn{dom }\varphi\neq \emptyset$ and $\varphi$ never attain $-\infty$ values. We reacall that the \textit{contingent epiderivative/hypoderivative}, in direction $u\in \bb R^k$, of $\varphi$ at $x\in \ttnn{dom}\,\varphi$ are, respectively, defined by
	
	$D_\uparrow \varphi(x)(u):=\liminf_{h\ra 0+,\,u'\ra u}\dfrac{\varphi(x+hu')-\varphi(x)}{h},$
	
	$D_\downarrow \varphi(x)(u):=\limsup_{h\ra 0+,\,u'\ra u}\dfrac{\varphi(x+hu')-\varphi(x)}{h}.$
	
	\noindent The Fr\'echet \textit{subdifferential/superdifferential} of $\varphi$ at $x\in \ttnn{dom}\,\varphi$ are, respectively, defined by
	
	$\partial_- \varphi (x):=\{p\in \bb R^k\,|\, \liminf_{y\ra x}\dfrac{\varphi(y)-\varphi(x)-\ps{p}{y-x}}{\modulo{y-x}}\mageq 0\},$
	
	$\partial_+ \varphi (x):=\{p\in \bb R^k\,|\, \limsup_{y\ra x}\dfrac{\varphi(y)-\varphi(x)-\ps{p}{y-x}}{\modulo{y-x}}\mineq  0\}.$

	\noindent The following result is well known (cfr. \cite[Theo. 1]{rockafellar1981proximal} and \cite[Prop. 8.12]{rockafellar2009variational}).
	\begin{lemma}[\cite{rockafellar1981proximal,rockafellar2009variational}]\label{lemma2}
		Assume that $\varphi$ is lower semicontinuous and convex.
		
		Then, for any $x \in \ttnn{dom } \varphi$:
		\margin{\hspace{-6mm}\ttnn{(i)} $\partial_+ \varphi({x})= \partial \varphi({x}):=\{p \in \bb R^k\,|\, \varphi(y) \mageq \varphi({x})+\langle p, y-{x}\rangle \text { for all } y\in \bb R^k\}$ and it is called \textit{subdifferential} (in the sense of convex analysis) of $\varphi$ at $x$;
			
		}
		
		\margin{\hspace{-7mm}\ttnn{(ii)} for any $( p , 0 ) \in N _ { \ttnn {epi }\varphi } ( x , \varphi( x ) )$	there exist two sequences $x_i\in \ttnn{dom }\varphi$ and $\left( p _ { i } , q _ { i } \right) \in N _ { \ttnn {epi } \varphi} \left( x _ { i } , \varphi\left( x _ { i } \right) \right)$ such that $q_i<0$ for all $i\in \bb N$ and $	\left( x _ { i } , \varphi\left( x _ { i } \right) \right) \ra ( x , \varphi( x ) )$, $  (p_i,q_i)\ra (p,0)$.
			
		}

	\end{lemma}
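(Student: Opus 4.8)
The plan is to reduce both items to standard facts of convex analysis, following \cite{rockafellar1981proximal,rockafellar2009variational}. Write $C:=\ttnn{epi}\,\varphi\subset\bb R^{k+1}$, which is closed and convex because $\varphi$ is proper, lower semicontinuous and convex, and put $z_0:=(x,\varphi(x))\in\ttnn{bdr}\,C$. For \ttnn{(i)} the inclusion $\partial\varphi(x)\subset\partial_-\varphi(x)$ is immediate: if $\varphi(y)\mageq\varphi(x)+\ps{p}{y-x}$ for all $y$, then the difference quotient defining $\partial_-\varphi(x)$ is everywhere $\mageq0$, so its $\liminf$ at $x$ is $\mageq0$. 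For the reverse inclusion I would use that convexity makes $h\mapsto\tonde{\varphi(x+hu)-\varphi(x)}/h$ nondecreasing on $(0,\infty)$; hence if $p\in\partial_-\varphi(x)$, restricting the $\liminf$ in its definition to the ray $y=x+hu$, $h\ra0+$, gives $\ps{p}{u}\mineq\lim_{h\ra0+}\tonde{\varphi(x+hu)-\varphi(x)}/h\mineq\varphi(x+u)-\varphi(x)$ for every $u$, i.e. $\varphi(x+u)\mageq\varphi(x)+\ps{p}{u}$ for all $u$, so $p\in\partial\varphi(x)$. (Equivalently this is \cite[Prop.~8.12]{rockafellar2009variational}.)

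For \ttnn{(ii)} I would first record the shape of $N_C(z_0)$ obtained from \rif{cono_normale_per_convessi}: testing with $(x,\varphi(x)+1)\in C$ forces $q\mineq0$ whenever $(p,q)\in N_C(z_0)$; dividing $\ps{(p,q)}{(y,r)-z_0}\mineq0$ by $|q|$ with $r=\varphi(y)$ shows that, when $q<0$, $(p,q)\in N_C(z_0)$ is equivalent to $p/|q|\in\partial\varphi(x)$; and $q=0$ gives $(p,0)\in N_C(z_0)\Leftrightarrow p\in N_{\ttnn{dom}\,\varphi}(x)$. So the claim reduces to: for each $p\in N_{\ttnn{dom}\,\varphi}(x)$ there are $x_i\ra x$ with $\varphi(x_i)\ra\varphi(x)$, scalars $\la_i\ra0+$ and $v_i\in\partial\varphi(x_i)$ with $\la_i v_i\ra p$, whence one takes $(p_i,q_i):=(\la_i v_i,-\la_i)$. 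If $\partial\varphi(x)\neq\emptyset$ no change of base point is needed: taking $v_0\in\partial\varphi(x)$, for $t\mageq0$ and every $y$ the quantity $\varphi(y)-\varphi(x)-\ps{v_0+tp}{y-x}$ equals $\tonde{\varphi(y)-\varphi(x)-\ps{v_0}{y-x}}-t\ps{p}{y-x}$, which is $\mageq0$ since the first term is (as $v_0\in\partial\varphi(x)$) and $\ps{p}{y-x}\mineq0$ on $\ttnn{dom}\,\varphi$ (as $p\in N_{\ttnn{dom}\,\varphi}(x)$); hence $v_0+tp\in\partial\varphi(x)$ for all $t\mageq0$, and $x_i:=x$, $v_i:=v_0+ip$, $\la_i:=1/i$ work (this includes $p=0$).

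The remaining case $\partial\varphi(x)=\emptyset$ is where I expect the real difficulty, since then $z_0$ carries no non-horizontal normal and the base point genuinely has to move. My plan is to steer the \emph{dual} variable: fix $v_0\in\ttnn{ri}(\ttnn{dom}\,\varphi^*)$, observe that $p\in N_{\ttnn{dom}\,\varphi}(x)$ forces the whole ray $v_0+\bb R^+p$ into $\ttnn{ri}(\ttnn{dom}\,\varphi^*)$, and pick $x_R\in\partial\varphi^*(v_0+Rp)$, i.e. $v_0+Rp\in\partial\varphi(x_R)$ with $x_R\in\ttnn{dom}\,\varphi$; then $|v_0+Rp|\ra\infty$ and $(v_0+Rp)/|v_0+Rp|\ra p/|p|$, while convexity of $R\mapsto\varphi^*(v_0+Rp)$ together with $(\varphi^*)_\infty=\sigma_{\ttnn{dom}\,\varphi}$ forces $\ps{x_R}{p}\ra\sigma_{\ttnn{dom}\,\varphi}(p)=\ps{p}{x}$, i.e. $x_R$ drifts onto the face of $\ttnn{dom}\,\varphi$ exposed by $p$. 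When that face is $\{x\}$ one gets $x_R\ra x$, $\varphi(x_R)\ra\varphi(x)$, and $\la_R:=|p|/|v_0+Rp|\ra0+$ gives $\la_R(v_0+Rp)\ra p$; when it is larger one adds to $v_0+Rp$ a vanishing perturbation in a direction normal to that face and recurses on its dimension. Making this recursion rigorous — preserving $\varphi(x_R)\ra\varphi(x)$ throughout and tuning the perturbation rates so the drift lands exactly at $x$ — is the main obstacle I foresee; the details are those of \cite[Theo.~1]{rockafellar1981proximal} and \cite[Prop.~8.12]{rockafellar2009variational}, to which I would ultimately appeal.
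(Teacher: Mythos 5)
First, a point of comparison: the paper does not prove this lemma at all — it is quoted as a known result from the cited works of Rockafellar (Theo.~1 of the 1981 paper and Prop.~8.12 of Rockafellar--Wets) — so the only meaningful question is whether your argument is complete on its own. Part (i) is correct, and your silent replacement of $\partial_+\varphi$ by $\partial_-\varphi$ is the right reading: as literally written with the Fr\'echet superdifferential the identity is false already for $\varphi=|\cdot|$ at $0$, while the cited reference and the later uses of the lemma in the paper concern the subdifferential. Your reduction of (ii) is also correct: horizontal normals to $\ttnn{epi}\,\varphi$ at $(x,\varphi(x))$ are exactly $N_{\ttnn{dom}\,\varphi}(x)\times\{0\}$, non-horizontal ones correspond after rescaling to elements of $\partial\varphi$, and when $\partial\varphi(x)\neq\emptyset$ the ray $v_0+\bb R^+p\subset\partial\varphi(x)$ yields the approximating normals at the fixed base point $x_i=x$.

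The genuine gap is the case $\partial\varphi(x)=\emptyset$, which you yourself flag: that case is precisely the substantive content of the cited theorem, since there no normal at $(x,\varphi(x))$ has negative vertical component and the base point must move. Your dual scheme (take $v_0\in\ttnn{ri}(\ttnn{dom}\,\varphi^*)$, push along $v_0+Rp$, select $x_R\in\partial\varphi^*(v_0+Rp)$) does keep the ray inside $\ttnn{ri}(\ttnn{dom}\,\varphi^*)$ and gives $\ps{p}{x_R}\ra\sigma_{\ttnn{dom}\,\varphi}(p)=\ps{p}{x}$; moreover, once $x_R\ra x$ the convergence $\varphi(x_R)\ra\varphi(x)$ you worry about is automatic, from $\varphi(x)\mageq\varphi(x_R)+\ps{v_0+Rp}{x-x_R}$ together with $\ps{p}{x-x_R}\mageq 0$ and lower semicontinuity. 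But nothing in the argument forces $x_R\ra x$, or even boundedness of $x_R$, when the face of $\ttnn{cl}\,(\ttnn{dom}\,\varphi)$ exposed by $p$ is not the singleton $\{x\}$, and the proposed recursion over faces with tuned perturbations is left unproved, ending in an explicit appeal to the same references. So as a standalone proof the hard half of (ii) is missing — which leaves you, in the end, in the same position as the paper itself, namely citing Rockafellar for exactly that fact; what you add beyond the paper is a correct proof of (i), the correct structural reduction of (ii), and a complete proof in the case $\partial\varphi(x)\neq\emptyset$.
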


	The Fenchel \textit{transform} (or \textit{conjugate}) of $\varphi$, written $\varphi^*$, is the extended real function $\varphi^*:\bb R^k\ra \bb R\cup \{\pm\infty\}$ defined by $\varphi^*(v):=\sup_{p\in \bb R^k}\{\ps{v}{p}-\varphi(p)\}$. The following results are known (cfr. \cite[Theo. 11.1, 11.3]{rockafellar2009variational}).
	\begin{lemma}[\cite{rockafellar2009variational}]\label{lemma1}
		Assume that $\varphi$ is proper, lower semicontinuous, and convex.
		
		Then $\varphi^*$ is a proper lower semicontinuous convex function, $(\varphi^*)^*=\varphi$, $\ttnn{dom }\varphi^*$ is convex, and	for all $p,\,v\in \bb R^k$ it holds that $p \in \partial \varphi^ { * } ( v )\Longleftrightarrow v \in \partial \varphi( p )  \Longleftrightarrow  \varphi( p ) + \varphi^ { * } ( v ) = \langle v , p \rangle.$
		
	\end{lemma}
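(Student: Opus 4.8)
The plan is to obtain all four assertions as standard consequences of convex duality, with the Fenchel--Moreau (biconjugation) identity as the core.

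\emph{Step 1: $\varphi^*$ is proper, lsc, and convex.} I would write $\varphi^*(v)=\sup_{p\in\ttnn{dom}\,\varphi}\graffe{\ps{v}{p}-\varphi(p)}$ as a pointwise supremum of affine --- hence convex and continuous --- functions of $v$; every such supremum is convex and lower semicontinuous, so $\varphi^*$ is convex and lsc. For properness, fixing any $\bar p\in\ttnn{dom}\,\varphi$ gives $\varphi^*(v)\geq\ps{v}{\bar p}-\varphi(\bar p)>-\infty$ for all $v$, so $\varphi^*$ never takes the value $-\infty$; and since $\varphi$ is proper, lsc and convex it admits a continuous affine minorant $p\mapsto\ps{p_0}{p}-c$ (a supporting functional produced by Hahn--Banach), whence $\varphi^*(p_0)\leq c<+\infty$ and $\ttnn{dom}\,\varphi^*\neq\emptyset$.

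\emph{Step 2: biconjugation $(\varphi^*)^*=\varphi$.} The Fenchel--Young inequality $\varphi(p)+\varphi^*(v)\geq\ps{v}{p}$ is immediate from the definition of $\varphi^*$, and taking the supremum over $v$ gives $(\varphi^*)^*\leq\varphi$. For the reverse inequality I would argue by contradiction: if $(\varphi^*)^*(\bar p)<\varphi(\bar p)$ for some $\bar p$, then the point $(\bar p,(\varphi^*)^*(\bar p))$ lies outside the nonempty closed convex set $\ttnn{epi}\,\varphi$, so the Hahn--Banach separation theorem supplies a nonzero pair $(\xi,\beta)$ strictly separating them. When $\beta\neq0$ this yields an affine minorant of $\varphi$ that is strictly above $(\varphi^*)^*(\bar p)$ at $\bar p$, contradicting the definition of $(\varphi^*)^*$; when $\beta=0$ (a vertical separating hyperplane) one first adds a small multiple of the affine minorant from Step 1 to tilt the separating functional and reduce to the previous case. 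This separation step, and in particular the vertical case, is the only genuinely delicate point of the argument.

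\emph{Step 3: the remaining assertions.} Convexity of $\ttnn{dom}\,\varphi^*$ is automatic, being the domain of the convex function $\varphi^*$. For the equivalences, I observe that $\varphi(p)+\varphi^*(v)=\ps{v}{p}$ holds exactly when the supremum defining $\varphi^*(v)$ is attained at $p$, i.e.\ $\ps{v}{q}-\varphi(q)\leq\ps{v}{p}-\varphi(p)$ for every $q$, i.e.\ $\varphi(q)\geq\varphi(p)+\ps{v}{q-p}$ for every $q$, which by Lemma~\ref{lemma2}(i) is precisely $v\in\partial\varphi(p)$. Applying the same equivalence with $\varphi^*$ (proper, lsc and convex by Step 1) in place of $\varphi$, and using $(\varphi^*)^*=\varphi$ from Step 2, shows that the equality $\ps{v}{p}=\varphi^*(v)+(\varphi^*)^*(p)$ is equally equivalent to $p\in\partial\varphi^*(v)$, which closes the chain of equivalences.
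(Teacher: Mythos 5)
Your argument is correct, but it is worth noting that the paper offers no proof at all for this statement: it is quoted verbatim from Rockafellar--Wets (Theorems 11.1 and 11.3), so there is nothing internal to compare against. What you have done is reconstruct the standard self-contained argument: conjugates as suprema of affine functions (convexity, lower semicontinuity, properness via an affine minorant of $\varphi$), the Fenchel--Moreau biconjugation identity by separating a point of strict inequality from $\ttnn{epi}\,\varphi$, with the vertical-hyperplane case handled by mixing the separating functional with a nonvertical affine minorant, and finally the attainment characterization of equality in Fenchel--Young to get the subdifferential equivalences, applied once to $\varphi$ and once to $\varphi^*$ together with $(\varphi^*)^*=\varphi$. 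This is exactly the textbook route, and each step is sound; the only places where a referee might ask for more are (a) the $\beta=0$ separation case, which you only sketch --- one should say explicitly that a vertical separator can occur only when $\bar p\notin\ttnn{dom}\,\varphi$, and that adding a sufficiently large multiple of the vertical functional to the affine minorant produces affine minorants of $\varphi$ whose values at $\bar p$ exceed $(\varphi^*)^*(\bar p)$, every affine minorant being dominated by $(\varphi^*)^*$ --- and (b) the equivalence in Step 3, where your attainment argument tacitly assumes $p\in\ttnn{dom}\,\varphi$; the excluded case is harmless since for $\varphi(p)=+\infty$ all three conditions fail (using that $\varphi^*>-\infty$), but a complete write-up should say so. Note also that you cite Lemma \ref{lemma2}(i) only for the definition of the convex subdifferential it contains, which avoids any circularity since that lemma is itself an external citation. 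In short: correct, standard, and more than the paper itself provides; for the paper's purposes the citation suffices, but your proof would stand on its own with the two small completions above.
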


	\section{{P{arametrization of set-valued maps}}}
	
	We recall that the \textit{extended Hausdorff distance} between \(J,\, K\in \scr C^m\) is defined by
	
	$\dl (J, K) :=\max\; \{\sup _{x \in K} d(x, J), \sup _{x \in J} d(x, K)\}\in \bb R \cup \graffe{+\infty}.$
	
	\noindent Notice that $\dl(J,K)<+\infty$ for any $J,K\in \scr C^m_b$. Next we state a result on Lipschitz parametrization of convex sets (cfr. \cite[Chapter 9]{aubin2009set}).
	
	\begin{lemma}[\cite{aubin2009set}]\label{lemma3}	
		Let $P : \bb { R } ^ { m } \times \scr C^m \rightsquigarrow \scr C^m$ be the projection map defined by $P ( u , J ) := J \cap B (u , 2 d ( u , J ) ).$

		Then 	${\dl  } ( P ( u , J ) , P ( v , K) ) \leq 5 (  { \dl  } ( J , K) + | u - v | )$ for all $J , K \in \scr C^m$ and all $u,v \in \bb { R} ^ { m }$.	

	\end{lemma}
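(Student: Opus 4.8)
The plan is to work directly from the definition of the extended Hausdorff distance. Since both $\dl(\cdot,\cdot)$ and the right-hand side are symmetric under interchanging $(u,J)$ with $(v,K)$, it suffices to prove that every $x\in P(u,J)$ satisfies $d(x,P(v,K))\le 5(\dl(J,K)+|u-v|)$. Write $\delta:=\dl(J,K)$ (which we may assume finite) and $\rho:=|u-v|$. Two elementary facts will be used repeatedly: since $K\in\scr C^m$ the metric projection $\Pi_K$ is well defined and $1$-Lipschitz, and $\Pi_K(v)\in K\cap B(v,d(v,K))\subset P(v,K)$, so $P(v,K)\neq\emptyset$; and $d(\cdot,K)$ is $1$-Lipschitz with $d(u,J)\le d(u,K)+\dl(J,K)$ for every $u$.

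Fix $x\in P(u,J)$, so $x\in J$ and $|x-u|\le 2d(u,J)$. First I would pass to a nearby point of $K$: set $z:=\Pi_K(x)\in K$; as $x\in J$, one has $|x-z|=d(x,K)\le\sup_{w\in J}d(w,K)\le\delta$. If $|z-v|\le 2d(v,K)$ then $z\in P(v,K)$ and $d(x,P(v,K))\le|x-z|\le\delta$, so we may assume $|z-v|>2d(v,K)$. Next I would show that $z$ is only slightly outside the truncating ball $B(v,2d(v,K))$: combining $|z-v|\le|z-x|+|x-u|+|u-v|\le\delta+2d(u,J)+\rho$ with $d(u,J)\le d(u,K)+\dl(J,K)\le d(v,K)+\rho+\delta$ gives $|z-v|\le 2d(v,K)+3(\delta+\rho)$.

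Then I would ``pull $z$ back into the ball along $K$''. Put $p:=\Pi_K(v)\in P(v,K)$ and $r:=d(v,K)=|p-v|$. By convexity $[z,p]\subset K$, and since $|z-v|>2r\ge|p-v|$ the strictly convex quadratic $t\mapsto|(1-t)z+tp-v|^{2}$ equals $4r^{2}$ at a unique $t_{0}\in(0,1)$; set $y:=(1-t_{0})z+t_{0}p$, so $y\in K\cap B(v,2r)=P(v,K)$. Using the projection inequality $\ps{z-p}{p-v}\ge 0$ (equivalently $|z-p|^{2}\le|z-v|^{2}-r^{2}$) together with the explicit expression for $t_{0}$, a short one–variable estimate gives $|z-y|\le\sqrt{\,|z-v|^{2}-r^{2}\,}-\sqrt3\,r$. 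Finally, inserting the bound $|z-v|\le 2r+3(\delta+\rho)$ and regarding the right-hand side as a function of $r\ge0$, which is increasing with supremum $2\sqrt3(\delta+\rho)$, we obtain $|z-y|\le 2\sqrt3(\delta+\rho)$ and hence $d(x,P(v,K))\le|x-z|+|z-y|\le\delta+2\sqrt3(\delta+\rho)\le(1+2\sqrt3)(\delta+\rho)<5(\delta+\rho)$, as desired.

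The delicate point is the quantitative estimate of the ``pull-back'' distance $|z-y|$. Replacing the norm of a convex combination by the convex combination of norms (the crude bound on $t_{0}$) is too wasteful: it yields the constant $3\sqrt3$ in place of $2\sqrt3$ above, and $1+3\sqrt3>5$. The gain comes from keeping the projection inequality $\ps{z-p}{p-v}\ge0$ when estimating $t_{0}$, so that the slide length is at most $\sqrt{\,|z-v|^{2}-r^{2}\,}-\sqrt3\,r$, and then optimising over the still-free quantity $r=d(v,K)$; this is exactly where the final constant is produced. Everything else reduces to the triangle inequality and the $1$-Lipschitz continuity of $d(\cdot,K)$ and $\Pi_{K}$.
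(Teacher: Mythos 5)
Your proof is correct, and since the paper does not prove Lemma \ref{lemma3} at all (it is quoted from \cite{aubin2009set}), the natural comparison is with the standard argument there, whose skeleton you share: by symmetry reduce to bounding $d(x,P(v,K))$ for $x\in P(u,J)$, project $x$ onto $K$ to get $z$ with $|x-z|\le \dl(J,K)$, and if $z$ escapes the truncating ball $B(v,2d(v,K))$ slide it along the segment $[z,\Pi_K(v)]\subset K$ until it meets the sphere of radius $2d(v,K)$. Where you genuinely differ is in the quantitative step: the textbook estimate of the slide length is cruder (essentially bounding the norm of the convex combination by the combination of norms), while you retain the obtuse-angle inequality $\ps{z-p}{p-v}\ge 0$ and then optimise over $r=d(v,K)$; I checked that this works as claimed, since $|z-p|\le\sqrt{|z-v|^2-r^2}$ together with $\ps{z-v}{z-p}=\tfrac12\big(|z-v|^2-r^2+|z-p|^2\big)$ forces the component of $z-v$ along $z-p$ to be at least $\sqrt{|z-v|^2-r^2}$, and the smaller root $t_0$ of the quadratic then gives $|z-y|\le\sqrt{|z-v|^2-r^2}-\sqrt3\,r$, whence the supremum $2\sqrt3(\delta+\rho)$ over $r\ge0$ after inserting $|z-v|\le 2r+3(\delta+\rho)$. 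This buys a sharper constant $1+2\sqrt3\approx 4.46$, so the stated bound with $5$ follows a fortiori, and your preliminary reductions (finiteness of $\dl(J,K)$, nonemptiness of $P(v,K)$, the inequality $d(u,J)\le d(v,K)+\dl(J,K)+|u-v|$) are all in order. The only blemish is the degenerate case $d(v,K)=0$: there the quadratic reaches $4r^2=0$ only at $t_0=1$, not in $(0,1)$, but then $y=p=v\in P(v,K)$ and $|z-y|=|z-v|\le 3(\delta+\rho)$, so the estimate persists; a one-line remark disposing of this case would make the write-up airtight.
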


	\noindent In the following, we consider the map $S_m:\scr C^m_b\ra \bb R^m$ defined by
	
	$
		S_m(J):= \dfrac{1}{\mu(\bb B)}\int_{\bb B} \ttnn{pr}(\partial \sigma_J(p))\,\mu(dp),
	$
	
	\noindent where: for any $K\in \scr C^m$, $\ttnn{pr}(K)$ stands for the projection of $0\in \bb R^m$ onto $K\in  \scr C^m$, i.e., the element in $K$ with minimal norm; for any $J\in \scr C^m_b$, $\sigma_J\ccd$ denotes the support function of $J$, that is $\sigma_J(p):=\max_{q\in J}\ps{p}{q}$. The function $S_m\ccd$ is called \textit{Steiner map (or Steiner selection)} and the following result is well known (cfr. \cite[Theorem 9.4.1]{aubin2009set}).

	\begin{lemma}[\cite{aubin2009set}]
		The function $S_m\ccd$ is $m$-Lipschitz continuous on $\scr C^m_b$ with respect the Hausdorff distance and satisfies
		\equazioneref{S_in_J}{
			S_m(J)\in J\qquad \forall J\in \scr C^m_b.
		}
	\end{lemma}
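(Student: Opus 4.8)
The plan is to reduce the statement to two classical facts about support functions: for $J\in\scr C^m_b$ the function $\sigma_J$ is finite, convex and $\norm{J}$-Lipschitz on $\bb R^m$, with (convex) subdifferential equal to the exposed face $\partial\sigma_J(p)=\graffe{q\in J\,|\,\ps{p}{q}=\sigma_J(p)}\subset J$; and for $J,K\in\scr C^m_b$ one has $\modulo{\sigma_J(p)-\sigma_K(p)}\mineq\dl(J,K)\,\modulo{p}$ for every $p$. First I would check that $S_m$ is well defined. For each $p$ the set $\partial\sigma_J(p)$ is a non-empty compact convex subset of $J$, so $\ttnn{pr}(\partial\sigma_J(p))$ is its unique element of minimal norm and $\modulo{\ttnn{pr}(\partial\sigma_J(p))}\mineq\norm{J}$; moreover, since a finite convex function is differentiable off a $\mu$-null set, at $\mu$-a.e. $p$ one has $\partial\sigma_J(p)=\graffe{\nabla\sigma_J(p)}$ and hence $\ttnn{pr}(\partial\sigma_J(p))=\nabla\sigma_J(p)$. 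As $\nabla\sigma_J$ is a $\mu$-a.e. pointwise limit of continuous difference quotients it is measurable, so $p\mapsto\ttnn{pr}(\partial\sigma_J(p))$ is bounded and measurable on $\bb B$ and the integral defining $S_m(J)$ is meaningful.

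Next I would prove \rif{S_in_J}. Fix $v\in\bb R^m$. Since $\ttnn{pr}(\partial\sigma_J(p))\in J$ for $\mu$-a.e. $p\in\bb B$, we have $\ps{v}{\ttnn{pr}(\partial\sigma_J(p))}\mineq\sigma_J(v)$ there; integrating over $\bb B$ and dividing by $\mu(\bb B)$ gives $\ps{v}{S_m(J)}\mineq\sigma_J(v)$ for every $v\in\bb R^m$. If $S_m(J)\notin J$, the strict separation theorem (recall $J$ is closed and convex) would produce a $v$ with $\ps{v}{S_m(J)}>\sup_{q\in J}\ps{v}{q}=\sigma_J(v)$, a contradiction; hence $S_m(J)\in J$.

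For the Lipschitz estimate, set $u:=\sigma_J-\sigma_K$. Then $u$ is Lipschitz on $\bb B$, hence $u\in W^{1,1}(\bb B)$ with $\nabla u=\nabla\sigma_J-\nabla\sigma_K$ $\mu$-a.e., and $\modulo{u(p)}\mineq\dl(J,K)\,\modulo{p}$. Replacing the integrands $\ttnn{pr}(\partial\sigma_J(p)),\,\ttnn{pr}(\partial\sigma_K(p))$ by $\nabla\sigma_J(p),\,\nabla\sigma_K(p)$ (admissible by the first step) and applying the Gauss--Green formula componentwise on $\bb B$, whose outer unit normal at $p\in\ttnn{bdr }\bb B$ is $p$ itself, gives
\[
S_m(J)-S_m(K)=\frac{1}{\mu(\bb B)}\int_{\bb B}\nabla u(p)\,\mu(dp)=\frac{1}{\mu(\bb B)}\int_{\ttnn{bdr }\bb B}u(p)\,p\;\mathcal H^{m-1}(dp).
\]
Since $\modulo{p}=1$ on $\ttnn{bdr }\bb B$, this yields $\modulo{S_m(J)-S_m(K)}\mineq\dfrac{\mathcal H^{m-1}(\ttnn{bdr }\bb B)}{\mu(\bb B)}\,\dl(J,K)$, and the desired $m$-Lipschitz bound follows from the identity $\mathcal H^{m-1}(\ttnn{bdr }\bb B)=m\,\mu(\bb B)$ valid in $\bb R^m$.

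The step I expect to be the main obstacle is the rigorous justification of the Gauss--Green identity above for the merely Lipschitz (not $C^1$) function $u$: this is supplied by the divergence theorem for $W^{1,1}$ functions on the smooth domain $\bb B$ (obtained by mollification), or, alternatively, by first establishing the estimate when $\sigma_J$ and $\sigma_K$ are smooth and then approximating $J$ and $K$ in the Hausdorff distance by convex bodies with $C^\infty$ support functions and passing to the limit. The other steps are routine bookkeeping with support functions together with the a.e. differentiability of finite convex functions.
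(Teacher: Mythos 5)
Your argument is correct, and it proves strictly more than the paper does: in the text this lemma is simply quoted from Aubin--Frankowska (Theorem 9.4.1 of the cited book), and the only piece the author reproves is the inclusion \rif{S_in_J}, in the remark that follows. For that inclusion the paper's route is to compute, via the conjugate of $\psi_p:=\sigma_J(\,\cdot+p)$, the identity $\partial\sigma_J(p)=\ttnn{arg}\max_{q\in J}\ps{p}{q}\subset J$ and then to invoke the fact that averaging a selection of the constant convex-valued map $J$ over $\bb B$ stays in $J$; you instead integrate the support inequality $\ps{v}{\ttnn{pr}(\partial\sigma_J(p))}\mineq\sigma_J(v)$ and conclude by strict separation. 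The two are essentially equivalent (both rest on $\partial\sigma_J(p)\subset J$ plus convexity of $J$), though your separation step avoids appealing to the set-valued integral identity $\frac{1}{\mu(\bb B)}\int_{\bb B}J\,\mu(dp)=J$, which the paper uses without comment. For the $m$-Lipschitz estimate you reconstruct the classical proof from the cited reference: identify the a.e.\ value of $\ttnn{pr}(\partial\sigma_J(p))$ with $\nabla\sigma_J(p)$, pass to the boundary integral of $\sigma_J-\sigma_K$ by the divergence theorem for Lipschitz ($W^{1,1}$) functions on $\bb B$, use $\modulo{\sigma_J(p)-\sigma_K(p)}\mineq\dl(J,K)\modulo{p}$, and get the constant from $\mathcal H^{m-1}(\ttnn{bdr}\,\bb B)=m\,\mu(\bb B)$; this is exactly the argument the paper delegates to the citation, and your flagging of the Gauss--Green justification as the only delicate point is accurate. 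In short: no gap, same classical machinery, but a self-contained write-up where the paper offers a citation plus a partial remark.
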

	\begin{remark}\rm

		We notice that \rif{S_in_J} follows immediately from the properties of Fenchel transform and the definition of subdifferential. Indeed, fix $J\in \scr C^m_b$ and let $p\in \bb B$. Define $\psi(.)=\psi_p\ccd:=\sigma_J(.+p)$. The function $\psi$ is proper convex. From Lemma \ref{lemma2}-(i), it follows that
		\equazioneref{passo_a}{
			\partial \sigma_J(p)=\partial \psi(0)=\ttnn{arg}\min \psi^*.
		}
		So, $\psi^*(q)=\sup_{y\in \bb R^m} \{\ps{y}{q}-\sigma_J(y+p)\}=-\ps{p}{q}$, if $q\in J$, and $+\infty$ otherwise. From \rif{passo_a}, we have $\partial \sigma_J(p)={\ttnn{arg}\max}_{q\in J}\ps{p}{q}$, and, by arbitrariness of $p$, we conclude $\ttnn{pr}(\partial \sigma_J(p))\in J$ for all $p\in \bb B$. Since $\dfrac{1}{\mu(\bb B)}\int_{\bb B}J\mu(dp)=J$, we get \rif{S_in_J}.
	\end{remark}

	Next, we state the main result of this section on parametrization of convex sets following the main ideas of those discussed in \cite[Chapter 9]{aubin2009set} (cfr. also the literature therein) and providing sharper conditions.

	\begin{theorem}\label{teo_rep_convex_sets}
		Let $I$ be a closed interval of $\bb R^+$ and $Q:I\times \bb R^k \rightsquigarrow \bb R^m$ be a set-valued map such that $Q(t,x)\in \scr C^m$ for all $ (t,x)\in I\times \bb R^k$, $Q(.,x)$ is measurable for all $x\in \bb R^k$, and:
		\enualp{

			\item for all $t\in I$ and any $r>0$ there exists $c_r(t)>0$ satisfying	$Q(t,x)\subset Q(t,y)+ c_r(t) |x-y|\bb B$	for all $x,y\in B(0,r)$.
		}
		
		Then, for any set-valued map $\delta:I\times \bb R^k\rightsquigarrow \bb R^m$, with non-empty closed values and ${\delta(.,x)}$ measurable for all $x\in \bb R^k$, there exist two functions \(\phi : I \times \bb R^k \times \bb R^m \ra \bb {R}^{m}\) and $\eta:I \times \bb R^k \ra (0,+\infty)$ satisfying
		\begin{eqnarray}\label{prop_eta}
		\eta(t,x)=
		\begin{cases}
		\norm{\delta(t,x)} &\ttnn{if }\norm{\delta(t,x)}>0\\
		1 &\ttnn{otherwise},
		\end{cases}
		\end{eqnarray}

		and:
		\enurom{
			\item \(\phi(., x, u)\) and $\eta(.,x)$ are measurable for all $x\in \bb R^k,\,u\in \bb R^m$;
			
			\item for any $t\in I$ and any $r>0$
			\eee{
				&|\phi(t, x, u)-\phi(t, {y}, {v})| \mineq 5m( c_r(t) |x-y|+ |\eta(t,x)u-\eta(t,y){v}|)\\
				&\forall x,y\in B(0,r),\, \forall u,v\in \bb R^m;
			}

			\item $\phi(t,x,\bb B)\subset Q(t,x)$ for all $(t,x)\in I\times \bb R^k$;
			
			\item if $\delta(t,x)\neq \{0\}$ and it is bounded, then $Q(t,x)\cap \delta(t,x) \subset \phi(t,x,\bb B)$.
		}
		
		In particular, if $\delta(.,.)\equiv \bb R^m$, then
		\equazioneref{F_uguale_f_2}{
			Q(t,x)=\phi(t, x, \bb R^m)\quad \forall (t,x)\in I\times \bb R^k.
		}

	\end{theorem}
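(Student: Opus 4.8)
The plan is to build $\phi$ out of the two devices introduced in this section: the projection map $P(u,J)=J\cap B(u,2d(u,J))$ of Lemma~\ref{lemma3} and the Steiner selection $S_m$. First I would record that $P(u,J)\in\scr C^m_b$ for every $u\in\bb R^m$ and $J\in\scr C^m$: it is closed and convex as an intersection of such sets, it sits inside $B(u,2d(u,J))$ hence is bounded, and it is non-empty because the metric projection of $u$ onto $J$ (which exists, $J$ being non-empty closed convex in $\bb R^m$) lies at distance $d(u,J)\le 2d(u,J)$ from $u$. Thus $S_m(P(u,J))$ is well defined and, by~\eqref{S_in_J}, belongs to $P(u,J)\subset J$; moreover $P(u,J)=\{u\}$ whenever $u\in J$, since then $d(u,J)=0$. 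I then define $\eta$ by~\eqref{prop_eta} and set
\[
\phi(t,x,u):=S_m\bigl(P(\eta(t,x)\,u,\;Q(t,x))\bigr),\qquad (t,x,u)\in I\times\bb R^k\times\bb R^m .
\]

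Properties (iii) and (iv) are then almost immediate. For (iii), $\phi(t,x,u)\in P(\eta(t,x)u,Q(t,x))\subset Q(t,x)$ for every $u$, a fortiori for $u\in\bb B$. For (iv), if $\delta(t,x)$ is bounded and $\ne\{0\}$ then $\eta(t,x)=\norm{\delta(t,x)}\in(0,+\infty)$; given $y\in Q(t,x)\cap\delta(t,x)$ one has $|y|\le\norm{\delta(t,x)}=\eta(t,x)$, so $u:=y/\eta(t,x)\in\bb B$ and $\eta(t,x)u=y\in Q(t,x)$, whence $P(\eta(t,x)u,Q(t,x))=\{y\}$ and $\phi(t,x,u)=S_m(\{y\})=y$, i.e. $y\in\phi(t,x,\bb B)$. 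The Lipschitz estimate (ii) chains three facts: $S_m$ is $m$-Lipschitz for the Hausdorff distance and the sets involved lie in $\scr C^m_b$ (where $\dl$ is that distance); Lemma~\ref{lemma3} bounds $\dl\bigl(P(\eta(t,x)u,Q(t,x)),P(\eta(t,y)v,Q(t,y))\bigr)$ by $5\bigl(\dl(Q(t,x),Q(t,y))+|\eta(t,x)u-\eta(t,y)v|\bigr)$; and hypothesis (a), applied with the roles of $x,y$ interchanged, gives $\dl(Q(t,x),Q(t,y))\le c_r(t)|x-y|$ for $x,y\in B(0,r)$. Multiplying out yields the claimed bound for all $u,v\in\bb R^m$.

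The substantive point is the measurability claim (i), which I would handle through Castaing representations. Since $\delta(\cdot,x)$ is measurable with closed values, $t\mapsto\norm{\delta(t,x)}=\sup_i|d_i(t)|$ is measurable for a Castaing representation $(d_i)$ of $\delta(\cdot,x)$; then $\eta(\cdot,x)$, which coincides with the measurable function $\norm{\delta(\cdot,x)}$ on the measurable set $\{\norm{\delta(\cdot,x)}>0\}$ and with the constant $1$ on its complement, is measurable. For $\phi(\cdot,x,u)$ fix $x,u$ and put $w(t):=\eta(t,x)u$, a measurable map; from a Castaing representation $(q_i)$ of $Q(\cdot,x)$ one gets that $t\mapsto d(w(t),Q(t,x))=\inf_i|w(t)-q_i(t)|$ is measurable, hence $t\mapsto B(w(t),2d(w(t),Q(t,x)))$ is a measurable compact-valued multifunction. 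Its intersection with the measurable closed-valued multifunction $Q(\cdot,x)$ is again measurable — one of the two factors being compact-valued; cfr. the standard facts on measurable multifunctions referenced in \cite[Chapter 9]{aubin2009set} — so $t\mapsto P(w(t),Q(t,x))$ is a measurable map into $(\scr C^m_b,\dl)$, and composing with the continuous map $S_m$ shows $\phi(\cdot,x,u)$ is measurable. I expect this bookkeeping, and in particular the measurability of the intersection defining $P$, to be the only genuinely delicate step.

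Finally, for the ``in particular'' statement: when $\delta(\cdot,\cdot)\equiv\bb R^m$ the map $u\mapsto\eta(t,x)u$ is a surjection of $\bb R^m$ onto $\bb R^m$ (equivalently one may just take $\eta\equiv1$ in this case), so for each $q\in Q(t,x)$ there is $u$ with $\eta(t,x)u=q\in Q(t,x)$ and hence $\phi(t,x,u)=S_m(\{q\})=q$; combined with the inclusion $\phi(t,x,\bb R^m)\subset Q(t,x)$ from (iii), this gives~\eqref{F_uguale_f_2}.
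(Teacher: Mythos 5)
Your construction $\phi(t,x,u)=S_m\bigl(P(\eta(t,x)u,Q(t,x))\bigr)$ is exactly the paper's proof: the same composition of the projection map of Lemma~\ref{lemma3} with the Steiner selection, the same Lipschitz chaining for (ii), the same singleton observation $P(w,Q(t,x))=\{w\}$ for $w\in Q(t,x)$ giving (iv) and \rif{F_uguale_f_2}, and the same measurability bookkeeping (your Castaing-representation argument is just the explicit form of the facts the paper cites from \cite[Chapter 8]{aubin2009set}). Your parenthetical choice $\eta\equiv 1$ when $\norm{\delta(t,x)}=+\infty$ also matches how the paper's proof resolves that case, so the proposal is correct and essentially identical in approach.
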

	\begin{proof}

		Assume first that $\delta:I\times \bb R^k\rightsquigarrow \bb R^m$ is the constant set-valued map $\delta(.,.)\equiv \bb R^m$. Next, we prove (i)-(iii). Notice that, from our assumptions, \cite[Theorem 8.2.3]{aubin2009set}, and since the intersection of measurable set-valued maps is measurable, we have that
		\equazioneref{misurabile_P}{
			\forall (x, u)\in \bb R^k\times \bb R^m,\quad t\rightsquigarrow  P(u,Q(t,x)) \ttnn{ is measurable},
		}
		where $P(.,.)$ is the projection map defined in Lemma \ref{lemma3}. Fix $r>0$. From assumption (a) and applying Lemma \ref{lemma3}, for all $t\in I$ and all $x,y\in  B(0,r)$
		\equazioneref{Lip_P}{
			\dl (P(u,Q(t,x)),P(v,Q(t,y)))&\mineq 5(\dl (Q(t,x),Q(t,y))+|u-v|)\\
			&\mineq 5(c_r\rt|x-y|+|u-v|).
		}
		Now, consider the function \(\phi : I \times \bb R^k \times \bb R^m \ra \bb {R}^{m}\) defined by
		
		$
			\phi(t,x,u):=S_m\circ P(u,Q(t,x)).
		$
		
		\noindent Let $t\in I$, $x\in \bb R^k,$ and $u\in \bb R^m$. By \rif{S_in_J} immediately follows that $\phi(t,x,u)\in Q(t,x)$. In particular, (iii) holds. Moreover, let $w\in Q(t,x)$. Since $Q(t,x)\cap B(w,2d(w,Q(t,x)))=\{w\}$, then $\phi(t,x,w)=S_m\circ P(w,Q(t,x))=w$. So, \rif{F_uguale_f_2} is proved. From the $m$-Lipschitz continuity of $S_m\ccd$ and \rif{Lip_P}, it follows that for all $t\in I$,
		\equazioneref{stima_lip_su_phi}{
			|\phi(t,x,u)-\phi(t,y,v)|&\mineq m \dl (P(u,Q(t,x)),P(v,Q(t,y)))\\
			&\mineq 5 m(c_r\rt|x-y|+|u-v|)
		}
		for all $x,y\in B(0,r)$ and all $u,v\in \bb R^m$. Hence, recalling \rif{misurabile_P}, \rif{stima_lip_su_phi}, and the continuity of $S_k\ccd$, (i) and (ii) follows.

		Now, consider a set-valued map  $\delta:I\times \bb R^k\rightsquigarrow \bb R^m$  with non-empty closed values and ${\delta(.,x)}$ measurable for all $x\in \bb R^k$. From \cite[Theorem 8.2.11]{aubin2009set} and since $\delta(.,x)$ is measurable for any $x\in \bb R^k$, we have that the map $t\ra \norm{\delta(t,x)}\in \bb R\cup \{\pm \infty\}$ is measurable for any $x	\in \bb R^k$.  Define, for any $x\in \bb R^k$, the measurable set $\Lambda(x):=\graffe{ t\in I\,|\,\norm{\delta(t,x)}\in \graffe{0,+\infty}}$ and denote for all $(t,x)\in I\times \bb R^k$

		$
			\eta(t,x):=\chi_{\Lambda(x)}(t)+\chi_{\Lambda(x)^c}(t)\cdot\norm{\delta(t,x)},
		$
		
		\noindent where $\chi_V$ is the function that takes values $1$ on the set $V$ and $0$ elsewhere. Then, \rif{prop_eta} holds and, from our assumptions, the map $t\mapsto \eta(t,x)$ is measurable for all $x\in\bb R^k$. Consider the map  \(\phi : I \times \bb R^k \times \bb R^m \ra \bb {R}^{m}\)  defined by
		
		$
			\phi (t,x,u):= S_m\circ P(\eta(t,x)u,Q(t,x)).
		$
		
		\noindent Arguing in the same way as above, the statements \ttnn{(i), (ii)}, and (iii) holds. Next we show (iv). Assume that $\delta(t,x)$ is bounded and $\delta(t,x)\neq \{0\}$. From the definition of $\eta(t,x)$, we have that $\eta(t,x)= \norm{\delta(t,x)}>0$. Now, if $Q(t,x)\cap \delta(t,x) = \emptyset$, then (iv) holds. Otherwise, let $w\in Q(t,x)\cap \delta(t,x)$. Then, there exists $\hat \delta\in [0,\norm{\delta(t,x)}]$ and $|{\hat w}|=1$ satisfying   $w=\hat w\hat \delta$. We have $w=(\hat w\frac{\hat \delta}{\eta(t,x)})\eta(t,x)$. Since $\modulo{\hat w \frac{\hat \delta}{\eta(t,x)}}\mineq 1$, it follows $\phi(t,x,\hat w \frac{\hat \delta}{\eta(t,x)})=w$. Thus, (iv) holds.
	\qed\end{proof}
	\begin{remark}\label{remark_stima_H_star_ell}\rm
		Let $(t,x)\in I\times \bb R^n$. From the proof of Theorem \ref{teo_rep_convex_sets}, it follows that $\dl(Q(t,x)\cap \delta(t,x), \phi(t,x,\bb B))\mineq 10 m\norm{\delta(t,x)}$. Indeed, for any $\gamma\in Q(t,x)\cap \delta(t,x)$ and any $\theta\in \phi(t,x,\bb B)$, we have
		
			$
	|\gamma-\theta|=|\gamma-S_m\circ P(\norm{\delta(t,x)}u,Q(t,x))|
	$
	
	$	
	\quad \quad \quad\hspace{0.35mm}=|S_m\circ P(\gamma,Q(t,x))-S_m\circ P(\norm{\delta(t,x)}u,Q(t,x))|
	$
	
	$
	\quad \quad \quad\hspace{0.35mm}\mineq 5m |\gamma-\norm{\delta(t,x)}u|\mineq 10 m\norm{\delta(t,x)}.
	$
	\end{remark}
	
	From Theorem \ref{teo_rep_convex_sets} we get the following corollary:
	\begin{corollary}
		Assume the assumptions of Theorem \ref{teo_rep_convex_sets} and that for all $x\in \bb R^k$ there exists $r(.,x):I \ra  (0,+\infty)$ measurable such that $Q(t,x)\subset r(t,x)\bb B$ for all $(t,x)\in I\times \bb R^k$. 
		
		Then there exists a function \(\phi : I \times \bb R^k \times \bb R^m \ra \bb {R}^{m}\) such that
		\eee{
			Q(t,x)=\phi(t, x, \bb B)  \quad \forall (t,x)\in I\times \bb R^k,
		}
		and:
		\enurom{
			
			\item \(\phi(., x, u)\) is measurable for all $x\in \bb R^k,\,u\in \bb R^m$;
			
			\item for any $t\in I$ and any $r>0$
			\eee{
				&|\phi(t, x, u)-\phi(t, {y}, {v})| \mineq 5m( c_r(t) |x-y|+ | r(t,x)u- r(t,y){v}|)\\
				&\forall \,x,y\in B(0,r),\,\forall u,v\in \bb R^m.
			}
			
		}
	\end{corollary}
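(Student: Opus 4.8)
The plan is to deduce the corollary from Theorem~\ref{teo_rep_convex_sets} by feeding it a well-chosen auxiliary set-valued map $\delta$. The natural choice is the ball-valued map
\[
\delta(t,x):=B(0,r(t,x))=r(t,x)\bb B,\qquad (t,x)\in I\times \bb R^k .
\]
This map has non-empty, closed (indeed compact) convex values; since $r(t,x)>0$ we have $\delta(t,x)\neq \graffe{0}$ and $\norm{\delta(t,x)}=r(t,x)\in(0,+\infty)$ for every $(t,x)$. Moreover, for fixed $x$ the map $t\rightsquigarrow \delta(t,x)$ is measurable: for any $y\in \bb R^m$ the function $t\mapsto d(y,B(0,r(t,x)))=\max\graffe{|y|-r(t,x),\,0}$ is measurable because $r(.,x)$ is, and this is precisely measurability of the set-valued map in the sense used here. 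Hence $\delta$ satisfies all the requirements imposed on the auxiliary map in the statement of Theorem~\ref{teo_rep_convex_sets}, while the standing hypothesis ``Assume the assumptions of Theorem~\ref{teo_rep_convex_sets}'' supplies the measurability of $Q(.,x)$, the convexity of the values, and assumption (a) with the same constants $c_r(t)$.

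Next I would apply Theorem~\ref{teo_rep_convex_sets} with this $\delta$, obtaining functions $\phi$ and $\eta$ with the listed properties. Because $\norm{\delta(t,x)}=r(t,x)>0$ everywhere, formula \rif{prop_eta} forces $\eta(t,x)=r(t,x)$ for all $(t,x)\in I\times \bb R^k$. Substituting this identity, properties (i) and (ii) of the theorem become, word for word, properties (i) and (ii) claimed in the corollary (the $c_r(t)|x-y|$ term is unchanged, and $|\eta(t,x)u-\eta(t,y)v|$ turns into $|r(t,x)u-r(t,y)v|$).

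It then remains only to verify the set identity $Q(t,x)=\phi(t,x,\bb B)$. Fix $(t,x)\in I\times \bb R^k$. By hypothesis $Q(t,x)\subset r(t,x)\bb B=\delta(t,x)$, so $Q(t,x)\cap \delta(t,x)=Q(t,x)$. Since $\delta(t,x)$ is bounded and $\delta(t,x)\neq \graffe{0}$, property (iv) of Theorem~\ref{teo_rep_convex_sets} yields $Q(t,x)=Q(t,x)\cap \delta(t,x)\subset \phi(t,x,\bb B)$, whereas property (iii) gives the reverse inclusion $\phi(t,x,\bb B)\subset Q(t,x)$; hence $Q(t,x)=\phi(t,x,\bb B)$, and the proof is complete. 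No step poses a genuine obstacle; the only point requiring (routine) care is the measurability of $t\rightsquigarrow B(0,r(t,x))$, which, as indicated above, follows at once from the measurability of $r(.,x)$ through the distance-function criterion.
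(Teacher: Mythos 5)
Your proof is correct and takes exactly the paper's route: the paper likewise deduces the corollary from Theorem \ref{teo_rep_convex_sets} by choosing $\delta(.,.)=r(.,.)\,\bb B$, and your additional details (measurability of $t\rightsquigarrow r(t,x)\bb B$, the identification $\eta=r$, and the use of (iii)--(iv) with $Q(t,x)\cap\delta(t,x)=Q(t,x)$) are a faithful, correct expansion of that one-line argument.
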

	\begin{proof}
		All the conclusions follows from Theorem \ref{teo_rep_convex_sets} by choosing $\delta(.,.)=r(.,.)\bb B$.
	\qed\end{proof}

	\section{{R{epresentation of convex Hamiltonians}}}

	Let $I$ be a closed interval of $\bb R^+$ and $H:I\times \bb R^n\times \bb R^n\ra\bb R$ be a function such that $t\mapsto H(t,x,p)$ is measurable for any $x,p\in \bb R^n$. We consider the following conditions on $H$:
	
	\margin{\hspace{-6mm}\ttnn{H.1.1}. $p\mapsto H(t,x,p)$ is convex for all $t\in I$ and $x\in \bb R^n$;}
	
	\margin{\hspace{-6mm}\ttnn{H.1.2}. for all $r>0$ there exists $C_r:I\ra \bb R^+$ measurable such that
		
		\vspace{2mm}
		
		$|H(t,x,p)-H(t,y,p)|\mineq  C_r(t)(1+|p|)|x-y|$
		
		\vspace{2mm}
		
		for all $t\in I$, $x,y\in B(0,r)$, and $p\in \bb R^n$;}

	\margin{\hspace{-6mm}\ttnn{H.1.3}. there exists  $\tilde c:I\ra \bb R^+$  measurable such that
		
		\vspace{2mm}
		
		$	|H(t,x,p)-H(t,x,q)|\mineq \tilde c(t)(1+|x|)|p-q|$
		
		\vspace{2mm}
		
		\indent for all $t\in I$ and $x,p,q\in \bb R^n$.
	}

		\noindent In the following, for any $(t,x)\in I\times \bb R^n$, we denote by $H^*(t,x,.):\bb R^n\ra \bb R\cup \{\pm \infty\}$ the Fenchel transform of the function $H(t,x,.)$, we define the set-valued maps $D:I\times \bb R^n\rightsquigarrow \bb R^n$, $Ep:I\times \bb R^n\rightsquigarrow \bb R^{n+1}$, and $Gr:I\times \bb R^n\rightsquigarrow \bb R^{n+1}$ respectively by
	
	$
		D(t,x):=\ttnn{dom } H^*(t,x,.),
	$
	
	$
		Ep(t,x):=\ttnn{epi }H^*(t,x,.),
	$
	
	$
		Gr(t,x):=\ttnn{graph }H^*(t,x,.),
	$
	
	\noindent and we put $\gamma(t,x):=\max\; \{ \, 0\, ,\,  \sup_{q\in D(t,x)} \,H^*(t,x,q)  \} \in \bb R^+\cup \{+ \infty\}$.
	
	We also consider the following condition on the Hamiltonian:

	\margin{\hspace{-6mm}\ttnn{H.1.4}. $\forall(t,x) \in I \times \bb {R}^{n},\; \forall \bar q\in \ttnn{cl }D(t,x),\; \exists \eps>0:$
		
		\vspace{2mm}
		
		$\sup_{q\in D(t,x)\cap B(\bar q,\eps)} \,H^*(t,x,q)<\infty.$}

	\begin{theorem}[Representation]\label{theo_rep_H}
		Assume \ttnn{H.1.1-2} and \ttnn{H.1.4}.
		
		Then there exists a function \(\phi : I \times \bb R^n \times \bb R^{n+1} \ra \bb {R}^{n+1}\),
		\eee{
			\phi(t,x,u)=:(f(t,x,u),\ell (t,x,u))\in \bb R^n\times \bb R,
		}

		satisfying:
		\enurom{
			\item \(f(., x, u)\) and \(\ell(., x, u)\) are measurable for all $x\in \bb R^n$ and $u\in \bb R^{n+1}$;
			\item for all $t\in I$ and $x,p\in \bb R^n$,
			\eee{
				H(t,x,p)=\sup_{u\in \bb R^{n+1}}\,\{\ps{p}{f(t,x,u)}-\ell(t,x,u)\};
			}		
			
			\item for all $t\in I$ and $r>0$,
			\eee{
				&|f(t, x, u)-f(t, {y}, {v})|\mineq 5(n+1)( C_r(t) |x-y|+ |u-{v}|),\\
				&|\ell(t, x, u)-\ell(t, {y}, {v})| \mineq 5(n+1)     ( C_r(t) |x-y|+ |u-{v}|)\\
				&\forall\, x,y\in B(0,r) ,\,\forall u,v\in \bb R^{n+1}.
			}

		}
		
		If, in addition, condition \ttnn{H.1.3} holds, then the statements \ttnn{(ii)}-\ttnn{(iii)} are replaced by the following:
		\enurom{
			\item[\ttnn{(ii)}$'$]  for all $t\in I$ and $x,p\in \bb R^n$,
			\eee{
				H(t,x,p)=\sup_{u\in \bb B}\,\{\ps{p}{f(t,x,u)}-\ell(t,x,u)\};
			}		
			\item[\ttnn{(iii)}$'$]  for all $t\in I$ and $r>0$,
			\eee{
				&|f(t, x, u)-f(t, {y}, {v})|\mineq 5(n+1)( C_r(t) |x-y|+ |\eta(t,x)u-\eta(t,y){v}|),\\
				&|\ell(t, x, u)-\ell(t, {y}, {v})| \mineq 5(n+1)     ( C_r(t) |x-y|+ |\eta(t,x)u-\eta(t,y){v}|)\\
				&\forall x,y\in B(0,r)   ,\,\forall u,v\in \bb R^{n+1},
			}
			
			where $\eta(t,.):=\tilde c(t)(1+|.|)+\gamma(t,.)+|H(t,.,0)|$;
		}
		and moreover:
		\enurom{	
			\item[\ttnn{(iv)}$'$] $D(t,x)=f(t,x,\bb B)$ for all $t\in I$ and $x\in \bb R^n$;

			\item[\ttnn{(v)}$'$]  $Gr(t,x)\subset \phi(t,x,\bb B)$ for all $t\in I$ and $x\in \bb R^n$.
		}

	\end{theorem}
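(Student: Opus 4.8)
The plan is to realize $H$ by applying the parametrization result Theorem~\ref{teo_rep_convex_sets} to the epigraph multifunction $Ep$ of the fiber conjugate $H^*$, with a different choice of the auxiliary multifunction $\delta$ in each of the two regimes. I would first record the elementary facts about $H^*$. Since $H(t,x,\cdot)$ is finite and convex, hence continuous, Lemma~\ref{lemma1} gives that $H^*(t,x,\cdot)$ is proper, lower semicontinuous, convex and $(H^*(t,x,\cdot))^*=H(t,x,\cdot)$; in particular $Ep(t,x)\in\scr C^{n+1}$ and
\[
H(t,x,p)=\sup_{(q,s)\in Ep(t,x)}\{\ps{p}{q}-s\}=\sup_{q\in D(t,x)}\{\ps{p}{q}-H^*(t,x,q)\}.
\]
Writing $H^*(t,x,q)=\sup_{p\in\bb Q^n}\{\ps{q}{p}-H(t,x,p)\}$ shows that $(t,q)\mapsto H^*(t,x,q)$ is, for each fixed $x$, a normal integrand, so $Ep(.,x)$ (and, by measurable selection arguments, $\delta(.,x)$ below) is measurable. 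Finally I record the role of H.1.4: if $\bar q\in\cl D(t,x)\setminus D(t,x)$ then lower semicontinuity of $H^*(t,x,\cdot)$ would force $H^*(t,x,q_k)\to+\infty$ along any $q_k\to\bar q$ in $D(t,x)$, contradicting H.1.4; hence $D(t,x)$ is closed, $H^*(t,x,\cdot)$ (convex, lower semicontinuous, locally bounded above on its domain) is continuous on $D(t,x)$, and — in the presence of H.1.3, where $D(t,x)$ is bounded — $\gamma(t,x)<\infty$ by a finite subcover of $\cl D(t,x)$.

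The decisive step is to check hypothesis (a) of Theorem~\ref{teo_rep_convex_sets} for $Q=Ep$, i.e. $Ep(t,x)\subseteq Ep(t,y)+c_r(t)|x-y|\bb B$ for $x,y\in B(0,r)$. Both sets being closed and convex, this is equivalent to $\sigma_{Ep(t,x)}(\xi)\le\sigma_{Ep(t,y)}(\xi)+c_r(t)|x-y||\xi|$ for all $\xi$. Computing the support function one finds, for $\xi=(\xi_1,\xi_2)\in\bb R^n\times\bb R$, that $\sigma_{Ep(t,x)}(\xi)=+\infty$ if $\xi_2>0$, equals $\sigma_{D(t,x)}(\xi_1)$ if $\xi_2=0$, and equals $(-\xi_2)H(t,x,\xi_1/(-\xi_2))$ if $\xi_2<0$; on $\{\xi_2<0\}$ the required inequality, after dividing by $-\xi_2$, reduces to the modulus estimate H.1.2 for $H(t,\cdot,p)$, and the case $\xi_2=0$ follows by letting $\xi_2\uparrow0$ (using that the recession function of $H(t,x,\cdot)$ is $\sigma_{\cl D(t,x)}$). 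Thus $Q=Ep$ fulfils the hypotheses of Theorem~\ref{teo_rep_convex_sets}.

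For (i)--(iii) I would apply Theorem~\ref{teo_rep_convex_sets} with $Q=Ep$ and $\delta\equiv\bb R^{n+1}$: it produces $\phi=(f,\ell)$ with the measurability (i), the Lipschitz bound (iii) (case $m=n+1$, $\eta\equiv1$), and $Ep(t,x)=\phi(t,x,\bb R^{n+1})$, so that (ii) is the displayed formula for $H$. Assume in addition H.1.3. Then $H(t,x,\cdot)$ is $\tilde c(t)(1+|x|)$-Lipschitz, so $D(t,x)\subseteq\tilde c(t)(1+|x|)\bb B$, and together with $-H(t,x,0)\le H^*(t,x,q)\le\gamma(t,x)<\infty$ one gets $Gr(t,x)\subseteq\eta(t,x)\overline{\bb B}$ with $\eta(t,x)=\tilde c(t)(1+|x|)+\gamma(t,x)+|H(t,x,0)|$. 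I would then apply Theorem~\ref{teo_rep_convex_sets} with $Q=Ep$ and $\delta(t,x):=\eta(t,x)\overline{\bb B}$ (the degenerate slices where $\eta(t,\cdot)\equiv0$, i.e. $H(t,\cdot,\cdot)\equiv0$, treated by hand): conclusions (iii)--(iv) of that theorem give
\[
Gr(t,x)\subseteq Ep(t,x)\cap\delta(t,x)\subseteq\phi(t,x,\bb B)\subseteq Ep(t,x),
\]
and by \rif{prop_eta} the weight it produces is precisely this $\eta$. Projecting onto $\bb R^n$ yields (iv)$'$; the sandwich above yields (v)$'$ and, since $\sup\{\ps{p}{q}-s\}$ over each of $Gr(t,x)$ and $Ep(t,x)$ equals $H(t,x,p)$, also (ii)$'$; finally (iii)$'$ is estimate (ii) of Theorem~\ref{teo_rep_convex_sets} with this weight.

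I expect the genuine obstacle to be the verification of hypothesis (a): converting the pointwise modulus H.1.2 into uniform (Hausdorff-type) closeness of the epigraphs of the conjugates with the correct constant, while simultaneously keeping the measurability of $Ep$, $Gr$ and $\gamma(.,x)$ under control. Everything else is bookkeeping, the one real design choice being $\delta=\eta\,\overline{\bb B}$, made so that \rif{prop_eta} reproduces exactly the weight $\eta(t,\cdot)=\tilde c(t)(1+|\cdot|)+\gamma(t,\cdot)+|H(t,\cdot,0)|$ in (iii)$'$.
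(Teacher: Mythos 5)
Your proposal is correct in substance and shares the paper's skeleton: both realize $H$ by applying Theorem \ref{teo_rep_convex_sets} to $Q=Ep$, with $\delta\equiv\bb R^{n+1}$ for (i)--(iii) and a bounded $\delta$ under H.1.3, and both derive (ii), (ii)$'$, (iv)$'$, (v)$'$ from the sandwich $Gr(t,x)\subset\phi(t,x,\bb B)\subset Ep(t,x)$ exactly as you do. Where you genuinely diverge is the verification of hypothesis (a) for $Ep$: the paper does this in two steps --- Lemma \ref{D_Lip} (closedness and $C_r(t)$-Lipschitz dependence of $D(t,\cdot)$, proved by a proximal-normal contradiction argument through Lemma \ref{lemma2}-(ii) and conjugacy) and Lemma \ref{lemma_lip_Ep} (an inf-convolution/Fenchel computation yielding $w\in B(q,C_r(t)|x-y|)$ with $H^*(t,y,w)\le H^*(t,x,q)+C_r(t)|x-y|$) --- whereas you obtain it in one stroke from the support-function characterization of the inclusion, computing $\sigma_{Ep(t,x)}$ explicitly and handling the face $\xi_{n+1}=0$ via the recession identity $H(t,x,\cdot)^\infty=\sigma_{\cl D(t,x)}$. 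This is shorter and makes transparent that H.1.4 enters only through closedness of $D(t,x)$ and finiteness of $\gamma(t,x)$, not through the Lipschitz transport of the epigraphs. Your second deviation, taking $\delta=\eta(t,x)\,\overline{\bb B}$ instead of the paper's $\delta=Gr$, is also sound, and via \rif{prop_eta} it returns exactly the weight $\eta$ stated in (iii)$'$, whereas the paper's choice produces $\norm{Gr(t,x)}$ (and Proposition \ref{prop_function_phi} states yet another $\eta$), so on this point your route is, if anything, more literal.

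Two small caveats. First, dividing the $\xi_{n+1}<0$ inequality by $-\xi_{n+1}$ gives the bound $C_r(t)(|\xi_1|+|\xi_{n+1}|)|x-y|$, i.e. hypothesis (a) with the Euclidean ball only up to a factor $\sqrt2$; this is the same slack the paper hides in its anisotropic set $\bb B\times[-1,1]$ in Lemma \ref{lemma_lip_Ep}, so your constants in (iii), (iii)$'$ pick up a harmless numerical factor. Second, your parenthetical claim that under H.1.4 the function $H^*(t,x,\cdot)$ is continuous relative to $D(t,x)$ is not true in general (a lower semicontinuous convex function that is locally bounded above on its domain can fail to be upper semicontinuous relative to the domain at boundary points); however, you never use this: closedness of $D(t,x)$ and the finite-subcover bound giving $\gamma(t,x)<+\infty$ under H.1.3 are all your argument actually requires.
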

	
	Before to give a proof of Theorem \ref{theo_rep_H}, we show some intermediate results.

	\begin{lemma}\label{lemma1_sec_hamil}
		Assume  \ttnn{H.1.1-2}  and let \((t, x) \in I \times \bb {R}^{n}\).
		
		Then:
		\enurom{
			\item ${D(t, x)}$ is non-empty and convex.
		}
		Moreover, if, in addition, the condition  \ttnn{H.1.3} holds, then:
		\enurom{
			\item[\ttnn{(ii)}] $D(t,x)\subset \tilde c(t)(1+|x|) \bb B.$
		}
		
	\end{lemma}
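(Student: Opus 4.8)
The plan is to deduce (i) directly from the Fenchel duality facts recorded in Lemma~\ref{lemma1}, and to obtain (ii) by turning the momentum-Lipschitz bound \ttnn{H.1.3} into a linear upper bound for the map $p\mapsto H(t,x,p)$.

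First I would fix $(t,x)\in I\times\bb R^n$ and check that $\varphi:=H(t,x,\cdot)$ satisfies the hypotheses of Lemma~\ref{lemma1}. By \ttnn{H.1.1} it is convex, and by the standing assumption on $H$ it is everywhere finite; a finite convex function on $\bb R^n$ is automatically continuous, hence lower semicontinuous, and it is proper since $\ttnn{dom}\,\varphi=\bb R^n\neq\emptyset$ and $\varphi$ never takes the value $-\infty$. Lemma~\ref{lemma1} then yields that $H^*(t,x,\cdot)=\varphi^*$ is proper, lower semicontinuous and convex, with convex domain; in particular $D(t,x)=\ttnn{dom}\,H^*(t,x,\cdot)$ is non-empty (by properness) and convex, which is (i). (Note that \ttnn{H.1.2} is not actually needed for this part.)

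For (ii) I would additionally invoke \ttnn{H.1.3}. Applying it to the pair $p$ and $0$ gives $H(t,x,p)\mineq H(t,x,0)+\tilde c(t)(1+|x|)|p|$ for all $p\in\bb R^n$. Now take any $q\in\bb R^n\setminus\{0\}$ and test the definition of $H^*(t,x,q)$ along the half-line $p=sq/|q|$, $s>0$:
\begin{eqnarray*}
H^*(t,x,q)&\mageq& \ps{q}{p}-H(t,x,p)\ \mageq\ s|q|-H(t,x,0)-s\,\tilde c(t)(1+|x|)\\
&=& s\bigl(|q|-\tilde c(t)(1+|x|)\bigr)-H(t,x,0).
\end{eqnarray*}
If $|q|>\tilde c(t)(1+|x|)$, letting $s\ra+\infty$ forces $H^*(t,x,q)=+\infty$, i.e. $q\notin D(t,x)$. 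Since the case $q=0$ is trivial, every element of $D(t,x)$ has norm at most $\tilde c(t)(1+|x|)$, that is $D(t,x)\subset\tilde c(t)(1+|x|)\bb B$.

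I do not expect a genuine obstacle here: part (i) is essentially a citation of Lemma~\ref{lemma1}, the only subtlety being the routine verification that finiteness together with convexity forces properness and lower semicontinuity, so that the lemma is applicable; part (ii) is the standard remark that linear growth of the Hamiltonian in the momentum variable forces a bounded effective domain of its conjugate, and the one-line estimate above is all the computation the argument needs.
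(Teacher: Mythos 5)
Your proposal is correct and follows essentially the same route as the paper: part (i) is exactly the paper's appeal to Lemma~\ref{lemma1} (you merely make explicit the routine check that a finite convex $H(t,x,\cdot)$ is proper and lower semicontinuous, and that \ttnn{H.1.2} is not needed there), and for part (ii) the paper simply refers to \cite{frankowskasedrak2014stablerephami}, while your half-line conjugate estimate is precisely the standard argument behind that citation.
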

	\begin{proof}
		The first claim follows immediately from Lemma \ref{lemma1}. The proof of statement (ii) follows in the same way as in \cite{frankowskasedrak2014stablerephami}.	
			\qed\end{proof}

	\begin{lemma}\label{D_Lip}
		Assume \ttnn{H.1.1-2} and \ttnn{H.1.4}.

		Then:	
		\enurom{
			\item \(D(t, x)\in \scr C^n\);

			\item for all $t\in I$ and $r>0$,
			\eee{
				D(t,x)\subset D(t,y)+C_r\rt|x-y|\bb B\quad \forall x,y\in B(0,r).
			}
		}
	\end{lemma}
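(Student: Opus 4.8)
The plan is to treat the two assertions separately. For (i), Lemma~\ref{lemma1_sec_hamil}(i) already provides that $D(t,x)$ is non-empty and convex, so the only genuinely new content is that $D(t,x)$ is \emph{closed}; this is exactly where \ttnn{H.1.4} enters (it is not automatic, since the effective domain of the conjugate of a finite convex function need not be closed). I would fix $\bar q\in\ttnn{cl}\,D(t,x)$, invoke \ttnn{H.1.4} to obtain $\eps>0$ with $M:=\sup_{q\in D(t,x)\cap B(\bar q,\eps)}H^*(t,x,q)<\infty$, and pick $q_i\in D(t,x)$ with $q_i\ra\bar q$; then $H^*(t,x,q_i)\le M$ for all large $i$, so by lower semicontinuity of $H^*(t,x,.)$ (Lemma~\ref{lemma1}) one gets $H^*(t,x,\bar q)\le M<\infty$, i.e.\ $\bar q\in D(t,x)$. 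Combined with Lemma~\ref{lemma1_sec_hamil}(i), this gives $D(t,x)\in\scr C^n$.

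For (ii), fix $t\in I$, $r>0$, $x,y\in B(0,r)$ and set $L:=C_r(t)|x-y|$. The aim is to prove $d(q,D(t,y))\le L$ for every $q\in D(t,x)$; since $D(t,y)$ is closed by part~(i) and $\bb B$ is the closed unit ball, this is precisely the inclusion $D(t,x)\subset D(t,y)+L\bb B$. Given $q\in D(t,x)$ there is $c\in\bb R$ with $H(t,x,p)\ge\ps{q}{p}-c$ for all $p$; inserting this into \ttnn{H.1.2} yields $H(t,y,p)+L|p|\ge\ps{q}{p}-(c+L)$ for all $p\in\bb R^n$. I would then consider $\Phi:=H(t,y,.)+L|\,.\,|$, which is finite and convex (by \ttnn{H.1.1}), hence proper and lower semicontinuous; since the conjugate of $L|\,.\,|$ is the indicator function of $L\bb B$ and the conjugate of a sum of two finite-valued convex functions equals the inf-convolution of their conjugates, $\Phi^*(v)=\inf_{v'\in L\bb B}H^*(t,y,v-v')$. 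The displayed inequality says exactly $\Phi^*(q)\le c+L<\infty$, so that infimum is finite and there is $v'\in L\bb B$ with $H^*(t,y,q-v')<\infty$, i.e.\ $q-v'\in D(t,y)$, whence $d(q,D(t,y))\le|v'|\le L$.

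The delicate point is the closedness in (i): non-emptiness and convexity are free from Lemma~\ref{lemma1_sec_hamil}, but closedness genuinely requires \ttnn{H.1.4}, used precisely in the form above. Part (ii) is then routine once the two standard conjugacy facts are recalled (conjugate of a norm $=$ indicator of the dual ball; conjugate of a sum $=$ inf-convolution when one summand is finite-valued; see \cite{rockafellar2009variational}). Should one wish to avoid inf-convolution, the same conclusion can be reached by separating $\ttnn{epi}\,(H(t,y,.)-\ps{q}{.})$ from the hypograph of the concave function $p\mapsto -(c+L)-L|p|$: this produces an affine minorant $p\mapsto\ps{q'}{p}+\mathrm{const}$ of $H(t,y,.)-\ps{q}{.}$, and the minorant inequality forces $|q'|\le L$, so that $q+q'\in D(t,y)$ with $|q+q'-q|=|q'|\le L$.
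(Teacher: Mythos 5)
Your proof is correct. Part (i) coincides with the paper's argument: closedness of $D(t,x)$ via \ttnn{H.1.4} plus lower semicontinuity of $H^*(t,x,.)$, combined with Lemma \ref{lemma1_sec_hamil}. Part (ii), however, takes a genuinely different route. The paper proves (ii) by contradiction with nonsmooth-analysis machinery: it projects $w\in D(t,x)$ onto $D(t,y)$, produces a proximal (horizontal) normal $(z,0)\in N_{\ttnn{epi}\,H^*(t,y,.)}(\bar q,H^*(t,y,\bar q))$, approximates it by non-horizontal normals via Lemma \ref{lemma2}-(ii), converts these into subgradients and uses the Fenchel equality of Lemma \ref{lemma1} together with \ttnn{H.1.2}, and finally invokes \ttnn{H.1.4} once more to bound $H^*(t,y,w_i)$ before passing to the limit. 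Your argument is direct: from $H^*(t,x,q)<\infty$ and \ttnn{H.1.2} you get the affine minorant $H(t,y,p)+L|p|\mageq \ps{q}{p}-(c+L)$ with $L=C_r(t)|x-y|$, and then either the conjugate-of-a-sum/inf-convolution identity for the two finite-valued convex functions $H(t,y,.)$ and $L|\,.\,|$, or the sandwich/separation variant you sketch, yields $v'\in L\bb B$ with $q-v'\in D(t,y)$. Both versions of your key step are sound (the qualification for exact inf-convolution is automatic since both summands are finite on all of $\bb R^n$, and in any case mere finiteness of the infimum suffices; the separation variant correctly forces $|q'|\mineq L$ by letting $p\to\infty$ along $-q'$). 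What your route buys: it is shorter, it avoids Lemma \ref{lemma2}-(ii) and the proximal-normal construction entirely, and it does not use \ttnn{H.1.4} in part (ii) at all (the paper needs it there to bound $H^*(t,y,w_i)$), so the Lipschitz estimate on $D(t,\cdot)$ is seen to follow from \ttnn{H.1.1-2} alone. It is also worth noting that your conjugacy computation is essentially the same device the paper itself deploys later, in the proof of Lemma \ref{lemma_lip_Ep}, where $H^*(t,y,.)$ is inf-convolved with the function $h$ supported on $B(0,C_r(t)|x-y|)$; so your proof in effect unifies the two lemmas under one technique, whereas the paper's proof of Lemma \ref{D_Lip}-(ii) stays within the normal-cone framework set up in Section \ref{sec_preliminaries}.
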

	\begin{proof}
		Let \((t,x) \in I \times \bb{R}^{n}\). Next we show that $D(t,x)$ is closed. Consider a sequence $q_i\in D(t,x)$ converging to $\tilde q\in \bb R^n$. Since $\tilde q\in \ttnn{cl }D(t,x)$, from assumption \ttnn{H.1.4}, there exist $\eps>0$ and $M>0$ such that $|H^*(t,x,q_i)|\mineq M$ for all $q_i\in B(\tilde q,\eps)$. Hence, since the Fenchel transform $q\mapsto H^*(t,x,q)$ is lower semicontinuous (cfr. Lemma \ref{lemma1}), $M\mageq \liminf_{i\ra +\infty} H^*(t,x,q_i)\mageq H^*(t,x,\tilde q)$. So, $\tilde q\in D(t,x)$, and recalling Lemma \ref{lemma1_sec_hamil}, the assertion (i) is  proved.
		
		Now, to show (ii), suppose by contradiction that there exist $t\in I$, $r>0$, $x,y\in B(0,r)$, $w\in D(t,x)$, and $\eta>C_r(t)$ such that
		
		$
			D(t,y)\cap B(w,\eta|x-y|)= \emptyset.
		$
		
		\noindent We divide the proof into three steps.
		
		{S\textsc{tep 1}}: Applying Lemma \ref{lemma1_sec_hamil} and \ttnn{(i)}, the set $D(t,x)$ is closed and convex. Let $\bar q\in D(t,y)$ be the projection of $w$ onto $D(t,y)$ and put $z:=(w-\bar q)/|w-q|$. We have that $z$ is a proximal normal to $D(t,y)$ at $\bar q$, i.e., there exists $\bar \lambda>\eta |x-y|$ such that $d_{D(t,y)}(\bar q+\bar \lambda z)=\bar \lambda$. Consider the hyperplane $\{\xi\in \bb R^n\,|\,\ps{z}{\xi}= \ps{z}{\bar q}\}$. Since $D(t,y)$ is convex and $z$ is a proximal normal, we have that $D(t,y)\subset \{\xi\in \bb R^n\,|\,\ps{z}{\xi}\mineq \ps{z}{\bar q}\}$.  Moreover, from \rif{B_subset_D_complement}, $B(w,\eta|x-y|)\subset \ttnn{int }B(\bar q+\bar \lambda z,\bar \lambda)\subset D(t,y)^c$, and we get
		\equazioneref{Hahn_Banach}{
			\ps{z}{q}\mineq \ps{z}{\bar q}< \ps{z}{w+\eta|x-y|h} \quad \forall q\in D(t,y),\,\forall h\in \bb B.
		}
		Notice that, applying \cite[Proposition 4.2.9]{vinter00}, $z\in N_{D(t,y)}(\bar q)$. Hence, using \rif{cono_normale_per_convessi}, we have $(z,0)\in N_{\ttnn{epi }H^*(t,y,.)}(\bar q, H^*(t,y,\bar q))$.

		{S\textsc{tep 2}}:  From Step 1 and applying Lemma \ref{lemma2}-(ii), consider two sequences $w_i\in \ttnn{dom }H^*(t,y,.)$ and $(p_i,q_i)\in N_{\ttnn{epi }H^*(t,y,.)}(w_i,H^*(t,y,w_i))$, with $q_i<0$, satisfying
		\equazioneref{convergenze}{
			(p_i,q_i)\ra (z,0),\quad(w_i,H^*(t,y,w_i))\ra (\bar q, H^*(t,y,\bar q)).
		}
		So, $(p_i/|q_i|,-1)\in N_{\ttnn{epi }H^*(t,y,.)}(w_i,H^*(t,y,w_i))$ for all $i\in \bb N$. We conclude that $p_i/|q_i|\in \partial H^*(t,y,.)(w_i)$ for all $i\in \bb N$. Thus, from Lemma \ref{lemma1},  
		\equazioneref{H_piu_H_star_uguale_prod_scalare}{
			H(t,y,p_i/|q_i|)+H^*(t,y,w_i)=\ps{w_i}{p_i/|q_i|}\quad \forall i\in \bb N.
		}
		
		{S\textsc{tep 3}}:  Using \rif{convergenze} and \rif{Hahn_Banach} with $h=-z/|z|$, we can assume that $\ps{w_i}{p_i}< \ps{w}{p_i} -\eta|x-y|$ for all large $i\in \bb N$.
		Hence, from assumption {H.1.2} and recalling that $w\in D(t,x)$, we get for all large $i\in \bb N$
		
		$\ps{w_i}{p_i} - \left| q _ { i } \right| H ( t , y , { p _ { i } } /{ \left| q _ { i } \right| })$
		
		$< \ps{w}{p_i} -\eta|x-y| - |q_i|H(t,x,p_i/|q_i|)+(|q_i|+|p_i|)C_R(t)|x-y|$
		
		$=|q_i|(\ps{w}{p_i/|q_i|}-H(t,x,p_i/|q_i|))+((|q_i|+|p_i|)C_r(t)-\eta)|x-y|$
		
		$\mineq |q_i|H^*(t,x,w)+((|q_i|+|p_i|)C_r(t)-\eta)|x-y|.$

		\noindent So, by \rif{H_piu_H_star_uguale_prod_scalare}, for all large $i\in \bb N$
		\equazioneref{pre_conclusione}{
			|q_i|H^*(t,y,w_i)<|q_i|H^*(t,x,w)+((|q_i|+|p_i|)C_r(t)-\eta)|x-y|.
		}
		From assumption \ttnn{H.1.4}, the lower semicontinuity of $H^*(t,y,.)$, and since $w_i\ra \bar q$, the sequence $\{H^*(t,y,w_i)\}_{i\in \bb N}$ is bounded. Then, using again \rif{convergenze}, and passing to the lower limit as $i\ra +\infty$ in \rif{pre_conclusione} we get $0\mineq (|z|C_r-\eta)|x-y|$. Since $|z|=1$, $0\mineq C_r\rt-\eta$, and a contradiction follows.
	\qed\end{proof}

	\begin{lemma}\label{lemma_lip_Ep}
		Assume \ttnn{H.1.1-2} and \ttnn{H.1.4}.
		
		Then:
		\enurom{
			\item $t\rightsquigarrow Ep(t,x)$ is measurable for all $x\in \bb R^n$;
			
			\item $Ep(t,x)\in \scr C^{n+1}$ for all $t\in I$ and $x\in \bb R^n$;
			
			\item\label{Lip_Ep} for all $t\in I$ and $r>0$,
			\eee{
				Ep(t,x)\subset Ep(t,y)+C_r\rt|x-y|(\bb B\times [-1,1])\quad \forall x,y\in B(0,r).
			}
			
		}
	\end{lemma}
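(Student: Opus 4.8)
The plan is to prove (i) and (ii) directly and to obtain the Lipschitz estimate (iii) by a separation argument in $\bb R^{n+1}$. For (ii): fixing $(t,x)$, the map $H(t,x,\cdot)$ is finite and convex, hence continuous, hence proper, lower semicontinuous and convex, so by Lemma \ref{lemma1} the conjugate $H^*(t,x,\cdot)$ is proper, lower semicontinuous and convex; therefore $Ep(t,x)=\ttnn{epi }H^*(t,x,\cdot)$ is a non-empty closed convex subset of $\bb R^{n+1}$, i.e. $Ep(t,x)\in \scr C^{n+1}$. For (i): since $p\mapsto \ps{q}{p}-H(t,x,p)$ is continuous one has $H^*(t,x,q)=\sup_{p\in \bb Q^n}\{\ps{q}{p}-H(t,x,p)\}$, whence
\[
Ep(t,x)=\bigcap_{p\in \bb Q^n}\graffe{(q,s)\in\bb R^{n+1}\,|\,s-\ps{q}{p}\mageq -H(t,x,p)}.
\]
Each map $t\rightsquigarrow \graffe{(q,s)\in\bb R^{n+1}\,|\,s-\ps{q}{p}\mageq -H(t,x,p)}$ is a closed half-space whose defining inequality has a measurably varying right-hand side (recall $t\mapsto H(t,x,p)$ is measurable), hence a measurable set-valued map; since a countable intersection of measurable set-valued maps is measurable (cf. the proof of Theorem \ref{teo_rep_convex_sets}), (i) follows. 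Alternatively, $H^*(\cdot,x,\cdot)$ is a normal integrand and one may quote the corresponding measurability result in \cite{aubin2009set}.

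For (iii), fix $t\in I$, $r>0$, $x,y\in B(0,r)$ and set $\rho:=C_r(t)|x-y|$; argue by contradiction, assuming there is $(q,s)\in Ep(t,x)$ with $(q,s)\notin K:=Ep(t,y)+\rho(\bb B\times[-1,1])$. The set $K$ is non-empty, convex and closed (the sum of a closed convex set and a compact set), so by strict separation there is $(a,b)\in\bb R^{n+1}\setminus\graffe{0}$ with
\[
\ps{a}{q}+bs>\sigma_{Ep(t,y)}(a,b)+\rho|a|+\rho|b|,
\]
using $\sigma_K(a,b)=\sigma_{Ep(t,y)}(a,b)+\rho|a|+\rho|b|$. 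Since $(0,1)$ is a recession direction of $Ep(t,y)$, finiteness of $\sigma_{Ep(t,y)}(a,b)$ forces $b\mineq 0$. If $b<0$, after rescaling we may take $b=-1$; then $\sigma_{Ep(t,y)}(a,-1)=(H^*(t,y,\cdot))^*(a)=H(t,y,a)$ by Lemma \ref{lemma1}, whereas $(q,s)\in Ep(t,x)$ gives $s\mageq H^*(t,x,q)\mageq \ps{q}{a}-H(t,x,a)$; inserting both into the displayed inequality yields $H(t,x,a)-H(t,y,a)>\rho(1+|a|)=C_r(t)(1+|a|)|x-y|$, contradicting \ttnn{H.1.2}. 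If $b=0$, then $a\neq 0$ and $\sigma_{Ep(t,y)}(a,0)=\sigma_{D(t,y)}(a)$; but $(q,s)\in Ep(t,x)$ forces $q\in D(t,x)\subset D(t,y)+\rho\bb B$ by Lemma \ref{D_Lip}-(ii), so $\ps{a}{q}\mineq \sigma_{D(t,y)}(a)+\rho|a|$, again contradicting the displayed inequality. Hence no such $(q,s)$ exists, which is (iii).

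The genuinely delicate point is (iii). A direct attempt to bound $H^*(t,y,q)-H^*(t,x,q)$ pointwise out of \ttnn{H.1.2} breaks down because the error term $C_r(t)(1+|p|)|x-y|$ grows with $|p|$ while $H^*$ is a supremum over all $p\in\bb R^n$, so the error cannot be absorbed uniformly. The separation argument circumvents this: passing through conjugacy, it reduces the \emph{vertical} case $b<0$ to the clean Lipschitz bound \ttnn{H.1.2} on $H$ itself, and the \emph{horizontal} case $b=0$ to the Lipschitz property of the domains already established in Lemma \ref{D_Lip}-(ii).
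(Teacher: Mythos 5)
Your proof is correct, and for the key estimate (iii) it takes a genuinely different route from the paper's. The paper argues constructively: from \ttnn{H.1.2} it deduces $\tonde{H(t,y,.)+C_r(t)(1+|.|)|x-y|}^*\mineq H^*(t,x,.)$, identifies the left-hand conjugate as the inf-convolution of $H^*(t,y,.)$ with the auxiliary function $h$ equal to $-C_r(t)|x-y|$ on $B(0,C_r(t)|x-y|)$ and $+\infty$ elsewhere, so that for each $(q,\lambda)\in Ep(t,x)$ there exists $w\in B(q,C_r(t)|x-y|)\cap D(t,y)$ with $H^*(t,y,w)-C_r(t)|x-y|\mineq H^*(t,x,q)$, and then writes $(q,\lambda)=(w,\lambda+C_r(t)|x-y|)+C_r(t)|x-y|\tonde{\frac{q-w}{C_r(t)|x-y|},-1}$ to land in $Ep(t,y)+C_r(t)|x-y|(\bb B\times[-1,1])$. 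You instead separate strictly a putative point of $Ep(t,x)$ from the closed convex set $Ep(t,y)+\rho(\bb B\times[-1,1])$ and split the separating normal $(a,b)$ into the vertical case $b<0$, which conjugacy (Lemma \ref{lemma1}) converts exactly into a violation of \ttnn{H.1.2}, and the horizontal case $b=0$, which you discharge with Lemma \ref{D_Lip}-(ii); all the individual steps (closedness of the sum, $\sigma_{Ep(t,y)}(a,-1)=H(t,y,a)$, $\sigma_{Ep(t,y)}(a,0)=\sigma_{D(t,y)}(a)$, the sign of $b$ from the recession direction $(0,1)$) check out, and your argument even avoids the paper's need to assume $C_r(t)|x-y|\neq 0$ since you never divide by $\rho$. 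The trade-off: your duality argument is shorter and bypasses the inf-convolution computation and its attainment step, but it is non-constructive and leans on Lemma \ref{D_Lip}-(ii) for the horizontal normals (legitimate, since that lemma precedes this one and the paper's own proof also invokes it), whereas the paper's computation produces the nearby point $w$ explicitly. Parts (i)--(ii) are handled essentially as in the paper, which simply cites Lemma \ref{lemma1} and measurability of the Fenchel transform; your representation of $Ep(t,x)$ as a countable intersection of measurably varying half-spaces over rational $p$ is a more detailed justification of the same measurability claim.
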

	\begin{proof}
		We notice that, from \ttnn{H.1.1-2}, the lower semicontinuity and the convexity of Fenchel transform (cfr. Lemma \ref{lemma1}), it follows immediately that, for any $t\in I$ and $x\in \bb R^n$, the set-valued map $s\rightsquigarrow Ep(s,x)$ is measurable and $Ep(t,x)$ is non-empty, closed, and convex. So, (i) and (ii) holds. 
		
		Now, we show \ref{Lip_Ep}. Fix $x,y\in \bb R^n$, $t\in I$, and consider $(q,\lambda)\in Ep(t,x)$. Without loss of generality we may assume that $C_r(t)|x-y|\neq 0$. We claim the following: there exists $w\in D(t,y)$ satisfying $(w,q+C_r(t)|x-y|)\in Ep(t,y)$. Indeed, from assumption H.1.2, we have that $H(t,x,p)\mineq H(t,y,p)+  C_r(t)(1+|p|)|x-y|$ for all $p\in \bb R^n$, and, from the definition of Fenchel transform,	 
		\equazioneref{passo_1_}{
			\tonde{H(t,y,.)+  C_r(t)(1+|.|)|x-y|}^*(\tilde q)\mineq H^*(t,x,\tilde q)\quad \forall \tilde q\in \bb R^n.
		}	 
		
		\noindent Now, define $h(.):=-C_r(t)|x-y|$ on $B(0,C_r(t)|x-y|)$ and $+\infty$ elsewhere. Notice that $h\ccd$ is a proper lower semicontinuous convex function and $h^*(.)=C_r(t)(1+|.|)|x-y|$. Notice that, since $\ttnn{dom }H(t,y,.)=\bb R^n$ and from assumption \ttnn{H.1.1}, applying Lemma \ref{lemma1} we get $(H^*(t,y,.))^*=H(t,y,.)$. So, for all $z\in \bb R^n$
		
		$\tonde{\,\inf_{q_1\in \bb R^k} H^*(t,y,q_1)+h(.- q_1)}^*(z)$
		
		$:=\sup_{q_2\in \bb R^k} \graffe{\ps{q_2}{z} - \inf_{q_1\in \bb R^k} \graffe{H^*(t,y,q_1)+h(q_2-q_1) } }$
		
		$=\sup_{q_2\in \bb R^k} \graffe{\ps{q_2}{z} +\sup_{q_1\in \bb R^k} \graffe{-H^*(t,y,q_1)-h(q_2-q_1) } } $
		
		$=\sup_{q_2,\,q_1\in \bb R^k} \{\ps{q_2}{z} -H^*(t,y,q_1)-h(q_2-q_1) \} $
		
		$=\sup_{q_1\in \bb R^k} \graffe{  \ps{q_1}{z} -H^*(t,y,q_1)  +\sup_{ q_2\in \bb R^k} \graffe{\ps{q_2-q_1}{z} -h(q_2-q_1) } }$
		
		$=H(t,y, z)+C_r(t)(1+|z|)|x-y|.$	
		
		\noindent Since $\ttnn{dom }h=B(0,C_r(t)|x-y|)$ and the function
		
		$z\mapsto\tonde{\,\inf_{ q_1\in \bb R^k} H^*(t,y, q_1)+h(z-  q_1)}$
		
		\noindent is proper lower semicontinuous and convex, passing to the Fenchel transform and using Lemma \ref{lemma1} we deduce
		
		${\,\inf_{q_1\in \bb R^k}\, H^*(t,y, q_1)+h(q- q_1)}$
		
		$={\,\inf_{q_1\in B(q,C_r(t)|x-y|)} \,H^*(t,y, q_1)-C_r(t)|x-y| }$
		
		$=(H(t,y, .)+C_r(t)(1+|.|)|x-y|)^*(q).$

		\noindent Thus, from \rif{passo_1_}, there exists $w\in B(q,C_r(t)|x-y| )$ satisfying
		
		$
			H^*(t,y,w) -C_r(t)|x-y|   \mineq     H^*(t,x, q).
		$
		
		\noindent Hence, the claim follows. Now, applying Lemma \ref{D_Lip}-(ii), $|q-w|\mineq C_r(t)|x-y|$ because $q\in D(t,x)$ and $w\in D(t,y)$. Finally,	since
		
		$	(q,\lambda)=(w,\lambda+C_r(t)|x-y|)+C_r(t)|x-y|\tonde{\frac{q-w}{C_r(t)|x-y|},-1},$

		\noindent the statement (iii) follows by the arbitrariness of $(q,\lambda)\in Ep(t,x)$. \qed\end{proof}
	\begin{proposition}\label{prop_function_phi}
		Assume \ttnn{H.1.1-2} and \ttnn{H.1.4}.
		
		Then there exists a function \(\phi : I \times \bb R^n \times \bb R^{n+1} \ra \bb {R}^{n+1}\),
		\eee{
			\phi(t,x,u)=:(f(t,x,u),\ell (t,x,u))\in \bb R^n\times \bb R,
		}
		satisfying
		\equazioneref{F_uguale_f}{
			Ep(t, x)=\phi(t, x, \bb R^{n+1}) \quad \forall (t,x)\in I\times \bb R^n,
		}
		and:
		\enurom{

			\item \(\phi(., x, u)\) is measurable for all $x\in \bb R^n$ and $u\in \bb R^{n+1}$;
			
			\item for all $t\in I$ and $r>0$
			\eee{
				&|\phi(t, x, u)-\phi(t, {y}, {v})|\mineq 5(n+1)( C_r(t) |x-y|+ |u-{v}|)\\
				&\forall x,y\in B(0,r),\, \forall u,v\in \bb R^{n+1}.
			}

		}
		If, in addition, \ttnn{H.1.3} holds, then 
		\eee{
			Gr(t,x)\subset \phi(t,x,\bb B)\quad \forall (t,x)\in I\times \bb R^n,
		}
		the statement \ttnn{(ii)}  is replaced by the following:
		\enurom{
			\item[\ttnn{(ii)}$'$]  for all $t\in I$ and $r>0$
			\eee{
				&|\phi(t, x, u)-\phi(t, {y}, {v})|\mineq 5(n+1)( C_r(t) |x-y|+ |\eta(t,x)u-\eta(t,y){v}|)\\
				&\forall\,x,y\in B(0,r)    ,\,\forall u,v\in \bb R^{n+1},
			}
			where $\eta(t,.):=(\tilde c\rt+C_r\rt)(1+|.|)+|H(t,0,0)|+\gamma(t,.)$;
		}
		and moreover:
		\enurom{
			\item[\ttnn{(iii)}$'$] $D(t,x)=f(t,x,\bb B)$ for all $t\in I$ and $x\in \bb R^n$.
					}

	\end{proposition}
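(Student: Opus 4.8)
My approach would be to obtain both functions $\phi$ as instances of the parametrization of Theorem~\ref{teo_rep_convex_sets}, applied to the set-valued map $Q:=Ep$ on $I\times\bb R^n$ (so $k=n$, $m=n+1$), for two different auxiliary maps $\delta$. First one checks that $Q=Ep$ fulfils the hypotheses of Theorem~\ref{teo_rep_convex_sets}: $Ep(.,x)$ is measurable and $Ep(t,x)\in\scr C^{n+1}$ by Lemma~\ref{lemma_lip_Ep}(i)--(ii), while the $x$-Lipschitz inclusion of Lemma~\ref{lemma_lip_Ep}(iii) supplies assumption~(a) of that theorem (the factor $\bb B\times[-1,1]\subset\sqrt2\,\bb B$ being absorbed into the constant $C_r$).

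For the case with only \ttnn{H.1.1--2} and \ttnn{H.1.4}, I would invoke Theorem~\ref{teo_rep_convex_sets} with the constant map $\delta\equiv\bb R^{n+1}$. The ``in particular'' clause~\rif{F_uguale_f_2} then gives $Ep(t,x)=\phi(t,x,\bb R^{n+1})$, and items~(i)--(ii) of that theorem (with $\eta\equiv1$, since $\norm{\delta}\equiv+\infty$) give (i) and the Lipschitz estimate~(ii) with constant $5(n+1)$. Writing $\phi=(f,\ell)\in\bb R^n\times\bb R$ closes this case.

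Now assume in addition \ttnn{H.1.3}. The crucial preliminary is the inclusion $Gr(t,x)\subset\eta(t,x)\,\bb B^{n+1}$ for $x\in B(0,r)$, with $\eta(t,.):=(\tilde c(t)+C_r(t))(1+|.|)+|H(t,0,0)|+\gamma(t,.)$: indeed, for $q\in D(t,x)$ one has $|q|\leq\tilde c(t)(1+|x|)$ by Lemma~\ref{lemma1_sec_hamil}(ii), $H^*(t,x,q)\leq\gamma(t,x)$ by the definition of $\gamma$, and $H^*(t,x,q)\geq-H(t,x,0)\geq-|H(t,0,0)|-C_r(t)(1+|x|)$ by \ttnn{H.1.2} with $p=0$; summing these and using $\sqrt{a^2+b^2}\leq a+b$ yields the inclusion. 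Two facts are needed here. First, $\gamma(t,x)<+\infty$: under \ttnn{H.1.3} the set $D(t,x)$ is bounded, hence --- being closed by Lemma~\ref{D_Lip}(i) --- compact, and \ttnn{H.1.4} furnishes a cover of $\ttnn{cl }D(t,x)$ by balls $B$ with $\sup_{q\in D(t,x)\cap B}H^*(t,x,q)<+\infty$, so a finite subcover bounds $\sup_{q\in D(t,x)}H^*(t,x,q)$. Second, $\gamma(.,x)$ --- hence $\eta(.,x)$ --- is measurable; this follows from the identity $\gamma(t,x)=\max\{0,\sup_{p\in\bb Q^n}(\sigma_{D(t,x)}(p)-H(t,x,p))\}$ (valid since $p\mapsto\sigma_{D(t,x)}(p)-H(t,x,p)$ is continuous when $D(t,x)$ is bounded) together with the measurability of $t\mapsto\sigma_{D(t,x)}(p)$ for each fixed $p$, a consequence of the measurability of $t\rightsquigarrow D(t,x)$ (cfr.~\cite{aubin2009set}).

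With this in hand I would apply Theorem~\ref{teo_rep_convex_sets} once more to $Q=Ep$, now with $\delta(t,x):=\eta(t,x)\,\bb B^{n+1}$: this $\delta$ has non-empty closed bounded values, $\delta(.,x)$ is measurable, and $\norm{\delta(t,x)}=\eta(t,x)$, so the function produced by~\rif{prop_eta} coincides with the above $\eta$. Then item~(i) of the theorem gives~(i); item~(ii) gives~(ii)$'$ with constant $5(n+1)$; the equality $Ep(t,x)=\phi(t,x,\bb R^{n+1})$ survives by the argument used for~\rif{F_uguale_f_2} (for $w\in Ep(t,x)$ one has $P(w,Ep(t,x))=\{w\}$, whence $\phi(t,x,w/\eta(t,x))=w$); item~(iv), applied to $Gr(t,x)\subset Ep(t,x)\cap\delta(t,x)=Q(t,x)\cap\delta(t,x)$, gives $Gr(t,x)\subset\phi(t,x,\bb B)$; and projecting this inclusion --- together with $\phi(t,x,\bb B)\subset Ep(t,x)$ from item~(iii) --- onto the first $n$ coordinates yields~(iii)$'$, that is $D(t,x)=f(t,x,\bb B)$. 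The main obstacle is precisely the treatment of $\gamma$, namely its finiteness (the compactness-and-finite-subcover step, where \ttnn{H.1.4} is genuinely used beyond Lemma~\ref{D_Lip}) and its measurability in $t$; the rest is bookkeeping with Theorem~\ref{teo_rep_convex_sets} and the elementary bounds on $H^*$ over its domain. One should also fix $C_r$ nondecreasing in $r$ from the start, so that a single $\delta$ serves on every ball $B(0,r)$.
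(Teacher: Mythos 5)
Your proposal is correct and follows the paper's route: the paper likewise proves the proposition by feeding $Q=Ep$ (via Lemma \ref{lemma_lip_Ep}) into Theorem \ref{teo_rep_convex_sets}, first with $\delta\equiv\bb R^{n+1}$ and then, under H.1.3, after deriving exactly your bound $-H(t,x,0)\leq H^*(t,x,v)\leq\gamma(t,x)$ and $\norm{Gr(t,x)}\leq(\tilde c(t)+C_r(t))(1+|x|)+|H(t,0,0)|+\gamma(t,x)$ on $B(0,r)$. The one genuine difference is the choice of $\delta$ in the second step: the paper takes $\delta(t,x)=Gr(t,x)$ itself, so that the theorem's item (iv) gives $Gr(t,x)=Ep(t,x)\cap\delta(t,x)\subset\phi(t,x,\bb B)$ at once and the theorem's $\eta$ is $\norm{Gr(t,x)}$ (only bounded above by the $\eta$ displayed in the statement), whereas you take $\delta=\eta\,\bb B^{n+1}$ with the explicit $\eta$. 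Your variant buys trivially closed, obviously measurable values for $\delta$ and makes the Lipschitz estimate (ii)$'$ hold with the advertised $\eta$ literally (rather than via an upper bound on $\norm{Gr}$, a substitution the paper does not justify), at the mild cost of having to check $Gr(t,x)\subset\delta(t,x)$ — which is your bound — and the surjectivity $Ep(t,x)=\phi(t,x,\bb R^{n+1})$ by the scaling argument $\phi(t,x,w/\eta(t,x))=w$; the paper's variant would need the same scaling remark plus closedness and measurability of $t\rightsquigarrow Gr(t,x)$, which it leaves implicit. Your added justifications — finiteness of $\gamma(t,x)$ by compactness of $D(t,x)$ and a finite subcover from H.1.4, measurability of $\gamma(\cdot,x)$ via $\sup_{q\in D(t,x)}H^*(t,x,q)=\sup_{p}\{\sigma_{D(t,x)}(p)-H(t,x,p)\}$ over rational $p$, and the projection argument for (iii)$'$ — are sound and fill steps the paper dispatches with ``follows immediately''; the $\sqrt2$ absorption from $\bb B\times[-1,1]$ and the $r$-dependence of $C_r$ inside $\eta$ are imprecisions already present in the paper's own statement, and you handle them at least as carefully as the original.
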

	\begin{proof}
		Let $t\in I$ and $r>0$. Notice that, if \ttnn{H.1.3} holds, then $\gamma(t,x)<+\infty$ for any $x\in B(0,r)$. Furthermore, for all $x\in B(0,r)$ and $v\in D(t,x)$
		
		$
			-H(t,x,0)\mineq H^*(t,x,v)\mineq \gamma(t,x).
		$
		
		\noindent We get $|(v,H^*(t,x,v))|\mineq \tilde c\rt(1+|x|)+\gamma(t,x)+|H(t,x,0)|\mineq \tilde c\rt(1+|x|)+C_r\rt(1+|x|)+|H(t,0,0)|+\gamma(t,x)$ for all $x\in B(0,r)$. So
		\equazioneref{stima_norma_delta}{
			\norm{Gr(t,x)}\mineq (\tilde c\rt+C_r\rt)(1+|x|)+|H(t,0,0)|+\gamma(t,x)
		}
		for all $x\in B(0,r)$. Hence, the conclusions follows immediately from Theorem \ref{teo_rep_convex_sets} and Lemma \ref{lemma_lip_Ep}, with $Q(t,x)=Ep(t,x)$ and $\delta(t,x)=Gr(t,x)$.
	\qed\end{proof}

	Next, we give a proof of Theorem \ref{theo_rep_H}.
	
	\noindent {\it Proof of Theorem \ref{theo_rep_H}}
		Consider the function $\phi=(f,\ell)$ of Proposition \ref{prop_function_phi}. The statements (i) and (iii) follows from Proposition \ref{prop_function_phi}. Next we show (ii). 
		
		Fix $t\in I$, $x\in \bb R^n$, and $p\in \bb R^n$. Recalling that $(H^*(t,x,.))^*=H(t,x,.)$, from \rif{F_uguale_f} it follows that for any $u\in \bb R^{n+1}$ the pair $(f(t,x,u),\ell(t,x,u))$ lays in $Ep(t,x)$, i.e.,
		\equazioneref{H_star_min_ell}{
			H^*(t,x,f(t,x,u))\mineq \ell(t,x,u).
		}
		So, for any $u\in \bb R^{n+1}$
		
		$\ps{p}{f(t,x,u)}-\ell(t,x,u)$
		
		$\mineq \ps{p}{f(t,x,u)}-H^*(t,x,f(t,x,u))$
		
		$\mineq \sup_{v\in \bb R^{n+1}}\{\ps{p}{v}-H^*(t,x,v)\}=H(t,x,p)$.
		
		\noindent Then, by arbitrariness of $u\in \bb R^{n+1}$, we get
		\eee{
			\sup_{u\in \bb R^{n+1}}\{\ps{p}{f(t,x,u)}-\ell(t,x,u)\}\mineq H(t,x,p).
		}
		On the other hand, let $v\in D(t,x)$. Since $(v,H^*(t,x,v))\in Ep(t,x)$, from \rif{F_uguale_f}, there exists $w\in \bb R^{n+1}$ such that $(v,H^*(t,x,v))=(f(t,x,w),\ell(t,x,w))$. So, $\ps{p}{v}-H^*(t,x,v)=\ps{p}{f(t,x,w)}-l(t,x,w)\mineq \sup_{u\in \bb R^{n+1}}\{ \ps{p}{f(t,x,u)}-\ell(t,x,u) \}$. Hence,
		\eee{
			H(t,x,p)&=\sup_{v\in D(t,x)}\{ \ps{p}{v}-H^*(t,x,v) \}\\
			&\mineq\sup_{u\in \bb R^{n+1}}\{ \ps{p}{f(t,x,u)}-\ell(t,x,u) \}.
		}
		The last statements, assuming that \ttnn{H.1.3} holds, can be obtained with the same arguments as above using Proposition \ref{prop_function_phi}.
	\qed

	\begin{remark}\label{remarks_sec_rep}\rm
		Let $t\in I$ and $x\in \bb R^n$.
		\enualp{
			\item Assumption \ttnn{H.1.4} is weaker than (H4) in \cite[p. 33]{frankowskasedrak2014stablerephami} and \cite[p. 869]{rampazzo2005faithful}.

			\item 	The condition in \ttnn{H.1.4} is equivalent to require that $H^*(t,x,.)$ is locally bounded on its domain. If $H(t,x,.)$ is globally Lipschitz, then one can show that \ttnn{H.1.4} is equivalent to assume that $H^*(t,x,.)$ is bounded on its domain.
			
			\item 	We would like to underline that, if $H(t,x,.)$ is convex, then, from Lemma \ref{lemma1_sec_hamil}, the domain $D(t,x)$ of the Fenchel transform $H^*(t,x,.)$ turn out to be bounded under the global Lipschitz assumption \ttnn{H.1.3}. In particular, we notice that the Lipschitz condition imply the sublinear growth of $H(t,x,.)$. On the other hand, when $H(t,x,.)$ is merely locally Lipschitz continuous, then $D(t,x)=\bb R^n$ if and only if $H(t,x,.)$ is coercive, i.e., $\lim_{|p|\ra +\infty}\frac{H(t,x,p)}{|p|}=+\infty$ (cfr. \cite[Theorem 11.8]{rockafellar2009variational}).

			\item Let $r>0$ such that $x\in  B(0,r)$. From Remark \ref{remark_stima_H_star_ell}, \rif{stima_norma_delta}, and \rif{H_star_min_ell} we get for all $u\in \bb B$
			
			$\ell(t,x,u)-H^*(t,x,f(t,x,u))\mineq 10 m (\tilde c\rt+C_r\rt)(1+|x|)+|H(t,0,0)|+\gamma(t,x).$

			\item It is necessary to point out that, although not in the main interest of this paper, by using the same arguments that those proposed in \cite{frankowskasedrak2014stablerephami} (cfr. \cite{rampazzo2005faithful}), the representation of Theorem \ref{theo_rep_H} is faithful.

		}
		
	\end{remark}
		
	\section{{{Hamilton-Jacobi-Bellman equations on infinite horizon}}}\label{sec_inf_hor_control_prob}

	For any $a\in \bb R$, $b\in \bb R\cup \{+ \infty\}$, with $a<b$, and $A\subset \bb R^k$ we take the following notation:

	\marginpersonal{3}{0}{\hspace{-7mm}$\bullet$ \hspace{4mm}$\scr L^1(a,b;A)$ denotes the normed space of all $A$-valued Lebesgue integrable functions on $[a,b)$ (we write $w\in\scr L^1_{\ttnn{loc}}(a,b;A)$ if $w\in \scr L^1(c,d;A)$ for any $[c, d]\subset [a,b)$)\footnote{If $w\in \scr L^1_{\ttnn{loc}}(a,+\infty;\bb R)$, we denote $\int_{a}^{\infty}w\rs\,ds:=\lim_{b\ra\infty}\int_{a}^{b}w\rs\,ds$, provided this limit exists.};
	}
	\marginpersonal{3}{0}{\hspace{-7mm}$\bullet$ \hspace{3mm}$\scr W^{1,1}_{loc}(a,b;A)$ is the normed space of all $A$-valued locally absolutely continuous functions on $\ttnn{cl }[a,b)$;
	}
	\marginpersonal{3}{0}{\hspace{-7mm}$\bullet$ \hspace{3mm}$\scr L_{\ttnn{loc}}$ is the set of  all $f\in \scr L^1_{\ttnn{loc}}(0,+\infty;\bb R^+)$ such that $\lim_{\sigma\ra 0}\, \sup \{\int_{J} {f(\tau)}\,d\tau \,|$ $\, J\subset  \bb R^+,\, \mu(J)\mineq \sigma\}\,=0$. 
	}
	
	\noindent In this section we consider a closed non-empty subset $\Omega\subset \bb R^n$.	
	
	\subsection{Weak solutions}
	We denote by {H.2.1,3,4} the assumptions in {H.1.1,3,4}, and by {H.2.2} the assumption {H.1.2} with $C_r\ccd\mineq C\ccd$ for any $r>0$ and a suitable $C\in \scr L_{\ttnn{loc}}$. We consider the further two conditions on the Hamiltonian:
	
	\marginpersonal{3.2}{0}{\hspace{-10mm}\ttnn{H.2.5}.  there exist $\tilde \varphi\in \scr L^1_{\ttnn{loc}}(0,+\infty;\bb R^+)$ and $\varphi\in \scr L^1(0,+\infty;\bb R)$ such that $\ttnn{ for a.e. } t\mageq 0$, for all $x\in \bb R^n$, and all $q\in D(t,x)$
		
		\vspace{2mm}
		
		$\varphi\rt	\mineq  {H^*(t,x,q)} \mineq \tilde \varphi\rt(1+|x|);$
		
		\vspace{2mm}
	}

	\marginpersonal{3.2}{0}{\hspace{-10mm}\ttnn{H.2.6}. there exists $ \psi\in \scr L_{\ttnn{loc}}$ such that for a.e. $t\mageq 0$, for all $x\in \ttnn{bdr }\Omega,$ and all $q\in D(t,x)$
		
		\vspace{2mm}
		
		$\modulo{q}+\modulo{H^*(t,x,q)}  \mineq \psi\rt.$
		
		\vspace{2mm}}

	\begin{definition}\label{definition_weak_sol}\rm
		A lower semicontinuous function $V:\bb R^+ \times \Omega\ra \bb R\cup \{\pm \infty\}$ is called a \textit{weak} $-$ or \textit{viscosity} $-$ \textit{solution} of the HJB equation \rif{def_weak_sol}	if there exists a set $C\subset (0,+\infty)$, with $\mu((0,+\infty\backslash C))=0$, such that
		\eee{
			&-p_t+H(t,x,-p_x)\mageq 0\\
			& \forall\,(p_t,p_x)\in \partial_- V(t,x),\,   \forall (t,x)\in \ttnn{dom}\,V\cap (C\times \ttnn{bdr }\Omega),
		}
		and
		\eee{
			&-p_t+H(t,x,-p_x)= 0\\
			&\forall\,(p_t,p_x)\in \partial_- V(t,x)   ,\, \forall (t,x)\in \ttnn{dom}\,V\cap(C\times \ttnn{int}\,\Omega).
		}
		
	\end{definition}
	
	\noindent For all $t\in \bb R^+$, $x\in \bb R^n$, and $u\in \bb R^{n+1}$ we denote by $(f(t,x,u),\ell(t,x,u)):=\phi(t,x,u)$ the representation of the Hamiltonian given by Theorem \ref{theo_rep_H}, and by $\scr U_\Omega(t,x)$ the (possibly empty) set of all trajectory-control pairs $(\xi,u):[t,+\infty)\ra \bb R^n\times  \bb R^{n+1}$ such that $u\ccd$ is measurable and
	\sistemaref{sistemacontrollo}{
		\xi'\rs=f(s,\xi\rs,u\rs) &s\in[t,+\infty)\ttnn{ a.e.}\\
		\xi(t)=x\\
		u\ccd \subset \bb B,\quad \xi\ccd\subset \Omega.
	}
	The value function $v:\bb R^+\times \bb R^n\ra \bb R\cup \{\pm \infty\}$ associated to the representation $(f,\ell)$ is defined by
	
	$
		v(t,x):= \inf \, \graffe{ \;\int_t^{+\infty} \ell(s,\xi\rs,u\rs)\,ds   \;|\; (\xi\ccd,u\ccd)\in \scr U_\Omega(t,x)  },
	$
	
	\noindent where $v(t,x)=+\infty$ if $\scr U_\Omega(t,x)=\emptyset$, by convention. In the following we consider the outward pointing condition (briefly O.P.C.):
	\marginpersonal{4}{1}{\hspace{-12.152mm}\ttnn{O.P.C.}\hspace{0mm} there exist $\eta, \,r,\,M>0$ such that\\
	$\ttnn{for a.e. } t>0, \forall y\in \partial\Omega+ \eta \bb B,\, \forall q\in D(t,y)$, satisfying $ \inf_{n\in N^{\eta}_\Omega(y)}    \ps{n}{q}  \mineq 0,$\\
	there exists $w\in\,D(t,y)\cap B(q,M)$ such that $\inf_{n\in N^{\eta}_\Omega(y) } \{ \ps{n}{w},\,\ps{n}{w-q} \}  \mageq r.$
	}

	\begin{theorem}\label{main_cor_viscosity}
		Assume \ttnn{H.2.1-6} and \ttnn{O.P.C.}

		\noindent Let $V:\bb R^+ \times \Omega\ra \bb R$ be a locally Lipschitz continuous function satisfying the vanishing condition at infinity \rif{vanishing_cond}.

		Then the following statements are equivalent:
		\enurom{
			\item $V=v$;
			
			\item  $V$ is weak solution of the \ttnn{HJB} equation \ttnn{\rif{def_weak_sol}}.

		}

	\end{theorem}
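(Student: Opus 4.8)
\textit{Proof (plan).}
The plan is to prove the two implications separately, reducing each to a dynamic‑programming / viability argument along the trajectories of \rif{sistemacontrollo}, with the representation $\phi=(f,\ell)$ of Theorem \ref{theo_rep_H} supplying the structure that makes the infinite‑horizon state‑constrained problem well posed: $f,\ell$ are measurable in $t$, locally Lipschitz in $(x,u)$ with $\scr L_{\ttnn{loc}}$ constants, $f(t,x,\bb B)=D(t,x)$, $H(t,x,p)=\sup_{u\in\bb B}\{\ps{p}{f(t,x,u)}-\ell(t,x,u)\}$, and — via Remark \ref{remarks_sec_rep}(d) together with H.2.5 and H.2.6 — $\ell(s,\xi(s),u(s))\mageq \varphi(s)$ along any admissible pair (so $\int_t^{\cdot}\ell$ is well defined in $(-\infty,+\infty]$) while $|f|+|\ell|$ is bounded on $\ttnn{bdr}\,\Omega$ by a function in $\scr L_{\ttnn{loc}}$. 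The key reformulation, following \cite{bascofrank2019hjbinfhor,frankowska1993lowersemicontinuous}, is that for a locally Lipschitz $V$ the conditions of Definition \ref{definition_weak_sol} are read off the normals to $\ttnn{epi}\,V$, and they are equivalent to (backward, resp.\ forward) invariance of $\ttnn{epi}\,V$ in $\Omega\times\bb R$ under the augmented inclusion $(\xi',z')\in\{(f(s,\xi,u),r):u\in\bb B,\ r\mageq \ell(s,\xi,u)\}$; this is what lets one differentiate $s\mapsto V(s,\xi(s))+\int_t^s\ell$ a.e.\ along admissible trajectories despite the merely measurable time dependence.

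For (i)$\Rightarrow$(ii), assume $V=v$. The inequality $-p_t+H(t,x,-p_x)\mageq0$ at every $(t,x)\in\ttnn{dom}\,V$ and a.e.\ $t$ follows from the near‑optimality half of dynamic programming: given $(p_t,p_x)\in\partial_-v(t,x)$ and $\eps>0$, pick $(\xi,u)\in\scr U_\Omega(t,x)$ with $v(t,x)\mageq\int_t^{t+h}\ell(s,\xi(s),u(s))\,ds+v(t+h,\xi(t+h))-\eps h$, subtract the sub‑gradient inequality $v(t+h,\xi(t+h))-v(t,x)\mageq p_th+\ps{p_x}{\xi(t+h)-x}+o(h)$, use $\ell(s,\xi(s),u(s))+\ps{p_x}{f(s,\xi(s),u(s))}\mageq-H(s,\xi(s),-p_x)$, replace $\xi(s)$ by $x$ via H.2.2, divide by $h$, pass to the limit at a Lebesgue point of $s\mapsto H(s,x,-p_x)$, then let $\eps\to0$. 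The opposite inequality $-p_t+H(t,x,-p_x)\mineq0$ at interior points comes from the sub‑optimality half: along an approximately optimal trajectory $s\mapsto v(s,\xi(s))+\int_t^s\ell$ is nearly constant, so $\frac{d}{ds}v(s,\xi(s))=-\ell(s,\xi(s),u(s))$ a.e.; using the local Lipschitz continuity of $v$ to identify this derivative with $p_t+\ps{p_x}{\xi'(s)}$ for $(p_t,p_x)\in\partial_-v$ at a.e.\ interior‑contact time, and choosing the control to nearly attain the supremum defining $H$, gives $p_t\mineq H(t,x,-p_x)$.

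For (ii)$\Rightarrow$(i), assume $V$ is a locally Lipschitz weak solution satisfying \rif{vanishing_cond}; we show $V\mageq v$ and $V\mineq v$. For $V\mageq v$: the super‑solution inequality means $\ttnn{epi}\,V$ is backward viable under the augmented dynamics, so by a Filippov‑type selection — and, at boundary contact, by O.P.C., which furnishes admissible controls keeping $\xi$ in $\Omega$ along the ``super‑optimal'' direction — there is $(\xi,u)\in\scr U_\Omega(t,x)$ with $V(s,\xi(s))+\int_t^s\ell\mineq V(t,x)$ for all $s\mageq t$; since $\ell\mageq\varphi\in\scr L^1$ the tail integral converges, and letting $s\to+\infty$ with $V(s,\xi(s))\to0$ by \rif{vanishing_cond} yields $v(t,x)\mineq\int_t^{+\infty}\ell(s,\xi(s),u(s))\,ds\mineq V(t,x)$. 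For $V\mineq v$: fix any $(\xi,u)\in\scr U_\Omega(t,x)$; on the times with $\xi(s)\in\ttnn{int}\,\Omega$ the equation gives $\frac{d}{ds}V(s,\xi(s))\mageq-\ell(s,\xi(s),u(s))$ as above, while on $\{\xi(s)\in\ttnn{bdr}\,\Omega\}$ only the one‑sided inequality holds, so we perturb $\xi$ slightly into $\ttnn{int}\,\Omega$ using O.P.C., with the extra cost controlled by the boundary bound H.2.6 on $|f|+|\ell|$ and by H.2.5; passing to the limit gives $V(s,\xi(s))+\int_t^s\ell\mageq V(t,x)$ for all $s$, and letting $s\to+\infty$ with \rif{vanishing_cond} and taking the infimum over $\scr U_\Omega(t,x)$ yields $V(t,x)\mineq v(t,x)$.

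The main obstacle is the boundary: making the viability construction for $V\mageq v$ rigorous (an admissible trajectory staying in $\Omega$ along which $V$ is non‑increasing) and controlling the contact set for $V\mineq v$ both use O.P.C.\ and H.2.6 essentially, and this is where the merely measurable time dependence forces one to argue with normals to $\ttnn{epi}\,V$ and measurable viability theorems in the spirit of \cite{frankowskaplaskrze1995measviabth} rather than with $C^1$ test functions; the infinite‑horizon feature adds only the need to keep the running‑cost integrals convergent, which is what H.2.5 and Remark \ref{remarks_sec_rep}(d) provide. \qed
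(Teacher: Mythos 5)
Your overall strategy (dynamic programming plus viability arguments with normals to the epigraph, O.P.C. and H.2.6 to handle boundary contact, the vanishing condition \rif{vanishing_cond} to close the tail) is the same machinery the paper uses, but as written your plan has a genuine gap in the (i)$\Rightarrow$(ii) ``subsolution at interior points'' step. First, your mechanism fails: near-optimal trajectories only make $s\mapsto v(s,\xi(s))+\int_t^s\ell$ \emph{approximately} monotone, which gives no a.e.\ derivative identity, and exactly optimal trajectories are not known to exist here; moreover, for $(p_t,p_x)\in\partial_-v(s,\xi(s))$ the Lipschitz chain rule only yields the one-sided bound $\frac{d}{ds}v(s,\xi(s))\mageq p_t+\ps{p_x}{\xi'(s)}$, so even granting $\frac{d}{ds}v=-\ell$ you would recover $-p_t+H(t,x,-p_x)\mageq 0$ again --- indeed your stated conclusion ``$p_t\mineq H(t,x,-p_x)$'' is the supersolution inequality, not the claimed $-p_t+H(t,x,-p_x)\mineq 0$. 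The deeper issue is that Definition \ref{definition_weak_sol} demands the ``$\mineq$'' part on the Fr\'echet \emph{sub}differential (a bilateral, Barron--Jensen-type condition), whereas the forward suboptimality half of dynamic programming naturally controls the \emph{super}differential. The paper obtains the missing inequality by a backward construction: via the measurable viability theorem of \cite{frankowskaplaskrze1995measviabth} applied to the time-reversed augmented dynamics $-G(j-s,\xi)$ (with the exceptional null set of times independent of $x$ and $u$), for every $u\in\bb B$ one builds a local constrained trajectory arriving at $(t,x)$ with terminal velocity $(f(t,x,u),-\ell(t,x,u))$, deduces $D_\uparrow v(t,x)(-1,-f(t,x,u))\mineq \ell(t,x,u)$ (Proposition \ref{v_uguale_inf_int_H_star}(iii)), and only then pairs this with the subgradient inequality and the representation $H(t,x,p)=\sup_{u\in\bb B}\{\ps{p}{f}-\ell\}$. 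Your plan contains no substitute for this backward, for-every-control construction.

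Secondarily, in (ii)$\Rightarrow$(i) the phrase ``Filippov-type selection'' underestimates the crux: passing from the pointwise normal-cone inequalities to an infinite-horizon constrained trajectory along which $V+\int\ell$ is nonincreasing (and approximating boundary-touching trajectories from the interior) is exactly what the paper organizes through Proposition \ref{proposition1lipschitz}, the l.a.c.\ of $t\rightsquigarrow \ttnn{epi}\,V(t,.)$, the hypograph viability Lemma \ref{lemma_parte_interna_2}, the identification of $v$ with the calculus-of-variations problem via Theorem \ref{theo_rep_H}(v)$'$ and measurable selection, and the comparison result \cite[Theorem 3.3]{bascofrank2019hjbinfhor}; you name the right ingredients but leave this construction unproved. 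Note also that the paper's proof of the theorem itself is essentially the translation, for locally Lipschitz $V$, between Fr\'echet sub/superdifferentials and the polar cones $T_{\ttnn{epi}\,V}^-$, $T_{\ttnn{hypo}\,V}^+$ (identities \rif{sub_diff_and_polar_cones}--\rif{sub_diff_and_polar_cones_2}), a reduction your plan bypasses but which is what makes the bilateral definition match the cone-based characterization.
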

	
	The proof of Theorem \ref{main_cor_viscosity} is given in Section 5.3.

	\begin{remark}\label{remark_dopo_theo_viscosity}\rm
		
		\
		
		\enualp{

			\item We would like to underline that condition O.P.C. is helpful to construct feasible trajectories for infinite horizon control problems. More precisely, it provides uniform neighboring feasible trajectories results (cfr. \cite{bascofrankowska2018lipschitz}), on any compact interval $[0,T]\subset \bb R^+$, for the dynamics $F(s,x)=-f(T-s,x,\bb B)$. Such results basically says that any absolutely continuous trajectory $\xi\ccd$ starting from a point in $\Omega$ and solving the differential inclusion $\xi\ccd\in F(.,\xi\ccd)$ can be approximated by a sequence of trajectories which remain in the interior of the state constraints $\Omega$.

			\item For existence results of the HJB in \rif{def_weak_sol} in the free-constraints case we refer to \cite{bardicapuzzodolcetta1997hjbeviscosity} and the literature therein. However, for infinite horizon optimal control problems under state constraints, O.P.C. does not ensure the existence of feasible trajectories. Existence results under state constraints are investigate in \cite{bascofrank2019hjbinfhor} in the case of Lagrangian with discount factor $e^{-\lambda t}$ and a suitable inward pointing condition.
			
			\item Assumption H.2.6 can be skipped for finite horizon settings in order to recover neighbouring feasible trajectory results (cfr. Remark \ref{remark_dopo_theo_viscosity}-(b) above). However, for infinity horizon control problem, such condition is sufficient to ensure uniform neighbouring feasible trajectory theorems (cfr. \cite{bascofrankowska2018lipschitz}).
			
		}
	\end{remark}

	\subsection{Locally absolutely continuity of epigraph}	
	For all $t\in \bb R^+$, $x\in \bb R^n$, and $u\in \bb R^{n+1}$ we put	$L(t,x,u):=H^*(t,x,f(t,x,u))$.
	\begin{lemma}\label{lemma_ipotesi_su_L}
		Assume \ttnn{H.2.1-5}.
		
		Then:
		\margin{\hspace{-6mm}\ttnn{(i)} for all $ x\in \bb R^n$ the mappings $f(.,x,.)$ and $\ell(.,x,.)$ are Lebesgue-Borel measurable and there exists $\phi\in \scr L^1(0,+\infty;\bb R)$ such that $\ell(t,x,u)\mageq \phi\rt$ for a.e. $t\mageq 0$ and all $x\in \bb R^n$ and $u\in \bb R^{n+1}$;
		}
		\margin{\hspace{-7mm}\ttnn{(ii)} there exists $ c\in \scr L^1_{loc}(0,+\infty;\bb R^+)$  such that $	\modulo{f(t,x,u)}+\modulo{\ell(t,x,u)}\mineq c\rt(1+\modulo{x})$ for a.e. $t\mageq 0$ and for all $x\in \bb R^n$, $u\in \bb B$;
		}
		\margin{\hspace{-8mm}\ttnn{(iii)} for a.e. $t\mageq 0$ and all $x\in \bb R^n$, the set-valued map $\bb R^n \ni y\rightsquigarrow \{(f(t,y,u),$ $\ell(t,y,u))\,|\, u\in \bb B\}$ is continuous with closed images;
		}
		\margin{\hspace{-8mm}\ttnn{(iv)} there exists $ k\in \scr L_{\ttnn{loc}}$ such that $|f(t,x,u)-f(t,y,u)|+\modulo{\ell(t,x,u)-\ell(t,y,u)}\mineq k\rt |x-y|$ for a.e. $t\mageq 0$ and for all $x,\,y\in \bb R^n$, $u\in \bb B$.
		}

	\end{lemma}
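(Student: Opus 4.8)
The plan is to deduce the four assertions by unpacking the properties of the representation $\phi=(f,\ell)$ produced by Theorem~\ref{theo_rep_H} (which is available since \ttnn{H.2.1--4} imply \ttnn{H.1.1--4}) and feeding in the quantitative bounds of \ttnn{H.2.5}. For \ttnn{(i)} and the closedness statement in \ttnn{(iii)}: Theorem~\ref{theo_rep_H}\,\ttnn{(i)} gives measurability of $f(\cdot,x,u)$ and $\ell(\cdot,x,u)$ in $t$, while for fixed $(t,x)$ the estimate in Theorem~\ref{theo_rep_H}\,\ttnn{(iii)$'$} shows $u\mapsto\phi(t,x,u)$ is Lipschitz (with constant $5(n+1)\eta(t,x)$), hence continuous; a map measurable in $t$ and continuous in $u$ satisfies the Carath\'eodory conditions and is therefore jointly Lebesgue-Borel measurable on $\bb R^+\times\bb R^{n+1}$. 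Moreover $\{(f(t,y,u),\ell(t,y,u))\,|\,u\in\bb B\}=\phi(t,y,\bb B)$ is the image of the compact ball $\bb B$ under the continuous map $\phi(t,y,\cdot)$, hence compact, in particular closed; the continuity of $y\rightsquigarrow\phi(t,y,\bb B)$ in the Hausdorff distance will then follow from \ttnn{(iv)}, since $\dl(\phi(t,y,\bb B),\phi(t,y',\bb B))\leq\sup_{u\in\bb B}|\phi(t,y,u)-\phi(t,y',u)|$.

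For the lower bound in \ttnn{(i)} and the growth bound \ttnn{(ii)} I use that, by construction, $(f(t,x,u),\ell(t,x,u))\in Ep(t,x)=\ttnn{epi }H^*(t,x,\cdot)$ for every $u$, so $f(t,x,u)\in D(t,x)$ and $\ell(t,x,u)\geq H^*(t,x,f(t,x,u))$; then for a.e.\ $t$, \ttnn{H.2.5} gives $\varphi(t)\leq H^*(t,x,f(t,x,u))\leq\tilde\varphi(t)(1+|x|)$, whence $\ell(t,x,u)\geq\varphi(t)$ (take for the $\scr L^1$-minorant the function $\varphi$ itself). For \ttnn{(ii)}, $|f(t,x,u)|\leq\tilde c(t)(1+|x|)$ because $D(t,x)\subset\tilde c(t)(1+|x|)\bb B$ by Lemma~\ref{lemma1_sec_hamil}\,\ttnn{(ii)}; and for $u\in\bb B$, Remark~\ref{remarks_sec_rep}\,(d) --- together with $\gamma(t,x)\leq\tilde\varphi(t)(1+|x|)$, with $|H(t,0,0)|\leq\tilde\varphi(t)+|\varphi(t)|$ (from \ttnn{H.2.5} at $x=0$), and with $H^*(t,x,f(t,x,u))\leq\tilde\varphi(t)(1+|x|)$ --- bounds $\ell(t,x,u)$ on both sides by $c_1(t)(1+|x|)$ for a suitable $c_1\in\scr L^1_{loc}$; set $c:=\tilde c+c_1$.

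It remains to prove the Lipschitz bound \ttnn{(iv)}, which also supplies the missing continuity in \ttnn{(iii)}. Choosing $v=u\in\bb B$ in Theorem~\ref{theo_rep_H}\,\ttnn{(iii)$'$} and using \ttnn{H.2.2} ($C_r\leq C$) gives $|\phi(t,x,u)-\phi(t,y,u)|\leq 5(n+1)(C(t)|x-y|+|\eta(t,x)-\eta(t,y)|)$, so everything reduces to the weight $\eta(t,\cdot)$ being locally Lipschitz in $x$ with locally integrable constant. This is where \ttnn{H.2.5} is essential: on $D(t,x)$ one has $|q|\leq\tilde c(t)(1+|x|)$ and $|H^*(t,x,q)|\leq\max\{|\varphi(t)|,\tilde\varphi(t)(1+|x|)\}$, so $\norm{Gr(t,x)}\leq r(t,x):=(\tilde c(t)+\tilde\varphi(t))(1+|x|)+|\varphi(t)|$, and the construction behind Theorem~\ref{theo_rep_H} and Proposition~\ref{prop_function_phi} --- project $\eta(t,x)u$ onto $Ep(t,x)$, then take its Steiner point --- keeps all of its asserted properties when the canonical weight $\norm{Gr(t,\cdot)}$ is replaced by any positive, $t$-measurable function dominating it (the Steiner- and projection-Lipschitz estimates are insensitive to the weight, and the inclusion $Gr(t,x)\subset\phi(t,x,\bb B)$ in the proof of Theorem~\ref{teo_rep_convex_sets}\,\ttnn{(iv)} only uses that the weight is $\geq\norm{Gr(t,x)}$). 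With $\eta=r$ one has $|\eta(t,x)-\eta(t,y)|\leq(\tilde c(t)+\tilde\varphi(t))|x-y|$, so \ttnn{(iv)} holds with $k:=5(n+1)(C+\tilde c+\tilde\varphi)\in\scr L_{\ttnn{loc}}$.

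The step I expect to be delicate is precisely this replacement of the weight. The canonical choice $\norm{Gr(t,x)}$ --- equivalently the expression $\tilde c(t)(1+|\cdot|)+\gamma(t,\cdot)+|H(t,\cdot,0)|$ from Theorem~\ref{theo_rep_H}\,\ttnn{(iii)$'$} --- is not visibly Lipschitz in $x$, since $\gamma(t,x)=\max\{0,\sup_{q\in D(t,x)}H^*(t,x,q)\}=\max\{0,\sup_{p}(\sigma_{D(t,x)}(p)-H(t,x,p))\}$ may be approached only as $|p|\to\infty$, while the $x$-modulus of continuity of $p\mapsto H(t,\cdot,p)$ in \ttnn{H.1.2} deteriorates like $(1+|p|)$; so comparing the two quantities at finite $p$ via Lemma~\ref{D_Lip} and \ttnn{H.1.2--3} does not close the estimate. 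The two-sided control on $H^*(t,x,\cdot)$ in \ttnn{H.2.5} is exactly what circumvents this, either through the Lipschitz majorant $r(t,x)$ above or via a (careful) direct proof that $x\mapsto\gamma(t,x)$ is locally Lipschitz with locally integrable constant. Beyond this point the argument is bookkeeping within the integrability classes $\scr L^1_{loc}$ and $\scr L_{\ttnn{loc}}$.
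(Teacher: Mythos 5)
The paper's own proof of this lemma is a one-line appeal to Theorem \ref{theo_rep_H} and the standing assumptions, so your write-up is necessarily more explicit; on (i), the lower bound, (ii) and the closedness part of (iii) it is the routine unpacking the paper intends (Carath\'eodory measurability, $(f,\ell)(t,x,u)\in Ep(t,x)$, $D(t,x)\subset\tilde c(t)(1+|x|)\bb B$ from Lemma \ref{lemma1_sec_hamil}, H.2.5, and Remark \ref{remarks_sec_rep}(d)), and it is correct. Where you genuinely depart from the paper is (iv): you observe, rightly, that the weight $\eta(t,\cdot)=\tilde c(t)(1+|\cdot|)+\gamma(t,\cdot)+|H(t,\cdot,0)|$ of Theorem \ref{theo_rep_H}(iii)$'$ is not visibly Lipschitz in $x$ (the identity $\gamma(t,x)=\max\{0,\sup_p(\sigma_{D(t,x)}(p)-H(t,x,p))\}$ does not close via H.1.2 and Lemma \ref{D_Lip} because the supremum may only be approached as $|p|\to\infty$), and you repair this by re-running Theorem \ref{teo_rep_convex_sets} with $\delta(t,x)=r(t,x)\bb B$, where $r(t,x)=(\tilde c(t)+\tilde\varphi(t))(1+|x|)+|\varphi(t)|\mageq \norm{Gr(t,x)}$ by H.2.5 and Lemma \ref{lemma1_sec_hamil} --- exactly the selection pattern of the Corollary to Theorem \ref{teo_rep_convex_sets}. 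Then $\eta=r$ is Lipschitz in $x$, conclusion (iv) of Theorem \ref{teo_rep_convex_sets} still gives $Gr(t,x)\subset Ep(t,x)\cap r(t,x)\bb B\subset\phi(t,x,\bb B)$, so properties (ii)$'$, (iv)$'$, (v)$'$ of Theorem \ref{theo_rep_H} survive for this representation, and (iv) of the lemma follows with modulus $5(n+1)(C+\tilde c+\tilde\varphi)$, which also yields the Hausdorff continuity in (iii). This is a legitimate, and for the paper's canonical weight arguably necessary, refinement: it expresses the $x$-Lipschitz constant purely through the data $C,\tilde c,\tilde\varphi$ of H.2.2--3,5, whereas the paper's one-liner leaves the $x$-regularity of $\gamma(t,\cdot)$ unaddressed.

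One caveat: the final step is not mere bookkeeping, as you suggest. You conclude $c\in\scr L^1_{loc}$ in (ii) and $k=5(n+1)(C+\tilde c+\tilde\varphi)\in\scr L_{\ttnn{loc}}$ in (iv), but H.2.3 ($=$ H.1.3) only makes $\tilde c$ measurable and H.2.5 only gives $\tilde\varphi\in\scr L^1_{\ttnn{loc}}$; neither $c\in\scr L^1_{loc}$ nor, a fortiori, membership of $k$ in the uniform-integrability class $\scr L_{\ttnn{loc}}$ follows from the hypotheses as literally stated. This gap is shared with the paper itself (whose proof is a bare assertion), and closing it requires strengthening H.2.3/H.2.5 to $\tilde c,\tilde\varphi\in\scr L_{\ttnn{loc}}$ (resp.\ at least $\scr L^1_{\ttnn{loc}}$ for (ii)); apart from flagging this explicitly rather than dismissing it, your argument is sound.
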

	\begin{proof}
		All the conclusions follows from our assumptions and Theorem \ref{theo_rep_H}.
	\qed\end{proof}

	Next, we recall the definition of locally absolutely continuity for set-valued maps.
	
	\begin{definition}\rm
		A set-valued map $S:\bb R^+\rightsquigarrow \bb R^k$  is called \textit{locally absolutely continuous} (briefly \textit{l.a.c.}) if it takes non-empty closed images and every $\eps>0$, any $[t,T]\subset \bb R^+$, and any compact subset $K\subset \bb R^k$, there exists $\delta>0$ such that for any finite partition $t\mineq t_1<\tau_1\mineq t_2<\tau_2\mineq ...\mineq t_m<\tau_m\mineq T$ of $[t,T]$ satisfying $	\sum_{i=1}^m
		(\tau_i-t_i)<\delta$ holds
		
		$\sum_{i=1}^m  \max\graffe{ e({S(t_i)},S(\tau_i)\cap K)\,,\, e({S(\tau_i)},S(t_i)\cap K)}<\eps$,

		\noindent where $e(E,E'):=\inf \graffe{r>0\,:\, E'\subset E+r \bb B}$ for all $E, E'\subset \bb R^k$ ($\inf\,\emptyset :=+\infty, \ttnn{ by convention}$).
	\end{definition}
	
	\begin{proposition}\label{v_uguale_inf_int_H_star}
		Assume \ttnn{H.2.1-6} and \ttnn{{O.P.C.}} Denote by $W:\bb R^+\times \bb R^n\ra \bb R\cup \graffe{+\infty}$ the value function of the following infinite horizon control problem under state constraints:	\ttnn{minimize  $\, \int_t^{+\infty} L(s,\xi\rs,u\rs)\,ds  $ over all $(\xi\ccd,u\ccd)\in \scr U_\Omega(t,x) $}	\ttnn{such that $\xi(t)=x$, where $(t,x)\in \bb R^+\times \bb R^n$ is the initial datum.}

		Then:

		\margin{\hspace{-6mm}\ttnn{(i)} $W(t,x)=\inf \;\{\,\int_t^{+\infty} H^*(s,\xi\rs,\xi'\rs)\,ds  \,|\, \xi\in \scr  W^{1,1}_{loc}(t,+\infty;\bb R^n),\,\xi(t)=x,$ $\, \xi\ccd\subset \Omega \}$, for any $(t,x)\in \bb R^+\times \bb R^n$ such that $\scr U_\Omega(t,x)\neq \emptyset$;
		}
		
		\margin{\hspace{-7mm}\ttnn{(ii)} $W$ and $v$ are lower semicontinuous and $t\rightsquigarrow \ttnn{epi }W(t,.)$ is l.a.c.;
		}
		
		\margin{\hspace{-8mm}\ttnn{(iii)} there exists a set $ C'\subset (0,+\infty)$, with $\mu((0,+\infty)\backslash C'))=0$, such that for any $(t,x)\in \ttnn{dom}\,v \cap (C'\times \ttnn{int}\,\Omega)$
			\eee{
				\forall u\in \bb B,\quad D_\uparrow v(t,x)(-1,-f(t,x, u))\mineq \ell(t,x, u);
			}
		}
		
		\margin{\hspace{-8mm}\ttnn{(iv)} there exists a set $ C''\subset (0,+\infty)$, with $\mu((0,+\infty)\backslash C''))=0$, such that for any $(t,x)\in \ttnn{dom}\,v \cap (C''\times \ttnn{int}\,\Omega)$
			\eee{
				\forall u\in \bb B,\quad -\ell(t,x, u)\mineq D_\downarrow v(t,x)(1,f(t,x, u)).
			}
		}

	\end{proposition}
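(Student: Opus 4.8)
The plan is to handle the four assertions in turn, using throughout the representation $(f,\ell)$ of Theorem~\ref{theo_rep_H} together with the elementary properties recorded in Lemma~\ref{lemma_ipotesi_su_L}. For \ttnn{(i)}, the inequality ``$\le$'' is immediate: if $(\xi,u)\in\scr U_\Omega(t,x)$ then, by the linear growth in Lemma~\ref{lemma_ipotesi_su_L}\,\ttnn{(ii)}, $\xi\in\scr W^{1,1}_{loc}(t,+\infty;\bb R^n)$ with $\xi(\cdot)\subset\Omega$, $\xi(t)=x$, and $L(s,\xi(s),u(s))=H^*(s,\xi(s),\xi'(s))$ by definition, so $\xi$ is admissible for the calculus of variations problem with the same cost. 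For ``$\ge$'' I would start from an admissible arc $\xi$ of finite cost $\int_t^{+\infty}H^*(s,\xi(s),\xi'(s))\,ds$; by \ttnn{H.2.5} the integrand is finite a.e., hence $\xi'(s)\in D(s,\xi(s))$ a.e., and since $Gr(s,\xi(s))\subset\phi(s,\xi(s),\bb B)$ by Theorem~\ref{theo_rep_H}\,\ttnn{(v)}$'$, for a.e.\ $s$ the set of $u\in\bb B$ with $f(s,\xi(s),u)=\xi'(s)$ and $\ell(s,\xi(s),u)=H^*(s,\xi(s),\xi'(s))$ is non-empty and closed. A Filippov-type measurable selection — the constraint data being Carath\'eodory by Lemma~\ref{lemma_ipotesi_su_L}\,\ttnn{(i),(iii)} and $s\mapsto H^*(s,\xi(s),\xi'(s))$ measurable since $H^*$ is jointly measurable (being, by continuity of $H$ in the last variable, a countable supremum of Carath\'eodory functions) — then produces a control $u(\cdot)\subset\bb B$ with $(\xi,u)\in\scr U_\Omega(t,x)$ and equal cost, giving ``$\ge$''. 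The same selection, combined with $H^*(t,x,f(t,x,u))\le\ell(t,x,u)$ (see \rif{H_star_min_ell}), also yields $v=W$, which I use below.

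For \ttnn{(ii)} I would first establish lower semicontinuity of $W$, hence of $v$ (by $v=W$), through the calculus of variations formulation just proved. Given $(t_k,x_k)\to(t,x)$ and almost optimal arcs $\xi_k$, the growth bound of Lemma~\ref{lemma_ipotesi_su_L}\,\ttnn{(ii)} with Gronwall's inequality yields local uniform boundedness of $\xi_k$ and equi-integrability of $\xi_k'$ on compact intervals, so along a subsequence $\xi_k\to\xi$ uniformly on compacts, $\xi_k'$ converges weakly in $L^1$ on compacts to $\xi'$, $\xi(t)=x$, and $\xi(\cdot)\subset\Omega$ ($\Omega$ closed); the lower bound $\varphi\in\scr L^1(0,+\infty;\bb R)$ of \ttnn{H.2.5}, the convexity of $H^*(s,y,\cdot)$, and its joint lower semicontinuity in $(y,v)$ let me apply an Ioffe/Tonelli lower semicontinuity theorem on compact intervals (the tail being controlled by Fatou's lemma and $\varphi$) to conclude $W(t,x)\le\int_t^{+\infty}H^*(s,\xi,\xi')\,ds\le\liminf_kW(t_k,x_k)$. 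For the local absolute continuity of $t\rightsquigarrow\ttnn{epi}\,W(t,\cdot)$ I would verify, via Lemma~\ref{lemma_ipotesi_su_L}, \ttnn{H.2.6} and \ttnn{O.P.C.}, the hypotheses of the uniform neighbouring feasible trajectory (NFT) results of \cite{bascofrankowska2018lipschitz} for the reversed-time dynamics $F(s,x)=-f(T-s,x,\bb B)$ on each interval $[0,T]$, and then read the l.a.c.\ estimate off the resulting uniform cost--perturbation bounds.

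For \ttnn{(iii)} and \ttnn{(iv)} I would take $C'$ and $C''$ to be the common Lebesgue points of the integrable data of Lemma~\ref{lemma_ipotesi_su_L}\,\ttnn{(ii),(iv)} and of $s\mapsto(f(s,q,w),\ell(s,q,w))$ as $q,w$ range over a countable dense set, and then argue by the dynamic programming principle for $v$. For \ttnn{(iii)}, fix $u\in\bb B$ and integrate $\xi'(s)=f(s,\xi(s),u)$ backwards from $\xi(t)=x$: for $h>0$ small the arc stays in $\Omega$ (as $x\in\ttnn{int}\,\Omega$) and $y_h:=\xi(t-h)=x-hf(t,x,u)+o(h)$, and prepending this arc to an almost optimal trajectory from $(t,x)$ — available since $(t,x)\in\ttnn{dom}\,v$ and $v(t,x)$ is finite because $\ell$ is bounded below by an $\scr L^1(0,+\infty;\bb R)$ function — gives $v(t-h,y_h)\le\int_{t-h}^{t}\ell(s,\xi(s),u)\,ds+v(t,x)$; dividing by $h$, letting $h\to0+$ along this sequence, and using lower semicontinuity of $d\mapsto D_\uparrow v(t,x)(d)$ to pass from the countable dense set of controls to all $u\in\bb B$, yields \ttnn{(iii)}. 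For \ttnn{(iv)}, integrating the same equation forwards gives $z_h:=\xi(t+h)=x+hf(t,x,u)+o(h)$; since $\scr U_\Omega(t,x)\neq\emptyset$, $z_h\to x$ and $t+h\to t$, the NFT results used in \ttnn{(ii)} guarantee $\scr U_\Omega(t+h,z_h)\neq\emptyset$, hence $v(t+h,z_h)<+\infty$, and the ``prepend'' inequality $v(t,x)\le\int_{t}^{t+h}\ell(s,\xi(s),u)\,ds+v(t+h,z_h)$ gives, after dividing by $h$, passing to $\limsup$, and using upper semicontinuity of $d\mapsto D_\downarrow v(t,x)(d)$, the bound $-\ell(t,x,u)\le D_\downarrow v(t,x)(1,f(t,x,u))$.

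I expect the \textbf{main obstacle} to be the local absolute continuity of $t\rightsquigarrow\ttnn{epi}\,W(t,\cdot)$ in \ttnn{(ii)}: this is where the full strength of \ttnn{O.P.C.} and \ttnn{H.2.6} is needed, through uniform neighbouring feasible trajectory estimates valid on every compact interval and stable under perturbations of the endpoint. Once such an estimate is available, the semicontinuity statements and the infinitesimal dynamic programming inequalities \ttnn{(iii)}--\ttnn{(iv)} follow from the measurable-selection, Gronwall, Ioffe--Tonelli and Lebesgue-point arguments sketched above.
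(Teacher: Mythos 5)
Your proposal is essentially correct and structurally close to the paper's proof. In the paper, (i) is simply attributed to the literature (Bardi--Capuzzo-Dolcetta) and the lower semicontinuity plus the l.a.c.\ of $t\rightsquigarrow \ttnn{epi}\,W(t,.)$ in (ii) are cited from \cite{bascofrankowska2018lipschitz,bascofrank2019hjbinfhor}, exactly the results you defer to; your measurable-selection argument for (i) via Theorem \ref{theo_rep_H}\,(iv)$'$--(v)$'$ and your identification $v=W$ are what underlie the cited facts (the latter is proved inside Proposition \ref{proposition1lipschitz}). The genuine difference is in (iii)--(iv): the paper produces the short arcs with prescribed terminal (resp.\ initial) velocity by applying the measurable viability theorem \cite[Theorem 2.9]{frankowskaplaskrze1995measviabth} to the reversed, cost-augmented dynamics $-G(j-s,\xi)$, with $G$ as in \rif{def_G}, together with the measurable selection theorem, and then uses the dynamic programming principle; you instead freeze the control, solve $\xi'=f(s,\xi,u)$ backwards/forwards, and do the Lebesgue-point bookkeeping by hand (Lebesgue points of $k$, $c$ and of $s\mapsto(f(s,q,w),\ell(s,q,w))$ over a countable dense set, then the lower/upper semicontinuity of $D_\uparrow v(t,x)(\cdot)$ and $D_\downarrow v(t,x)(\cdot)$ in the direction). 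Your route is more elementary and self-contained; the paper's invocation of the viability theorem packages the same mechanism and additionally delivers the augmented trajectory $(\xi,\beta)$ directly.

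One step you should drop or repair: in (iv) you invoke the neighbouring-feasible-trajectory results to claim $\scr U_\Omega(t+h,z_h)\neq\emptyset$. On infinite horizon, O.P.C.\ does not guarantee existence of feasible trajectories --- the paper states this explicitly in Remark \ref{remark_dopo_theo_viscosity}(b) --- so that claim is unjustified as stated. Fortunately it is also unnecessary: if $v(t+h,z_h)=+\infty$ then the difference quotient $\bigl(v(t+h,z_h)-v(t,x)\bigr)/h$ equals $+\infty$ and the desired lower bound on the contingent hypoderivative (a $\limsup$) holds trivially, while if it is finite your concatenation/DPP inequality applies; so your argument for (iv) goes through once the NFT appeal is removed.
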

	\begin{proof}
		The statement (i) is a known fact (cfr. \cite{bardicapuzzodolcetta1997hjbeviscosity}), and for the lower semicontinuity of $W$ and $v$ and the locally absolutely continuity of the epigraph of the value function $W$, under the viability condition \ttnn{{O.P.C.}}, we refer to \cite{bascofrankowska2018lipschitz,bascofrank2019hjbinfhor}. So, (ii) holds.
		
		Let us define for all $ t\in \bb R^+$ and all $x\in \bb R^n$
		\equazioneref{def_G}{
			G(t,x)&:= \{(f(t,x,u),-\ell(t,x,u)-r)\,|\, u\in \bb B,\,\\
			&\hspace{4.95cm} r\in [0,  c\rt(1+\modulo{x})-\ell(t,x,u)]    \}.
		}
		\noindent Next, we prove (iii). Let $j\in \bb N^+$. Recalling Lemma \ref{lemma_ipotesi_su_L}, we apply \cite[Theorem 2.9]{frankowskaplaskrze1995measviabth} to the set-valued map $[1/j,j]\times \bb R^n\times \bb R\ni (s,\xi,\beta)\rightsquigarrow -G(j-s,\xi)\in \bb R^n\times \bb R$  and the measurable selection theorem: there exists a subset $C_j'\subset [1/j,j]$, with $\mu(C_j')=0$, such that for any $(t_0,x_0)\in ((1/j,j]\backslash C_j')\times \ttnn{int}\,\Omega$ and any $u_0\in \bb B$ we can find $t_1\in [1/j,t_0)$ and a trajectory-control pair $((\xi,\beta),(u,r))\ccd$ satisfying
		\sistemanoref{
			(\xi ,\beta )'\rt=(f(t,\xi\rt,u\rt),-\ell(t,\xi\rt,u\rt)-r\rt) &t\in [t_1,t_0]\ttnn{ a.e.}\\
			(u,r)\rt\in \bb B\times [0,c\rt(1+\modulo{\xi\rt})-\ell(t,\xi\rt,u\rt)] &t\in [t_1,t_0]\ttnn{ a.e.}\\
			(\xi,\beta)(t_0)=(x_0,0)\\
			(\xi,\beta)'(t_0)=(f(t_0,x_0,u_0),-\ell(t_0,x_0,u_0)),
		}
		and	$\xi([t_1,t_0])\subset \Omega.$ Hence, if $(t_0,x_0)\in \ttnn{dom}\,v$, by the dynamic programming principle it follows that $\dfrac{v(s,\xi\rs)-v(t_0,x_0)}{t_0-s}\mineq \dfrac{1}{t_0-s}(\beta(s)-\beta(t_0))$ for all $s\in [t_1,t_0]$. Passing to the lower limit as $s\ra t_0-$ and using the lower semicontinuity of $v$, we conclude $D_\uparrow v(t_0,x_0)(-1,-f(t_0,x_0,u_0))\mineq \ell(t_0,x_0,u_0).$
		
		\noindent Since $u_0\in \bb B$ is arbitrary, the statement (iii) follows with $C'=(0,+\infty)\backslash \cup_{j\in \bb N}C'_j$. 	The statement (iv) holds as well  arguing in a similar way.
	\qed\end{proof}
	\begin{remark}
	Notice that, from \cite[Proposition 4.4]{bascofrank2019hjbinfhor} and Proposition \ref{v_uguale_inf_int_H_star}, under the assumptions H.2.1-6 and O.P.C.,  the set-valued map $t \rightsquigarrow \ttnn{epi}\, v(t,.)$ is l.a.c. even though $v$ may be discontinuous.

	\end{remark}

	\noindent {\bfseries{\ref{sec_inf_hor_control_prob}.2.}} \textbf{Viability of hypograph.} Next lemma provides a viability result of the hypograph of weak solutions.

	\begin{lemma}\label{lemma_parte_interna_2}
		Assume \ttnn{H.2.1-5}. Let $V:\bb R^+ \times \Omega\ra \bb R \cup \graffe{+\infty}$ be such that
		\eee{
			t\rightsquigarrow \graffe{(x,\lambda )\in \Omega\times \bb R\,|\, \lambda\mineq V(t,x)<+\infty }\ttnn{ is l.a.c.}
		}
		If there exists a set $E'\subset (0,+\infty)$, with $\mu((0,+\infty)\backslash E')=0$, such that
		\equazioneref{H_parte_interna}{
			&-p_t+\sup_{u\in \bb B} \graffe{\ps{f(t,x,u)}{-p_x} +q\ell(t,x,u)  }\mineq 0\\
			&  \forall\,(p_t,p_x,q)\in {T_{\ttnn{hypo}\, V}(t,x,V(t,x))}^+,\forall (t,x)\in \ttnn{dom}\,V\cap(E'\times \ttnn{int}\,\Omega),
		}
		then for all $0< \tau_0<\tau_1$ and any feasible trajectory-control pair $( \xi\ccd, u\ccd)$ on $I=[\tau_0,\tau_1]$, with $ \xi([\tau_0,\tau_1])\subset \ttnn{int}\,\Omega$ and $(\tau_0,\xi(\tau_0))\in \ttnn{dom}\,V$, we have
		\eee{
			( \xi\rt,V(\tau_0,\xi(\tau_0))-\int_{\tau_0}^{t} \ell(s, \xi\rs, u\rs)ds)\in \ttnn{hypo}\,V(t,.)\quad \forall t\in [\tau_0,\tau_1].
		}

	\end{lemma}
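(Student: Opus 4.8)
The plan is to establish this by a measurable--time viability (invariance) argument, showing that a suitably \emph{augmented} trajectory remains inside the locally absolutely continuous tube carved out by the truncated hypograph of $V$. Fix $0<\tau_0<\tau_1$ and a feasible pair $(\xi,u)$ on $[\tau_0,\tau_1]$ with $\xi([\tau_0,\tau_1])\subset\ttnn{int}\,\Omega$ and $(\tau_0,\xi(\tau_0))\in\ttnn{dom}\,V$; recall that $u\rt\in\bb B$ for a.e.\ $t$. Set
\[
\beta(t):=V(\tau_0,\xi(\tau_0))-\int_{\tau_0}^{t}\ell(s,\xi\rs,u\rs)\,ds,\qquad t\in[\tau_0,\tau_1],
\]
so that $(\xi,\beta)$ is locally absolutely continuous with $(\xi,\beta)'\rt=\big(f(t,\xi\rt,u\rt),-\ell(t,\xi\rt,u\rt)\big)$ for a.e.\ $t$, and let $\mathcal K(t):=\graffe{(x,\lambda)\in\Omega\times\bb R\,|\,\lambda\mineq V(t,x)<+\infty}$, which is l.a.c.\ with non-empty closed images by hypothesis. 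Since $(\tau_0,\xi(\tau_0))\in\ttnn{dom}\,V$ we have $(\xi(\tau_0),\beta(\tau_0))\in\mathcal K(\tau_0)$, and the conclusion follows once it is shown that $(\xi(t),\beta(t))\in\mathcal K(t)$ for every $t\in[\tau_0,\tau_1]$, since this gives $\beta(t)\mineq V(t,\xi(t))$, i.e.\ $(\xi(t),\beta(t))\in\ttnn{hypo}\,V(t,\cdot)$.

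The next step is to reinterpret \rif{H_parte_interna}. Viewing $\ttnn{hypo}\,V$ as a subset of $\bb R^+\times\bb R^n\times\bb R$ with time as first coordinate (i.e.\ adjoining the state equation $\dot t=1$), and using $C^{+}=-C^{-}$, the bipolar relation, and the fact that $(0,0,-1)\in T_{\ttnn{hypo}\,V}(t,x,V(t,x))$ (so that each $n\in T_{\ttnn{hypo}\,V}(t,x,V(t,x))^{-}$ has nonnegative $\lambda$-component), a direct computation shows that, for $(t,x)\in\ttnn{dom}\,V\cap(E'\times\ttnn{int}\,\Omega)$, inequality \rif{H_parte_interna} is equivalent to
\[
\ps{n}{(1,v,-\mu)}\mineq 0\qquad\forall\,n\in T_{\ttnn{hypo}\,V}(t,x,V(t,x))^{-},\ \forall\,(v,\mu)\in G(t,x),
\]
where $G(t,x)$ is the \emph{convex} set of \rif{def_G}: indeed the downward closure of $\graffe{(f(t,x,u),-\ell(t,x,u))\,|\,u\in\bb B}$ coincides with $G(t,x)$ up to the growth truncation, because $f(t,x,\bb B)=D(t,x)$ and $Gr(t,x)\subset\phi(t,x,\bb B)$ by Theorem \ref{theo_rep_H}\,(iv)$'$--(v)$'$, while $Ep(t,x)$ is convex. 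In particular the velocity $\big(1,\xi'\rt,\beta'\rt\big)$ of the augmented curve belongs, fiberwise, to this convex set. Thus \rif{H_parte_interna} is exactly the dual tangency hypothesis for invariance of $\ttnn{hypo}\,V$ — equivalently of the tube $\mathcal K(\cdot)$ — under the augmented dynamics $(t,x,\lambda)\rightsquigarrow\{1\}\times G(t,x)$. The regularity needed ($f,\ell$ measurable in $t$, locally Lipschitz in $x$ with a constant $k\in\scr L_{\ttnn{loc}}$, sublinear growth; $\Omega$ closed; $\mathcal K(\cdot)$ l.a.c.) is supplied by Lemma \ref{lemma_ipotesi_su_L} and the hypotheses.

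With these ingredients I would invoke the measurable--time viability/invariance theorem for l.a.c.\ set-valued maps, in the form of \cite{frankowskaplaskrze1995measviabth} and as used for epigraph tubes in \cite{bascofrank2019hjbinfhor}: localizing the data to a compact neighbourhood of the compact arc $\graffe{(\xi(t),\beta(t))\,|\,t\in[\tau_0,\tau_1]}\subset\ttnn{int}\,\Omega\times\bb R$, the curve $(\xi,\beta)$ starts in $\mathcal K(\tau_0)$ and its a.e.\ velocity obeys the displayed tangency condition, whence $(\xi(t),\beta(t))\in\mathcal K(t)$ for all $t\in[\tau_0,\tau_1]$, which is the assertion. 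The main obstacle is making the first two steps fit together rigorously: (a) one must use the ``downward-closed'' structure of the hypograph so that the $T^{+}$-form of \rif{H_parte_interna} genuinely yields tangency of the \emph{convexified} velocity set $G(t,x)$ (not merely a weaker contingent inclusion), which is where the convexity of $Ep(t,x)$ and the inclusion $Gr(t,x)\subset\phi(t,x,\bb B)$ are essential; (b) because $t\rightsquigarrow\mathcal K(t)$ is only l.a.c., not Lipschitz, the underlying ``distance-to-the-tube'' Gronwall estimate must absorb the l.a.c.\ modulus of $\mathcal K(\cdot)$ together with the $\scr L_{\ttnn{loc}}$-integrable Lipschitz constant $k$ of Lemma \ref{lemma_ipotesi_su_L}, which is exactly why the localization to a compact tube is made; and (c) one must treat the times at which $V(t,\xi(t))=+\infty$, where the required inclusion is automatic but $\mathcal K(t)$ has no ``upper face'', using that $\xi$ stays in $\ttnn{int}\,\Omega$ and that \rif{H_parte_interna} is imposed at \emph{every} finite point of $\ttnn{dom}\,V(t,\cdot)$ with $x\in\ttnn{int}\,\Omega$, not only at interior points of the domain.
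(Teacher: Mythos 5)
Your first two steps coincide with the paper's: you augment the trajectory with $\beta(t)=V(\tau_0,\xi(\tau_0))-\int_{\tau_0}^t\ell(s,\xi(s),u(s))\,ds$ and you dualize \rif{H_parte_interna}, via bipolarity and the downward-closed structure of the hypograph, into the tangency statement $\graffe{1}\times G(t,x)\subset \overline{\ttnn{co}}\,T_{\ttnn{hypo}\,V}(t,x,\beta)$ for all $\beta\mineq V(t,x)$ and $(t,x)\in \ttnn{dom}\,V\cap(E'\times\ttnn{int}\,\Omega)$, which is exactly how the paper's proof begins (your display $\ps{n}{(1,v,-\mu)}\mineq 0$ carries a sign slip, since the second component of elements of $G$ in \rif{def_G} is already $-\ell-r$, but that is cosmetic). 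The genuine gap is the last step: you treat the passage from this tangency to invariance of the tube as an appeal to an off-the-shelf ``measurable-time invariance theorem for l.a.c.\ tubes,'' localized to a compact spatial neighbourhood of the arc. No such direct application is available here, and your own obstacle (b) is precisely where the proof's work lies. The tangency \rif{1_f_L} is only known at tube points whose state component lies in $\ttnn{int}\,\Omega$ (and in $\ttnn{dom}\,V$), not on all of the tube $\graffe{(x,\lambda)\,|\,\lambda\mineq V(t,x)<+\infty}$; intersecting the l.a.c.\ tube with a compact neighbourhood of the arc alters the contingent cones at the newly created relative boundary, so the hypotheses of an invariance theorem are not verified for the truncated tube either. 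Moreover, what is used elsewhere in the paper from \cite{frankowskaplaskrze1995measviabth} (Theorem 2.9, in Proposition \ref{v_uguale_inf_int_H_star}) is a viability (existence) theorem, which would not force your particular trajectory-control pair to remain in the tube.

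The paper instead proves the invariance by hand, and the localization is in time, not in space: only \cite[Lemma 4.8]{frankowskaplaskrze1995measviabth} is invoked, to get absolute continuity of $g(s):=d_{Q(s)}\bigl((\xi(s),w(s))\bigr)$ where $Q(s)=\ttnn{hypo}\,V(s,\cdot)$; then, arguing by contradiction, one takes $t^*$ the last time $g$ vanishes before a point where $g>0$, observes that on a short interval $(t^*,t^*+\eps]$ the nearest points $(p(s),r(s))\in Q(s)$ automatically have $p(s)\in\ttnn{int}\,\Omega$ (because $\xi(s)\in\ttnn{int}\,\Omega$ and $g$ is small there), so the convexified tangency \rif{1_f_L} may be used at those points, and a direct computation along tangent directions gives $g'(s)\mineq \ps{\zeta}{(f(s,\xi(s),u(s)),-\ell(s,\xi(s),u(s)))-(f(s,p(s),u(s)),-\ell(s,p(s),u(s)))}\mineq k(s)\,g(s)$ with $k\in\scr L_{\ttnn{loc}}$ from Lemma \ref{lemma_ipotesi_su_L}-(iv); Gronwall then forces $g\equiv 0$ on $[t^*,t^*+\eps]$, the desired contradiction. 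Your proposal identifies the right objects and the right dual reformulation, but the distance-function differential inequality together with this time-localization at $t^*$ — the substance of the lemma — is not carried out, so as it stands the argument is incomplete.
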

	
	\begin{proof}
		
		Notice that, by the separation theorem, \rif{H_parte_interna} is equivalent to $\graffe{1}\times G(t,x)\subset {\overline{\ttnn{co}}}\,\,T_{\ttnn{hypo}\,V}(t,x,\beta)$ for all $\beta\mineq  V(t,x)$ and all $(t,x)\in (E'\times \ttnn{int}\,\Omega)\cap \ttnn{dom}\,V$, where we defined

		Let $0< \tau_0<\tau_1$ and put $Q(s):=\ttnn{hypo}\,V( s,.)$ for any $s\in [\tau_0,\tau_1]$. We have
		\equazioneref{1_f_L}{
			(1,  f(s,x,u),  -\ell(s,x,u))\in  {\overline{\ttnn{co}}}\,T_{\ttnn{graph}\,Q}(s,x,\beta)
		}
		for a.e. $s\in[\tau_0,\tau_1]$, any $(x,\beta)\in Q(s)\cap (\ttnn{int}\,\Omega\times \bb R)$, and any $u\in \bb B$.  Consider a trajectory-control pair $( \xi\ccd, u\ccd)$ solving \rif{sistemacontrollo} on $I=[\tau_0,\tau_1]$, with $ \xi([\tau_0,\tau_1])\subset \ttnn{int}\,\Omega$ and $(\tau_0,\xi(\tau_0))\in \ttnn{dom}\,V$. We claim that $d_{Q\rs}((\xi\rs, u\rs))=0$ for all $s\in I$, where   $ w(.)$ is the unique solutions of  
		
		$
			w'\rt=-\ell(t, \xi\rt, u\rt)\ttnn{ for a.e. }t\in [\tau_0,\tau_1],\quad w(\tau_0)=V(\tau_0, \xi(\tau_0)).
		$
		
		\noindent Putting $g\rs=d_{Q\rs}((\xi\rs, w\rs))$, applying \cite[Lemma 4.8]{frankowskaplaskrze1995measviabth} and Lemma \ref{lemma_ipotesi_su_L} to the single-valued map $s\rightsquigarrow \graffe{(  f(s,\xi\rs,  u\rs),  -\ell(s,\xi\rs,  u\rs))}$, we have that $g\ccd$ is absolutely continuous. Let for any $s\in I$ the pair $(p\rs,r\rs)\in Q\rs$ be such that
		
		$g\rs=\modulo{(\xi\rs, w\rs)-(p\rs,r\rs)}$.
		
		\noindent We claim that $g\rs= 0$ for all $s\in (\tau_0,\tau_1]$. Suppose, by contradiction, that we can find $T\in (\tau_0,\tau_1]$ with $g(T)>0$. Denoting $t^*=\sup\graffe{t\in [\tau_0,T]\,:\, g\rt=0}$, let $\eps>0$ be such that $p\rs\in \ttnn{int}\,\Omega$ and $g\rs>0$ for any $s\in (t^*,t^*+\eps]$. Consider $s\in (t^*,t^*+\eps)$ where $g\ccd$, $\xi\ccd$, and $  w\ccd$ are differentiable, with $\xi'\rs=  f(s,\xi\rs,  u\rs)$ and $  w'\rs=\ell (s,\xi\rs,  u\rs)$. Consider $(\theta,\lambda)\in T_{\ttnn{graph}\,Q}(s,p\rs,r\rs)$ and $\theta_i\ra \theta,\,\lambda_i\ra \lambda,\,h_i\ra0+$ satisfy $(p\rs,r\rs)+h_i\lambda_i\in Q(s+h_i\theta_i)\quad \ttnn{for all } i\in \bb N.$
		Then, setting $q=(\xi\rs,  w\rs)$ and $\bar q=(p\rs,r\rs)$, we get
		
		$
			g(s+h_i\theta_i)-g(s)	\mineq \modulo{\tonde{\xi(s+h_i\theta_i),  w(s+h_i\theta_i)}-\bar q-h_i\lambda_i}-\modulo{q-\bar q}.
		$
		
		\noindent Dividing this inequality by $h_i$, passing to the limit as $i\ra +\infty$, and putting $\zeta:=\dfrac{q-\bar q}{\modulo{q-\bar q}}$, we have
		\equazioneref{g_primo_min_eq}{
			g'\rs \theta\mineq \ps{\zeta}{\tonde{  f(s,\xi(s),  u\rs), -\ell (s,\xi(s),  u\rs)}\theta-\lambda}.
		}
		Since \rif{g_primo_min_eq} holds for any $(\theta,\lambda)\in T_{\ttnn{graph}\,Q}(s,p\rs,r\rs)$, taking convex combinations of elements in $T_{\ttnn{graph}\,Q}(s,p\rs,r\rs)$ we conclude that \rif{g_primo_min_eq} holds for all $(\theta,\lambda)\in {\overline{\ttnn{co}}}\,T_{\ttnn{graph}\,Q}(s,p\rs,r\rs)$. By \rif{1_f_L}, the inequality \rif{g_primo_min_eq} holds true for $\theta=1$ and $\lambda=\tonde{  f(s,p\rs,  u\rs),-\ell (s,p\rs,  u\rs)}.$
		Therefore, from Lemma \ref{lemma_ipotesi_su_L}-(iv),
		
		$
			g'\rs\mineq k\rs \modulo{\xi\rs-p\rs}\mineq k\rs g\rs.
		$
		
		\noindent From the Gronwall lemma we conclude that $g\ccd= 0$ on $[t^*,t^*+\eps]$, and a contradiction follows.
	\qed\end{proof}

	\noindent {\bfseries{\ref{sec_inf_hor_control_prob}.3.}} \textbf{Proof of Theorem \ref{main_cor_viscosity}.} In this section we provide a proof of Theorem \ref{main_cor_viscosity}.
	
	\begin{proposition}\label{proposition1lipschitz}
		Assume \ttnn{H.2.1-6} and \ttnn{{O.P.C.}}
		Let $V:\bb R^+ \times \Omega\ra \bb R \cup \graffe{+\infty}$ be a lower semicontinuous function, satisfying the vanishing condition at infinity \rif{vanishing_cond}, such that $\ttnn{dom}\,v(t,.)\subset \ttnn{dom}\,V(t,.)\neq \emptyset$ for all large $t>0$ and 
		\equazioneref{hypo_W_abs_cont}{
			t\rightsquigarrow \graffe{(x,\lambda )\in \Omega\times \bb R\,|\, \lambda\mineq V(t,x)<+\infty }\; \textit{is l.a.c.}
		}

		Then the following statements are equivalent:
		\enurom{
			\item $V=v$;
			
			\item  $t\rightsquigarrow \ttnn{epi}\,V(t,.)$ is l.a.c. and there exists $E\subset (0,+\infty)$, with $\mu((0,+\infty)\backslash E)=0$, such that:
			
			\ttnn{(ii.a)} $-p_t+ \sup_{u\in \bb B} \graffe{\ps{f(t,x,u)}{-p_x} +q\ell(t,x,u)  } \mageq 0$
			
			$\qquad\; \forall\,(p_t,p_x,q)\in {T_{\ttnn{epi}\, V}(t,x,V(t,x))}^-,\;\forall (t,x)\in \ttnn{dom}\,V\cap(E\times \Omega),$

			\ttnn{(ii.b)} $-p_t+\sup_{u\in \bb B} \graffe{\ps{f(t,x,u)}{-p_x} +q\ell(t,x,u)  } \mineq 0$
			
			$\qquad\;\forall\,(p_t,p_x,q)\in {T_{\ttnn{hypo}\, V}(t,x,V(t,x))}^+,\;\forall (t,x)\in \ttnn{dom}\,V\cap(E\times \ttnn{int}\,\Omega)$.
		}		
	\end{proposition}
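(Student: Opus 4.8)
\textit{Proof proposal.} The plan is to prove the two implications separately, by reading conditions (ii.a)--(ii.b) through the dictionary between contingent epi/hypo-derivatives, regular normal cones, and viability/invariance of the locally absolutely continuous tubes $t\rightsquigarrow\ttnn{epi}\,V(t,.)$ and $t\rightsquigarrow\ttnn{hypo}\,V(t,.)$ for the augmented measurable dynamics $(t,x)\rightsquigarrow\graffe{(f(t,x,u),-\ell(t,x,u))\,:\,u\in\bb B}$. After the elementary rewriting of $\min_{u\in\bb B}\ps{(p_t,p_x,q)}{(1,f(t,x,u),-\ell(t,x,u))}\mineq 0$ as $-p_t+\sup_{u\in\bb B}\graffe{\ps{f(t,x,u)}{-p_x}+q\ell(t,x,u)}\mageq 0$, condition (ii.a) is the statement that $\ttnn{epi}\,V$ is \textit{viable} for this dynamics and, symmetrically, (ii.b) that $\ttnn{hypo}\,V$ is \textit{invariant}; through the vanishing condition \rif{vanishing_cond} the former yields $V\mageq v$ and the latter $V\mineq v$.

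\textit{The implication \ttnn{(i)}$\Rightarrow$\ttnn{(ii)}.} Assume $V=v$. The local absolute continuity of $t\rightsquigarrow\ttnn{epi}\,v(t,.)$ is Proposition~\ref{v_uguale_inf_int_H_star}(ii) and the remark following it (see also \cite[Proposition~4.4]{bascofrank2019hjbinfhor}). For (ii.b): since $v$ is lower semicontinuous, $T_{\ttnn{hypo}\,v}(t,x,v(t,x))=\graffe{(\theta,w,r)\,:\,r\mineq D_\downarrow v(t,x)(\theta,w)}$, so Proposition~\ref{v_uguale_inf_int_H_star}(iv) says precisely that $(1,f(t,x,u),-\ell(t,x,u))\in T_{\ttnn{hypo}\,v}(t,x,v(t,x))$ for every $u\in\bb B$; pairing this with an arbitrary $(p_t,p_x,q)\in{T_{\ttnn{hypo}\,v}(t,x,v(t,x))}^{+}$ and passing to the supremum over $u\in\bb B$ gives (ii.b) on $C''\times\ttnn{int}\,\Omega$. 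For (ii.a): near-optimal feasible trajectories keep, from any $(t_0,x_0,\beta)$ with $\beta\mageq v(t_0,x_0)$, the epigraph condition $(\xi(s),\beta-\int_{t_0}^{s}\ell(r,\xi(r),u(r))\,dr)\in\ttnn{epi}\,v(s,.)$, so $\ttnn{epi}\,v$ is a viable l.a.c.\ tube, and the measurable viability theorem of \cite{frankowskaplaskrze1995measviabth}, in the tube form used in \cite{bascofrank2019hjbinfhor}, then produces the normal-cone inequality (ii.a). Intersecting the three full-measure sets of times so obtained gives the common set $E$.

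\textit{The implication \ttnn{(ii)}$\Rightarrow$\ttnn{(i)}.} For $V\mineq v$, apply Lemma~\ref{lemma_parte_interna_2}, whose hypotheses are exactly \rif{hypo_W_abs_cont} (a standing assumption here) and (ii.b): it gives, for every feasible $(\xi,u)$ on $[\tau_0,\tau_1]$ with $\xi([\tau_0,\tau_1])\subset\ttnn{int}\,\Omega$ and $(\tau_0,\xi(\tau_0))\in\ttnn{dom}\,V$, the bound $V(\tau_0,\xi(\tau_0))\mineq\int_{\tau_0}^{\tau_1}\ell(r,\xi(r),u(r))\,dr+V(\tau_1,\xi(\tau_1))$. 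One removes the interiority restriction by the uniform neighbouring feasible trajectory theorem (Remark~\ref{remark_dopo_theo_viscosity}(a) and \cite{bascofrankowska2018lipschitz}, resting on O.P.C. and H.2.6), lets $\tau_1\ra+\infty$ while killing $V(\tau_1,\xi(\tau_1))$ via \rif{vanishing_cond} and using the integrability lower bound $\varphi$ of H.2.5 together with $D(t,x)=f(t,x,\bb B)$ (Theorem~\ref{theo_rep_H}(iv)$'$) to make the tail integral convergent, and finally infimizes over $\scr U_\Omega(t,x)$; this yields $V\mineq v$ on $\ttnn{dom}\,v$. For $V\mageq v$, one uses (ii.a) and the l.a.c.\ of $\ttnn{epi}\,V$: the same measurable viability theorem, applied from $(t,x,V(t,x))\in\ttnn{epi}\,V$ and combined with the feasibility-preserving construction furnished by O.P.C., produces $(\xi,u)\in\scr U_\Omega(t,x)$ with $V(s,\xi(s))\mineq V(t,x)-\int_{t}^{s}\ell(r,\xi(r),u(r))\,dr$ for all $s\mageq t$; letting $s\ra+\infty$ and using \rif{vanishing_cond} and H.2.5 as before gives $\int_{t}^{+\infty}\ell(r,\xi(r),u(r))\,dr\mineq V(t,x)$, hence $v(t,x)\mineq V(t,x)$.

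\textit{The main obstacle.} The genuine difficulty is the state constraint: the infinitesimal inequalities (ii.a)--(ii.b) hold in $\ttnn{int}\,\Omega$ (or near $\ttnn{bdr}\,\Omega$), and must be converted into estimates \textit{along admissible trajectories that may touch} $\ttnn{bdr}\,\Omega$. Bridging this gap is precisely the role of the uniform neighbouring feasible trajectory results, and it is there --- both in establishing the viability of $\ttnn{epi}\,v$ up to the boundary for \ttnn{(i)}$\Rightarrow$\ttnn{(ii)} and in the recovery of $V\mageq v$ for the converse --- that O.P.C. and H.2.6 enter decisively. A secondary, purely technical, point is that the dynamics are merely measurable in time, so one must work throughout with the measurable viability/invariance theory and the l.a.c.-tube framework in place of the classical continuous-in-time one.
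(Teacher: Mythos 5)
Your direction (i)$\Rightarrow$(ii) is essentially the paper's argument (Proposition \ref{v_uguale_inf_int_H_star}, the tangent/polar dictionary, and the cited uniqueness theorem for (ii.a)), but your direction (ii)$\Rightarrow$(i) has a genuine gap exactly at the point you yourself call ``the main obstacle''. For $V\mineq v$ you apply Lemma \ref{lemma_parte_interna_2} along (near-)optimal trajectories and propose to ``remove the interiority restriction by the uniform neighbouring feasible trajectory theorem''. But under O.P.C. the neighbouring feasible trajectory result recalled in Remark \ref{remark_dopo_theo_viscosity}-(a) is available only for the time-reversed dynamics $F(s,x)=-f(T-s,x,\bb B)$: it approximates a boundary-touching arc by interior arcs whose \emph{terminal} point is prescribed and whose initial points merely converge. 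A forward approximation with the initial point fixed, which is what your inequality $V(\tau_0,x)\mineq\int_{\tau_0}^{\tau_1}\ell+V(\tau_1,\xi(\tau_1))$ implicitly requires, is not furnished by O.P.C.; and since $V$ is only lower semicontinuous you cannot pass to the limit in the endpoint term anyway: $\liminf_i V(\tau_1,\xi_i(\tau_1))\mageq V(\tau_1,\xi(\tau_1))$ is the wrong direction, and nothing guarantees $\xi_i(\tau_1)\in \ttnn{dom}\,V(\tau_1,.)$, so the vanishing condition \rif{vanishing_cond} cannot be used to kill $V(\tau_1,\xi_i(\tau_1))$ either. The same difficulty reappears in your $V\mageq v$ half, where extracting from (ii.a) a \emph{global} feasible pair $(\xi,u)\in\scr U_\Omega(t,x)$ along which $V$ decreases by the running cost is precisely the nontrivial content of the uniqueness theorem you are implicitly re-proving, not a direct consequence of \cite[Theorem 2.9]{frankowskaplaskrze1995measviabth}.

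The paper proceeds differently and avoids both issues: assuming (ii), it applies \cite[Theorem 2.9]{frankowskaplaskrze1995measviabth} to the reversed augmented dynamics $-G(j-s,\xi)$ to build, for a.e.\ $t_0$ and every $u_0\in\bb B$, a short \emph{backward} interior trajectory ending at $(t_0,x_0)$ with prescribed terminal velocity, and then uses Lemma \ref{lemma_parte_interna_2} along it to convert (ii.b) into the interior contingent-epiderivative inequality \rif{ii-a'}, $D_\uparrow V(t,x)(-1,-f(t,x,u))\mineq\ell(t,x,u)$; this only exploits lower semicontinuity of $V$ at the fixed point $(t_0,x_0)$, which is the correct direction. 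The comparison itself --- including the passage across $\ttnn{bdr}\,\Omega$ and the construction of trajectories from the epigraph condition (ii.a) --- is then delegated to \cite[Theorem 3.3]{bascofrank2019hjbinfhor} and its proof, together with \rif{vanishing_cond}, rather than redone. Note also that the paper needs, and proves via Theorem \ref{theo_rep_H}-(v)$'$ and a measurable selection, the identification of $v$ with the calculus-of-variations value; your sketch does not address this step. If you insist on a self-contained argument along your lines, you must in effect re-prove that theorem, and in particular orient the approximation the other way (perturb the initial point, keep the endpoint) so that lower semicontinuity works for you rather than against you.
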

	
	\begin{proof}
		Notice that, by the definition of locally absolutely continuous set-valued map, the hypograph of $V(t,.)$ restricted to $\ttnn{dom}\,V(t,.)$ is closed. To show the equivalence between statements (i) and (ii), we use the following claim: for any $(t,x)\in \bb R^+\times \bb R^n$ with $\scr U_\Omega(t,x)\neq \emptyset$, $v(t,x)$ is equal to the following infimum
		\vspace{-2mm}
		\begin{align*}
		\ttnn{(CV)}\quad 
		\begin{cases}
		\ttnn{$\inf \; \int_t^{+\infty} H^*(s,\xi\rs,\xi'\rs)\,ds  $ over all $\xi\in \scr  W^{1,1}_{loc}(t,+\infty;\bb R^n)$ }\\
		\ttnn{such that $\xi(t)=x$ and $\, \xi\ccd\subset \Omega.$}
		\end{cases}
		\end{align*}		
		Indeed, let $(t,x)\in \bb R^+\times \bb R^n$ such that $\scr U_\Omega(t,x)\neq \emptyset$ and denote by $\alpha(t,x)\in \bb R\cup \graffe{\pm\infty}$ the infimum in (CV) above. From assumption H.2.5 we have that $\alpha(t,x)\neq -\infty.$ If $\alpha(t,x)=+\infty$ then $\alpha(t,x)\mageq v(t,x)$. Assume $\alpha(t,x)\in \bb R$. Fix $\eps>0$ and consider $\xi\in \scr  W^{1,1}_{loc}(t,+\infty;\bb R^n)$ with $\xi(t)=x$ and $\, \xi\ccd\subset \Omega$ satisfying $\int_t^{+\infty} H^*(s,\xi\rs,\xi'\rs)\,ds <\alpha(t,x)+\eps $. We have that $(\xi'\rs,u'\rs)\in \ttnn{graph }H^*(s,\xi\rs,.)$ for a.e. $s\mageq t$, where we put $u\rs:=\int_t^{s} H^*(\tau,\xi(\tau),\xi'(\tau))\,d \tau$ for all $s\mageq t$. Applying now the representation Theorem \ref{theo_rep_H}-(v)$'$ and the measurable selection theorem, we have that there exists a measurable function $w:[t,+\infty)\ra \bb B$ such that $(\xi'\rs,u'\rs)=(f(s,\xi\rs,w\rs),\ell(s,\xi\rs,w\rs))$ for a.e. $s\mageq t$. We get	
		\eee{
			\int_t^{+\infty} H^*(s,\xi\rs,\xi'\rs)\,ds=\int_t^{+\infty} u'\rs\,ds =\int_t^{+\infty} \ell(s,\xi\rs,w\rs)\,ds\mageq v(t,x).
		}
		So, $\alpha(t,x)+\eps > v(t,x)$. Since $\eps$ is arbitrary, we deduce that $\alpha(t,x)\mageq v(t,x)$. Arguing in analogous way as above and using \rif{H_star_min_ell}, we get $\alpha(t,x)\mineq v(t,x)$.
		
		Next, we show (i)$\Longleftrightarrow$(ii).	Assume $(i)$. Applying Lemma \ref{v_uguale_inf_int_H_star} and the claim, we have that $t\rightsquigarrow \ttnn{epi }v(t,.)$ is l.a.c. for any $x\in \Omega$. Now, from Proposition \ref{v_uguale_inf_int_H_star}-(iv) and the claim, we can find a subset $C\subset (0,+\infty)$, with $\mu(C)=0$, such that for any $(t,x)\in ((0,+\infty)\backslash C)\times \ttnn{int}\,\Omega$ we have $-\ell(t,x,u)\mineq D_\downarrow v(t,x)(1,f(t,x,u))$ for all $u\in \bb B$. Hence, from \cite[Proposition 6.1.4]{aubin2009set},
		we get
		
		$
			(1,f(t,x,u),-\ell(t,x,u))\in T_{\ttnn{hypo}\,v}(t,x,v(t,x))
		$
		
		\noindent for any $u\in \bb B$. Then we get
		
		$
			-p_t+\sup_{u\in \bb B}{\ps{f(t,x,u)}{-p_x} +q\ell(t,x,u)  }\mineq 0
		$
		
		$
			 \forall (p_t,p_x,q)\in {T_{\ttnn{hypo}\, v}(t,x,v(t,x))}^+.
		$
		
		\noindent Hence, statement (ii.b) holds. Using a similar argument and applying Lemma \ref{lemma_ipotesi_su_L} and \cite[Theorem 3.3]{bascofrank2019hjbinfhor}, we get (ii.a). Thus, (ii) follows.
		
		Now, assume $\ttnn{(ii)}$. From condition \rif{vanishing_cond} and \cite[Theorem 3.3]{bascofrank2019hjbinfhor} and its proof, it is just sufficient to show the following: there exists $ C\subset (0,+\infty)$, with $\mu((0,+\infty)\backslash C)=0$, such that
		\equazioneref{ii-a'}{
			&\forall (t,x)\in \ttnn{dom}\,V\cap(C \times \ttnn{int }\Omega),\, \forall \, u\in \bb B,\\
			& D_\uparrow V(t,x)(-1,-f(t,x, u))\mineq \ell(t,x, u).
		} 
		Recalling the definition of $G(.,.)$ given in \rif{def_G}, applying Lemma \ref{lemma_ipotesi_su_L} and  \cite[Theorem 2.9]{frankowskaplaskrze1995measviabth} to the set-valued maps $[0,j]\times \bb R^n\times \bb R\ni (s,\xi,\beta)\rightsquigarrow -G(j-s,\xi)\in \bb R^n\times \bb R$ where $j\in \bb N$, and from the measurable selection theorem, we can find a family of subsets $C_j'\subset(0,j)$, with $\mu( C_j')=0$ for all $j\in \bb N$, such that for any $(t_0,x_0)\in ((0,+\infty)\backslash \cup_{j\in \bb N}C_j')\times \ttnn{int}\,\Omega$ and any $u_0\in \bb B$, there exists $t_1\in (0,t_0)$ and a trajectory-control pair $((\xi,\beta),(u,r))\ccd$ satisfying
		\sistemanoref{
			(\xi ,\beta )'\rt=(f(t,\xi\rt,u\rt),-\ell(t,\xi\rt,u\rt)-r\rt) &t\in [t_1,t_0]\ttnn{ a.e.}\\
			(u,r)\rt\in \bb B\times [0,c\rt(1+\modulo{\xi\rt})-\ell(t,\xi\rt,u\rt)] &t\in [t_1,t_0]\ttnn{ a.e.}\\
			\xi([t_1,t_0])\subset \ttnn{int}\,\Omega,
		}
		with initial condition and final velocity
		
		$
			(\xi,\beta)(t_0)=(x_0,0),\quad 
			(\xi,\beta)'(t_0)=(f(t_0,x_0,u_0),-\ell(t_0,x_0,u_0)).
		$
			
		\noindent Hence, applying Lemma \ref{lemma_parte_interna_2} and taking a sequence $s_i\in (t_1,t_0)$ with $s_i\ra t_0-$, we get $V(s_i,\xi(s_i))-\int_{s_i}^{t_0}\ell(s,\xi\rs,u\rs)\,ds\mineq V(t_0,x(t_0))$ for all $ i\in \bb N$.
		So,
		\eee{
			V(s_i,\xi(s_i))-V(t_0,x_0)\mineq \int_{s_i}^{t_0}\ell(s,\xi\rs,u\rs)\,ds \mineq \beta(s_i)\quad \forall i\in \bb N.
		}
		Dividing by $t_0-s_i$ and passing to the lower limit as $i\ra\infty$, we get \rif{ii-a'} with $C=(0,+\infty)\backslash \cup_{j\in \bb N}C_j'$, and the proof is complete.
	\qed\end{proof}

	\noindent {\it Proof of Theorem \ref{main_cor_viscosity}}
		Let $V:\bb R^+ \times \Omega\ra \bb R$ be a locally Lipschitz continuous function and $(t,x)\in \bb R^+\times \Omega$. Notice that, from the locally Lipschitz continuity of $V$, the following set-valued maps $	t\rightsquigarrow \ttnn{epi}\,V(t,.)$ and $  	t\rightsquigarrow \ttnn{hypo}\,V(t,.)$ are locally absolutely continuous and, since $\partial_-V (t,x)$, $\partial_+V (t,x)$ are non-empty closed sets, it is straightforward to see that $\cup_{\lambda \mageq 0}\,\lambda (\partial_-V (t,x),-1)$, $\cup_{\lambda \mageq 0}\,\lambda (\partial_+V (t,x),-1)$ are closed too.  Now, we claim that:
		\equazioneref{sub_diff_and_polar_cones}{
			\cup_{\lambda \mageq 0}\,\lambda (\partial_-V (t,x),-1)= T_{\ttnn{epi}\, V}(t,x,V(t,x))^-.
		}
		Indeed, from the following well known relation (cfr. \cite[Chapter 6.4]{aubin2009set})
		\equazioneref{con_cone_epi_2}{
			&\zeta\in \partial_-V (t,x)\Longleftrightarrow    (\zeta,-1)\in T_{\ttnn{epi}\, V }(t,x,V(t,x))^-,
		}
		it follows that $	\cup_{\lambda \mageq 0}\,\lambda (\partial_-V (t,x),-1)\subset T_{\ttnn{epi}\, V}(t,x,V(t,x))^-$. On the other hand, let $(\zeta,q)\in T_{\ttnn{epi}\, V}(t,x,V(t,x))^-$. Since $(0,\delta)\in T_{\ttnn{epi}\, V}(t,x,V(t,x))^-$ for all $\delta\mageq 0$,   we have $q\mineq 0$. If $q<0$, $(\zeta/|q|,-1)\in  T_{\ttnn{epi}\, V}(t,x,V(t,x))^-$ and, applying \rif{con_cone_epi_2}, $\frac{\zeta}{|q|}\in \partial_-V (t,x)$. So, $(\zeta,q)\in \cup_{\lambda \mageq 0}\,\lambda (\partial_-V (t,x),-1))$. If $q=0$, consider $\bar \zeta\in  \partial_-V (t,x)$. Then $(\bar \zeta,-1)\in  T_{\ttnn{epi}\, V}(t,x,V(t,x))^-$, and, from the convexity of the polar cone, $(r\bar \zeta+(1-r)\zeta,-r)\in T_{\ttnn{epi}\, V}(t,x,V(t,x))^-$ for all $0<r<1$. Arguing as above, we conclude that $(r\bar \zeta+(1-r)\zeta,-r)\in \cup_{\lambda \mageq 0}\,\lambda (\partial_+V (t,x),-1)$, and the claim \rif{sub_diff_and_polar_cones} follows. Using the same argument as above, we have also
		\equazioneref{sub_diff_and_polar_cones_2}{
			\cup_{\lambda \mageq 0}\,\lambda (\partial_+V (t,x),-1)= T_{\ttnn{hypo}\, V}(t,x,V(t,x))^+.
		}

		Finally, from \rif{sub_diff_and_polar_cones}, \rif{sub_diff_and_polar_cones_2}, Proposition \ref{proposition1lipschitz}, and the representation Theorem \ref{theo_rep_H}, the conclusion follows.
	\qed

	\bibliographystyle{plain}
	\bibliography{BIBLIO_VB_rep_hamil}

\begin{thebibliography}{10}

\bibitem{aubin2009set}
J.-P. Aubin and H.~Frankowska.
\newblock {\em Set-valued analysis}.
\newblock Modern Birkh\"auser Classics. Birkh\"auser Boston, Inc., Boston, MA,
  2009.

\bibitem{bardicapuzzodolcetta1997hjbeviscosity}
M.~Bardi and I.~Capuzzo-Dolcetta.
\newblock {\em Optimal control and viscosity solutions of
  {H}amilton-{J}acobi-{B}ellman equations}.
\newblock Systems \& Control: Foundations \& Applications. Birkh\"auser Boston,
  Inc., Boston, MA, 1997.

\bibitem{barles1984existenceresults}
G.~Barles.
\newblock Existence results for first-order {H}amilton-{J}acobi equations.
\newblock {\em Ann. Inst. H. Poincar\'e Anal. Non Lin\'eaire}, 1(5):325--340,
  1984.

\bibitem{barronjensen1991optimalcontrolsemicontinuous}
E.~N. Barron and R.~Jensen.
\newblock Optimal control and semicontinuous viscosity solutions.
\newblock {\em Proc. Amer. Math. Soc.}, 113(2):397--402, 1991.

\bibitem{bascofrankowska2018lipschitz}
V.~Basco and H.~Frankowska.
\newblock Lipschitz continuity of the value function for the infinite horizon
  optimal control problem under state constraints.
\newblock In F.~Alabau-Boussouira et~al., editor, {\em Trends in Control Theory
  and Partial Differential Equations}, volume~32 of {\em Springer INdAM
  Series}, pages 15 -- 52. Springer International Publishing, 1985.

\bibitem{bascofrank2019hjbinfhor}
V.~Basco and H.~Frankowska.
\newblock Hamilton-{J}acobi-{B}ellman equations for infinite horizon control
  problems under state constraints with time-measurable data.
\newblock {\em NoDEA-Nonlinear Differential Equations and Applications},
  26(1):7, 2019.

\bibitem{crandalllionsevans1984someproperties}
M.~G. Crandall, L.~C. Evans, and P.-L. Lions.
\newblock Some properties of viscosity solutions of {H}amilton-{J}acobi
  equations.
\newblock {\em Trans. Amer. Math. Soc.}, 282(2):487--502, 1984.

\bibitem{frankowska1993lowersemicontinuous}
H.~Frankowska.
\newblock Lower semicontinuous solutions of {H}amilton-{J}acobi-{B}ellman
  equations.
\newblock {\em SIAM J. Control Optim.}, 31(1):257--272, 1993.

\bibitem{frankowskaplaskrze1995measviabth}
H.~Frankowska, S.~Plaskacz, and T.~Rze\.zuchowski.
\newblock Measurable viability theorems and the {H}amilton-{J}acobi-{B}ellman
  equation.
\newblock {\em J. Differential Equations}, 116(2):265--305, 1995.

\bibitem{frankowskasedrak2014stablerephami}
H.~Frankowska and H.~Sedrakyan.
\newblock Stable representation of convex hamiltonians.
\newblock {\em Nonlinear Analysis: Theory, Methods \& Applications}, 100:30 --
  42, 2014.

\bibitem{ishii1985hbjediscontinuous}
H.~Ishii.
\newblock Hamilton-{J}acobi equations with discontinuous {H}amiltonians on
  arbitrary open sets.
\newblock {\em Bull. Fac. Sci. Engrg. Chuo Univ.}, 28:33--77, 1985.

\bibitem{ishii1985representationconvex}
H.~Ishii.
\newblock On representation of solutions of {H}amilton-{J}acobi equations with
  convex hamiltonians.
\newblock In K.~Masuda and M.~Mimura, editors, {\em Recent Topics in Nonlinear
  PDE II}, volume 128 of {\em North-Holland Mathematics Studies}, pages 15 --
  52. North-Holland, 1985.

\bibitem{ishii1988representation}
H.~Ishii.
\newblock Representation of solutions of {H}amilton-{J}acobi equations.
\newblock {\em Nonlinear Analysis: Theory, Methods \& Applications}, 12(2):121
  -- 146, 1988.

\bibitem{ishii1992perron}
H.~Ishii.
\newblock Perron's method for monotone systems of second-order elliptic partial
  differential equations.
\newblock {\em Differential Integral Equations}, 5(1):1--24, 1992.

\bibitem{ishiikoike1996newformulationstateconstrained}
H.~Ishii and S.~Koike.
\newblock A new formulation of state constraint problems for first-order
  {PDE}s.
\newblock {\em SIAM J. Control Optim.}, 34(2):554--571, 1996.

\bibitem{lios1982generalizedsolutions}
P.-L. Lions.
\newblock {\em Generalized solutions of {H}amilton-{J}acobi equations},
  volume~69 of {\em Research Notes in Mathematics}.
\newblock Pitman, 1982.

\bibitem{misztela2019repre}
A.~Misztela.
\newblock Representation of {H}amilton--{J}acobi equation in optimal control
  theory with compact control set.
\newblock {\em SIAM Journal on Control and Optimization}, 57(1):53--77, 2019.

\bibitem{rampazzo2005faithful}
F.~Rampazzo.
\newblock Faithful representations for convex {H}amilton-{J}acobi equations.
\newblock {\em SIAM J. Control Optim.}, 44(3):867--884, 2005.

\bibitem{rockafellar1981proximal}
R.~T. Rockafellar.
\newblock Proximal subgradients, marginal values, and augmented {L}agrangians
  in nonconvex optimization.
\newblock {\em Math. Oper. Res.}, 6(3):424--436, 1981.

\bibitem{rockafellar2009variational}
R.~T. Rockafellar and R.~J.~B. Wets.
\newblock {\em Variational analysis}, volume 317 of {\em Grundlehren der
  Mathematischen Wissenschaften}.
\newblock Springer-Verlag, Berlin, 1998.

\bibitem{Soner86a}
H.~M. Soner.
\newblock Optimal control problems with state-space constraints {I}.
\newblock {\em SIAM J. Control Optim.}, 24:552--562, 1986.

\bibitem{souganidis1985existence}
P.~E. Souganidis.
\newblock Existence of viscosity solutions of {H}amilton-{J}acobi equations.
\newblock {\em J. Differential Equations}, 56(3):345--390, 1985.

\bibitem{vinter00}
R.~B. Vinter.
\newblock {\em Optimal Control}.
\newblock Birkh\"auser, Boston, MA, 2000.

\end{thebibliography}

\end{document}